\newcommand{\NN}{\mathbb N}
\newcommand{\RR}{\mathbb R}
\newcommand{\kk}{\Bbbk}
\newcommand{\DD}{\mathscr{D}}
\newcommand{\im}{\mathop{\mathrm{im}}}
\newcommand{\rann}{\text{r.ann}_{A_2}}
\DeclareMathOperator{\Span}{span}
\newcommand{\NX}{\widetilde{X}}
\newcommand{\dx}{\partial_{x}}
\newcommand{\dy}{\partial_{y}}
\DeclareMathOperator{\Spec}{Spec}
\newcommand{\If}{\mathbb{I}_{A_2}(fA_2)}
\theoremstyle{plain}
\newtheorem{theorem}{Theorem}[section]
\newtheorem{proposition}[theorem]{Proposition}
\newtheorem{corollary}[theorem]{Corollary}
\newtheorem{lemma}[theorem]{Lemma}
\newtheorem{question}[theorem]{Question}
\theoremstyle{definition}
\newtheorem{Definition}[theorem]{Definition}
\newtheorem{notation}[theorem]{Notation}
\newtheorem{example}[theorem]{Example}
\DeclareMathOperator{\Hom}{Hom}
\DeclareMathOperator{\End}{End}
\DeclareMathOperator{\gdim}{gldim}
\DeclareMathOperator{\lt}{lt}
\DeclareMathOperator{\supp}{supp}
\DeclareMathOperator{\ann}{ann}
\DeclareMathOperator{\GKdim}{GKdim}
\DeclareMathOperator{\kdim}{Kdim}
\def\@cite#1#2{{\normalfont[{#1}\if@tempswa , #2\fi]}} \makeatother
\begin{document}
\title{Idealizers in the Second Weyl Algebra}
\author{Ruth Reynolds}
\date{\today}
\thanks{The University of Edinburgh, James Clerk Maxwell Building, Peter Guthrie Tait Road, Edinburgh EH9 3FD.
 { \tt r.a.e.reynolds@sms.ed.ac.uk}}
\maketitle
\begin{abstract}
    Given a right ideal $I$ in a ring $R$, the idealizer of $I$ in $R$ is the largest subring of $R$ in which $I$ becomes a two-sided ideal. In this paper we consider idealizers in the second Weyl algebra $A_2$, which is the ring of differential operators on $\kk[x,y]$ (in characteristic $0$). Specifically, let $f$ be a polynomial in $x$ and $y$ which defines an irreducible curve whose singularities are all cusps. We show that the idealizer of the right ideal $fA_2$ in $A_2$ is always left and right noetherian, extending the work of McCaffrey.
\end{abstract}
\tableofcontents
\section{Introduction}
Let $\kk$ be an algebraically closed field of characteristic $0$. Throughout this paper, by `a variety' we mean `an irreducible affine algebraic variety over $\kk$' and by `a curve' we mean an `irreducible affine algebraic curve over $\kk$'. We denote the ring of regular functions on a variety $X$ by $\mathcal{O}(X)$. In this paper we are interested in idealizers in rings of differential operators. In particular we study $\If\subseteq A_2$ where $f\in \kk[x,y]$ defines a curve $X$ such that the normalisation map $\phi: \widetilde{X}\to X$ is injective.

The noetherianity of idealizer subrings in Weyl algebras and more general rings of differential operators has been studied before. Robson was the first to study idealizer subrings in $A_1$, the first Weyl algebra. Theorem \ref{A_1} is a special case of a general result about idealizers in HNP rings, of which $A_1$ is an example.
\begin{theorem}\emph{\cite[Theorem $7.4$]{Robson1972}}\label{A_1}
Let $I\leq_rA_1$ be nonzero. Then $\mathbb{I}_{A_1}(I)$ is a left and right noetherian ring.
\end{theorem}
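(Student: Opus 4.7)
The plan is to exploit the fact that $A_1$ is a hereditary Noetherian prime (HNP) ring: a simple Noetherian domain of Krull dimension and global dimension one. Setting $S = \mathbb{I}_{A_1}(I)$, so that $I$ is by construction a two-sided ideal of $S$, the strategy is to control right and left ideals of $S$ by comparing them with ideals of the ambient ring $A_1$, using the embedding of rings $S/I \hookrightarrow A_1/I$ as a bridge.

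First I would establish that $A_1/I$ has finite length as a right $A_1$-module: since $A_1$ is a simple domain, any nonzero right ideal $I$ is essential, so $A_1/I$ is a torsion module, and because $A_1$ has Krull dimension one every such torsion module is Artinian. An induction on this length, using short exact sequences from intermediate right ideals, reduces the problem to the case where $I$ is a maximal right ideal; in that case $A_1/I$ is a simple module, $D := \End_{A_1}(A_1/I)$ is a division ring by Schur's lemma, and $S/I$ embeds into $D$. The core technical step is then to show that $A_1$ is finitely generated as a right $S$-module. Here the hereditary property is essential: using projectivity of $I$ in the exact sequence $0\to I\to A_1\to A_1/I\to 0$, one lifts a $D$-spanning set of $A_1/I$ to elements of $A_1$ which generate $A_1$ over $S$. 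Once this is known, right noetherianity of $S$ follows by a standard argument: any ascending chain $\{J_n\}$ of right ideals of $S$ yields an ascending chain $\{J_n A_1\}$ in $A_1$ which stabilizes by noetherianity of $A_1$, and the remaining ``fiber'' information is controlled by the quotient $S/I$, which is Artinian in the maximal case.

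Left noetherianity is obtained by a symmetric argument applied to the left ideal $\sigma(I)$ obtained via the canonical anti-automorphism of $A_1$ (sending $x\mapsto x$ and $\partial_x\mapsto -\partial_x$); this transports the problem to right noetherianity of the right idealizer of a nonzero left ideal, which is handled by exactly the same strategy. The step I expect to be the main obstacle is the finite generation of $A_1$ as a right $S$-module: this is precisely where ``hereditary'' is needed rather than merely ``noetherian domain'', and where the full HNP structure of $A_1$ must be used delicately together with the essential cofiniteness of $I$ to construct the required lifts.
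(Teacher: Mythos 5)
The paper itself gives no proof of this statement --- it is quoted from Robson as a special case of his general theorem on idealizers at essential right ideals of HNP rings --- so your sketch has to stand on its own, and as written it has a genuine gap at its first structural move: the claimed reduction, by ``induction on the length of $A_1/I$ using short exact sequences from intermediate right ideals'', to the case where $I$ is a maximal right ideal. If $I\subsetneq J$ are right ideals of $A_1$, the subrings $\mathbb{I}_{A_1}(I)$ and $\mathbb{I}_{A_1}(J)$ are in general incomparable, and you offer no mechanism by which noetherianity of idealizers at larger ideals would yield noetherianity of $\mathbb{I}_{A_1}(I)$. This is exactly where the difficulty of Robson's theorem lies: his proof goes through basic and iterated idealizers at semimaximal right ideals, with further work for arbitrary essential ideals, not through a length induction on $I$. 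Note the contrast with the length induction the paper does perform (Proposition \ref{homsequivs}): there the idealized ideal stays fixed and the induction is over the target $J$ in $\Hom_{A_2}(A_2/fA_2,A_2/J)$, which is a different and legitimate d\'evissage.

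Two later steps are also not right as written. The ``core technical step'' --- lifting a $D$-spanning set of $A_1/I$ via projectivity of $I$ to show $A_1$ is a finitely generated right $S$-module --- presupposes that $A_1/I$ is finite dimensional over $D=\End_{A_1}(A_1/I)$, which is false in general: for $I=xA_1$ one has $A_1/I\cong\kk[\dx]$ and $D=\kk$. The conclusion is nevertheless true, for a simpler reason using simplicity rather than hereditariness: $A_1I$ is a nonzero two-sided ideal, so $A_1I=A_1$; writing $1=\sum r_ia_i$ with $a_i\in I\subseteq S$ gives $A_1=\sum r_iS$ since $a_ir\in I$ for all $r$. Finally, in the closing chain argument, stabilization of $\{J_nA_1\}$ is not enough; one must show the ``fibre'' modules $(J_nA_1\cap S)/J_n$ stabilize, i.e.\ that they are noetherian over $S$, and the assertion that this is ``controlled by $S/I$'' is precisely the unproven content. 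It amounts to the condition that modules such as $\Hom_{A_1}(A_1/I,A_1/J)$ be noetherian over $S$, which is Rogalski's criterion (Proposition \ref{homs}). For $A_1$ that criterion can in fact be verified directly --- $A_1/I$ and $A_1/J$ have finite length, and Hom spaces between finite-length (holonomic) $A_1$-modules are finite dimensional over $\kk$ by Quillen's lemma --- which, together with the transpose anti-automorphism for the left-handed statement, would give a complete and clean proof; but your sketch does not supply this step, and it, rather than finite generation of $A_1$ over $S$, is the real crux.
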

However, the higher Weyl algebras are no longer hereditary (in fact $\gdim A_n = n$) so alternative techniques must be used to study idealizers in these rings. Despite this, it turns out that idealizers at certain types of right ideal in higher Weyl algebras behave very well, as we see in the following theorem.
\begin{theorem}\emph{\cite[Proposition $2.3$]{Resco1980}}\label{Resco}
View $A_1\subseteq A_n$, the $n^{th}$ Weyl algebra. Let $I\leq_r A_1$ be a right ideal in the first Weyl algebra. Then $\mathbb{I}_{A_n}(IA_n)$ is right and left noetherian.
\end{theorem}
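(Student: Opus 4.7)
The plan is to reduce to Theorem~\ref{A_1} by exploiting the $\kk$-algebra tensor decomposition $A_n \cong A_1 \otimes_\kk A_{n-1}$, in which the ambient subring $A_1 \subseteq A_n$ corresponds to $A_1 \otimes 1$. Under this identification, $IA_n$ is realised as the right $A_n$-submodule $I \otimes_\kk A_{n-1}$.

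The first step, and the heart of the argument, is to establish the identity
\[
\mathbb{I}_{A_n}(IA_n) = \mathbb{I}_{A_1}(I) \otimes_\kk A_{n-1}.
\]
Writing $S := \mathbb{I}_{A_1}(I)$, the inclusion $\supseteq$ is immediate because $A_{n-1}$ commutes with $A_1$ in the tensor factorisation, so $(s \otimes b)(I \otimes A_{n-1}) \subseteq sI \otimes A_{n-1} \subseteq I \otimes A_{n-1}$ for every $s \in S$ and every $b \in A_{n-1}$. For $\subseteq$, I would fix a $\kk$-basis $\{b_j\}$ of $A_{n-1}$ and write $a = \sum_j a_j \otimes b_j$ for an element $a \in \mathbb{I}_{A_n}(IA_n)$; then for each $r \in I$, the element $ar = \sum_j (a_j r) \otimes b_j$ lies in $I \otimes A_{n-1}$, and the linear independence of the $b_j$ forces $a_j r \in I$ for every $j$. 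Hence each $a_j$ lies in $S$, as required.

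Given this identification, Theorem~\ref{A_1} gives that $S$ is left and right noetherian, and one can realise $A_{n-1}$ as an iterated Ore extension of $\kk$ in the generators $x_2, \partial_2, \ldots, x_n, \partial_n$. Performing the analogous Ore extensions over $S$ produces $S \otimes_\kk A_{n-1}$, since the new indeterminates commute with $S$ and the derivations act trivially on $S$. Because Ore extensions preserve both left and right noetherianity, the tensor product is noetherian on both sides. The main obstacle is the first step: once the idealizer has been identified explicitly via the tensor decomposition, everything else is a formal application of standard noetherianity-preservation results.
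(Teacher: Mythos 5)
Your proposal is correct and follows essentially the same route as the paper: identify $\mathbb{I}_{A_n}(IA_n)$ with $\mathbb{I}_{A_1}(I)\otimes_\kk A_{n-1}$, view this as an iterated Ore extension of $\mathbb{I}_{A_1}(I)$, and invoke Theorem \ref{A_1} together with preservation of noetherianity under Ore extensions. The only difference is that you spell out the basis argument for the idealizer identification, which the paper leaves as an observation.
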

\begin{proof}
We simply observe that 
\begin{align*}
    \mathbb{I}_{A_n}(IA_n) &\cong \mathbb{I}_{A_1}(I)\otimes_{\kk}A_{n-1}\\
    &\cong \mathbb{I}_{A_1}(I)[x_2,\dots,x_n][\partial_2;\partial/\partial x_2]\dots [\partial_n;\partial/\partial x_n]
\end{align*}
and hence, by Theorem \ref{A_1}, the noetherianity of $\mathbb{I}_{A_n}(IA_n)$ is induced from that of $\mathbb{I}_{A_1}(I)$ by viewing it as an Ore extension of $\mathbb{I}_{A_1}(I)$.
\end{proof}
The situation is not so nice for more complicated ideals; for instance we cannot use the same trick on a right ideal of the form $xA_2+yA_2$ where we view $A_2$ as $\kk[x,y][\dx,\dy]$. Indeed, Resco proved that the conclusion of the previous theorem does not hold for the idealizer associated to this ideal .
\begin{theorem}\emph{\cite[Theorem $2$]{Resco1983}}
The idealizer $\mathbb{I}_{A_2}(xA_2+yA_2)$ is right but not left noetherian.
\end{theorem}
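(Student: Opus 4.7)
The plan is to analyze $T := \mathbb{I}_{A_2}(I)$, with $I = xA_2 + yA_2$, through the structure of the right $A_2$-module $A_2/I$, and then handle the two halves of the theorem separately.

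First I would describe $A_2/I$. Using the PBW form $\partial_x^a \partial_y^b x^c y^d$, one computes that $A_2/I$ has $k$-basis $\{\overline{\partial_x^a \partial_y^b}\}_{a,b\ge 0}$, with right $A_2$-action $\overline p \cdot \partial_x = \overline{\partial_x p}$ and $\overline p \cdot x = \overline{\partial p/\partial(\partial_x)}$ (and analogues in $y, \partial_y$). Any nonzero element generates the whole module, so $A_2/I$ is simple; direct computation gives $\mathrm{End}_{A_2}(A_2/I) = k$, because an endomorphism $\phi$ is determined by $\phi(\overline 1)$, and $\phi(\overline 1)\cdot x = \phi(\overline 1)\cdot y = 0$ forces $\phi(\overline 1) \in k$. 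Since $T/I$ embeds into this endomorphism ring and contains the scalars, $T = I \oplus k$ as $k$-vector spaces.

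For right noetherianity, I would invoke a Robson-style idealizer theorem: whenever $R$ is right noetherian and $R/I$ is a cyclic simple right $R$-module, $\mathbb{I}_R(I)$ is right noetherian. A more hands-on verification uses the identity $\partial_x(xa) = a + x\partial_x a \equiv a \pmod{I}$, yielding $A_2 = T + \partial_x T$ as right $T$-module; combined with noetherianity of $A_2$, this descends to right noetherianity of $T$.

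For the failure of left noetherianity, the short exact sequence $0 \to I \to T \to k \to 0$ of left $T$-modules reduces the problem to showing $I$ is not a noetherian left $T$-module. The asymmetry is that $I$ is a right ideal but not a left submodule of $A_2$ (e.g.\ $\partial_x \cdot x = 1 + x\partial_x \notin I$), so the operators $\partial_x, \partial_x^2, \ldots$ that make the right side finitely generated are absent from $T$. I would construct a chain of left $T$-ideals
\[ J_n := \sum_{i=0}^{n} T \cdot \alpha_i \subseteq I \]
for a carefully chosen sequence $(\alpha_i) \subseteq I$ --- for instance $\alpha_i = x\partial_y^i$, or elements drawn from the second-syzygy module of $I$ --- and verify strict ascent by showing $\alpha_{n+1} \notin J_n$. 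The main obstacle is this verification: left multiplication by elements of $I = xA_2 + yA_2$ produces cascades of commutator-correction terms via $[\partial_x, x] = [\partial_y, y] = 1$, so ruling out a finite $T$-linear expression requires a filtration on $A_2$ (e.g.\ the Bernstein filtration) compatible with both the PBW basis and the left $T$-action, forcing the putative relation to reduce to an impossible polynomial identity.
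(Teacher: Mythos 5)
The paper does not prove this statement at all --- it is quoted verbatim from Resco \cite[Theorem 2]{Resco1983} as background --- so there is no in-paper argument to compare with; I can only judge your plan on its merits.

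The structural analysis is correct: $A_2/I$ (with $I=xA_2+yA_2$) is the simple right module $\kk[\partial_x,\partial_y]$ with $x,y$ acting as derivations, $\End_{A_2}(A_2/I)=\kk$, hence $T=\mathbb{I}_{A_2}(I)=\kk\oplus I$. The right-noetherian half is essentially fine, with two caveats. The Robson-type theorem you quote needs $I$ to be a \emph{generative} (semi)maximal right ideal; here that is automatic because $A_2$ is simple, but as stated your general principle omits the hypothesis. For the hands-on route, the identity $A_2=T+\partial_x T$ is correct ($a=\partial_x(xa)-x\partial_x a$ with $xa,\,x\partial_x a\in xA_2\subseteq T$), but the phrase ``combined with noetherianity of $A_2$, this descends to right noetherianity of $T$'' hides the real step: $A_2$ being right noetherian as a ring does not immediately make it a noetherian right $T$-module, and the descent is exactly the noncommutative Eakin--Nagata theorem of Formanek--Jategaonkar (or Robson's idealizer theorem again). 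Cite one of these; do not present the descent as formal.

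The genuine gap is the left half, which is the only part that distinguishes $A_2$ from $A_1$: you reduce to exhibiting a non-terminating chain $J_n=\sum_{i\le n}T\alpha_i$, but you leave the strict ascent unverified, hedge on the choice of $\alpha_i$, and propose a filtration argument you do not carry out. This is where all the content lies --- note the analogous naive plan must fail in $A_1$, since $\mathbb{I}_{A_1}(xA_1)=\kk+xA_1$ is left noetherian by Theorem \ref{A_1}, so any verification has to exploit a genuinely two-variable feature. Fortunately your candidate $\alpha_i=x\partial_y^i$ does work, and far more easily than your sketch suggests: since $x$ commutes with $\partial_y$ one has $x\partial_y^i=\partial_y^i x$, so $J_n=\bigl(\sum_{i\le n}T\partial_y^i\bigr)x$; if $x\partial_y^{n+1}\in J_n$ then cancelling $x$ on the right (legitimate, $A_2$ is a domain) gives $\partial_y^{n+1}\in\sum_{i\le n}\kk\,\partial_y^i+I$, because $T=\kk+I$ and $I$ is a right ideal. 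This contradicts $I\cap\kk[\partial_x,\partial_y]=0$, which is the same computation (apply operators to $x^my^n$ and evaluate at the origin) that shows the $\overline{\partial_x^a\partial_y^b}$ form a basis of $A_2/I$. With that verification inserted, and the Eakin--Nagata/Robson citation made explicit, your outline becomes a complete proof; as submitted, the decisive step is missing.
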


On the other hand, Theorem \ref{Resco} shows that $\If$ is right and left noetherian when $f$ is of the form $ax+by+c$ for some $a,b,c \in \kk$. Hence, it is natural to ask the following question.
\begin{question}
For which $f\in \kk[x,y]$ is $\If$ right and left noetherian?
\end{question}
McCaffrey \cite{Mccaffrey} studied these idealizers in the case where $f$ defines a nonsingular curve and obtained the following theorem which we paraphrase.

\begin{theorem}\emph{\cite{Mccaffrey}}
Let $f\in \kk[x,y]$ define a nonsingular curve. Then the idealizer ring $\If$ is right noetherian.
\end{theorem}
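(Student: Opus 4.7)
The plan is to apply the standard noetherianity criterion for idealizers: since $A_2$ is right noetherian, the ring $\If$ is right noetherian if and only if $M := A_2/fA_2$ is a noetherian right $\If$-module. The body of the proof is then devoted to verifying this latter condition, using the nonsingularity of $X = V(f)$ in an essential way.

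My first step would be to identify $\If/fA_2$ with the ring $D(\mathcal{O}(X))$ of differential operators on the smooth affine curve $X$. Since any $D \in \If$ satisfies $Df = fE$ for some $E \in A_2$, applying this as operators on $g \in \kk[x,y]$ gives $D(fg) = (Df)(g) = fE(g) \in f\kk[x,y]$, so $D$ preserves the ideal $(f)$ and hence induces a differential operator $\bar D$ on $\mathcal{O}(X) = \kk[x,y]/(f)$. The resulting ring map $\If \to D(\mathcal{O}(X))$ has kernel exactly $fA_2$, and the Jacobian condition $(f_x, f_y) = \mathcal{O}(X)$ coming from smoothness is precisely what is needed to lift every differential operator on $X$ back to an element of $\If$, giving surjectivity. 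Thus $\If/fA_2 \cong D(\mathcal{O}(X))$, and since $X$ is a smooth affine curve, $D(\mathcal{O}(X))$ is well-known to be noetherian.

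The remaining step is to show that $M$ is noetherian as a right $\If$-module. Using the decomposition $A_2 = \bigoplus_{i,j \geq 0} \kk[x,y]\cdot\dx^i\dy^j$, one has $M = \bigoplus_{i,j \geq 0} \mathcal{O}(X)\cdot[\dx^i\dy^j]$ as a left $\mathcal{O}(X)$-module. The key computation is
\[
[\dx^i\dy^j]\cdot f \equiv if_x[\dx^{i-1}\dy^j] + jf_y[\dx^i\dy^{j-1}] \pmod{\text{lower-order terms}},
\]
derived from $\dx\cdot f = f\dx + f_x$ and $\dy\cdot f = f\dy + f_y$, and it shows that every element of $M$ is annihilated on the right by some power of $f$. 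I would then construct an ascending filtration $M_0 \subseteq M_1 \subseteq \cdots$ of $M$ by right $\If$-submodules with $\bigcup_n M_n = M$, arranged so that the right action of $f$ is trivial on each quotient $M_n/M_{n-1}$; the $\If$-action then factors through $\If/fA_2 \cong D(\mathcal{O}(X))$, and using nonsingularity one shows each quotient is finitely generated as a $D(\mathcal{O}(X))$-module, hence noetherian. A d\'evissage then propagates noetherianity through $M$, and the idealizer criterion concludes that $\If$ is right noetherian.

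The main obstacle is constructing the filtration so that it is simultaneously $\If$-stable and has finitely generated quotients over $D(\mathcal{O}(X))$. The naive choice $M_n := \{m \in M : mf^{n+1} = 0\}$ fails $\If$-stability in general: for $f = y$ and $h = y\dy^2 \in \If$ one can check that $m = [\dy]$ satisfies $mf^2 = 0$ but $mh = [\dy^2]$ only satisfies $(mh)f^3 = 0$. The root cause is that $h \in \If$ forces $hf \in fA_2$ but not automatically $hf^k \in f^kA_2$ for $k \geq 2$. Either a more refined filtration or a careful inductive argument will be required, and in either case the Jacobian condition $(f_x,f_y) = \mathcal{O}(X)$ should be the central tool controlling the leading-order behavior of right multiplication by $f$ on each graded piece.
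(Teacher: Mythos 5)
The heart of your plan---proving that $M=A_2/fA_2$ is a noetherian right $\If$-module via an exhaustive filtration with $\DD(\mathcal{O}(X))$-module quotients---is both missing and, as designed, cannot close. You never produce an $\If$-stable filtration: you show yourself that the natural choice $M_n=\{m\mid mf^{n+1}=0\}$ fails, and the only canonical alternative, $M_n=\{m\mid m\,(fA_2)^n=0\}$ for the ideal $fA_2\lhd\If$, collapses because $A_2$ is simple, so $(fA_2)^2=f(A_2fA_2)=fA_2$ and the chain stabilises at $\{m\mid m\,fA_2=0\}=\If/fA_2\cong\DD(X)\subsetneqq M$. More fundamentally, even if you had an infinite exhaustive chain $M_0\subseteq M_1\subseteq\cdots$ of $\If$-submodules with every quotient finitely generated (hence noetherian) over $\DD(X)$, d\'evissage would only give that each $M_n$ is noetherian; an infinite strictly ascending chain that does not stabilise is exactly a failure of noetherianity for $M$ (compare an infinite direct sum of simple modules, which has such a filtration). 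So the statement you reduce to, ``$M_{\If}$ is noetherian,'' is left entirely unproved, and it is a stronger assertion than what is needed: by Proposition \ref{homs} it suffices that $\Hom_{A_2}(A_2/fA_2,A_2/J)\cong (J:fA_2)/J$ be a finitely generated module over $D=\If/fA_2$ for each right ideal $J\supseteq fA_2$ (your sufficient condition implies this, since each such Hom is a submodule of a quotient of $M$, but the converse ``only if'' you assert is neither proved nor used). A smaller inaccuracy: surjectivity of $\If\to\DD(\mathcal{O}(X))$ with kernel $fA_2$ does not require smoothness of $X$---this is Proposition \ref{idealizer-diffops}, valid for any subvariety of a nonsingular variety---so the Jacobian condition is not where nonsingularity actually enters.

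For comparison, the statement you are proving is McCaffrey's theorem, which this paper cites rather than reproves; its own machinery (Theorem \ref{fdeets}, which covers the nonsingular case since then the normalisation is an isomorphism) runs quite differently. Via Proposition \ref{homsequivs} (using $1$-criticality of $A_2/fA_2$) one reduces to maximal right ideals $J\supsetneqq fA_2$; one then splits according to whether $A_2/J$ is $\kk[x]$- and $\kk[y]$-torsion or torsionfree. In the torsionfree case one localises at a suitable $g\in\kk[y]$, uses the Morita equivalence between $\DD(X)$ and $\DD(\NX)$ and Bernstein's preservation of holonomicity (Theorem \ref{Bern}) to get finite generation (Proposition \ref{TFFG}); in the torsion case one shows $J=(x-a,y-b)A_2$ (Proposition \ref{STPstar}) and proves finite generation of $((x,y)A_2:fA_2)/(x,y)A_2$ by an explicit linear-growth estimate over the strongly filtered simple ring $D$ (Theorem \ref{glgfg} and Proposition \ref{torsionproved}). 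Your preliminary observations (the identification $\If/fA_2\cong\DD(X)$ and the local nilpotence of right multiplication by $f$ on $M$) are correct, but without a genuinely new idea replacing the failed filtration, the proposal does not constitute a proof.
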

In this paper we strengthen and generalise this result to a class of singular curves.
\begin{theorem}\label{fdeets}
Let $X = \mathbb{V}(f)$ be a plane curve such that the normalisation map $\phi:\NX \to X$ is injective. Then $\If$ is right and left noetherian.
\end{theorem}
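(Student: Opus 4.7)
The plan is to invoke standard idealizer machinery. Write $T = \If$ and $I = fA_2$; then $I$ is a two-sided ideal of $T$ and $T/I \cong \End_{A_2}(A_2/fA_2)^{\mathrm{op}}$. By the standard Robson-type criteria, right noetherianity of $T$ reduces to right noetherianity of $T/I$, while left noetherianity of $T$ additionally requires that $A_2/I$ be finitely generated as a left $T/I$-module. The proof therefore splits into (i) identifying $T/I$ well enough to verify two-sided noetherianity, and (ii) controlling $A_2/I$ as a module over $T/I$.

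For step (i), I would interpret $T/I$ geometrically. The right $A_2$-module $A_2/fA_2$ is closely related (via side-switching of $D$-modules) to $\mathcal{O}(X)$ for $X = \mathbb{V}(f)$, so $T/I$ should be expressible in terms of differential operators attached to $X$. Injectivity of $\phi : \NX \to X$ means that $X$ is unibranch at every point, so $\mathcal{O}(X) \hookrightarrow \mathcal{O}(\NX)$ with conductor supported at the singular locus. Using known results on differential operators on such curves (e.g. in the spirit of Smith--Stafford), $\mathcal{D}(X)$ sits as a noetherian subring of the noetherian domain $\mathcal{D}(\NX)$. I would identify $T/I$ with a suitable such subring of $\mathcal{D}(\NX)$ and check two-sided noetherianity there.

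For step (ii), I would equip $A_2/fA_2$ with the filtration induced by the Bernstein filtration on $A_2$ and study the associated graded, which sits on conormal-type geometry over $X \subset \mathbb{A}^2$; the unibranch hypothesis should then give enough control at the singular points to extract finite generation over $T/I$.

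The main obstacle is step (ii). In McCaffrey's smooth case one has $\mathcal{O}(X) = \mathcal{O}(\NX)$ and $\mathcal{D}(X)$ is Morita equivalent to $\mathcal{O}(X)$, which makes finite generation transparent; for cusps, $\mathcal{D}(X)$ is strictly smaller than $\mathcal{D}(\NX)$ and one must track the conductor ideal carefully at each singular point. Injectivity of $\phi$ (as opposed to mere birationality) is exactly what makes these conductor calculations close up cleanly --- the failure of the unibranch condition is what causes pathologies like the non-left-noetherianity of $\mathbb{I}_{A_2}(xA_2+yA_2)$ exhibited by Resco.
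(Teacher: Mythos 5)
The fatal step is the claim that ``right noetherianity of $T=\If$ reduces to right noetherianity of $T/I$.'' No such reduction exists for this ideal. The correct criterion (Robson/Rogalski, Proposition \ref{homs} in the paper) is that $\If$ is right noetherian if and only if $\Hom_{A_2}(A_2/fA_2,\,A_2/J)$ is a noetherian right $T/I$-module for \emph{every} right ideal $J\supseteq fA_2$; your condition only captures the single case $J=fA_2$, where the Hom is $T/I$ itself. The transfer theorems that really do pass noetherianity between $T$ and $B$ (or $T/I$) require $fA_2$ to be semimaximal (a finite intersection of maximal right ideals), which fails badly here: $A_2/fA_2$ is $1$-critical of infinite length. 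Indeed, if your reduction were valid, the theorem would be an immediate corollary of Smith--Stafford's result that $T/I\cong\DD(X)$ is noetherian, and McCaffrey's work in the smooth case would already have been unnecessary; the whole difficulty of the problem lives in the quantifier over $J$ that your step (i) discards. Your step (ii) does not repair this, because it addresses a different question (finite generation of $A_2/fA_2$ as a left $T/I$-module, relevant at best to a left-handed criterion), not the family of Hom-modules that controls right noetherianity.

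Concretely, the paper's proof consists of verifying the Hom condition for all $J$: first one reduces to maximal $J$ using that $A_2/fA_2$ is $1$-critical, then splits the simple quotients $A_2/J$ into two types. For $\kk[x]$- and $\kk[y]$-torsion simples one shows $J=(x-a,y-b)A_2$ and proves finite generation of $((x-a,y-b)A_2:fA_2)/(x-a,y-b)A_2$ over $\DD(X)$ by a growth argument (generalised linear growth with respect to the Bernstein filtration, against a strong filtration on the simple ring $\DD(X)$); for torsionfree simples one localises away from the singular points, uses McCaffrey's smooth-curve result there, and transports finite generation back via the Morita equivalence $\DD(X)\sim\DD(\NX)$ together with Bernstein's preservation of holonomicity. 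None of this machinery is present or replaceable by the filtration handwave in your step (ii). Finally, the left-hand statement is obtained in the paper not by a module-finiteness condition on $A_2/fA_2$ but by the symmetry $f^{-1}\If f=\mathbb{I}_{A_2}(A_2f)$ combined with the left-right symmetric hypothesis on the curve; your proposed left criterion would itself need proof and is not the route taken.
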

We reduce this problem to considering the noetherianity of $\Hom_{A_2}(A_2/fA_2,S)$ as a right $\If/fA_2$-module, where $S$ is a simple right $A_2$-module. We then split this into two cases: when $S \cong A_2/mA_2$ where $m\lhd \kk[x,y]$ is maximal, or when $S$ is not of this form. For the first case, we obtain that the module is noetherian by a careful combinatorial argument. For the second case, we use a localisation argument and the work of \cite{SMST} to derive the result as a consequence of Bernstein's preservation of holonomicity.

A straightforward application of Theorems \ref{A_1} and \ref{Resco} shows that $\mathbb{I}_{A_2}(xA_2)$ is right noetherian. The techniques McCaffrey develops for his result are underpinned by the fact that, locally, nonsingular curves look like the affine line. Unfortunately, this is not the case with singular curves and so we must use different techniques. 

The reason that we require the normalisation map $\phi:\NX \to X$ to be injective is to do with the structure of the ring of differential operators on cuspidal curves in comparison to more complicated curves.
The ring of global differential operators over a variety $X$, $\DD(X)$, in the sense of Grothendieck \cite[16.8.1]{Groeth}, has many nice properties when $X$ is nonsingular. In particular, $\DD(X)$ is a finitely generated, noetherian $\kk$-algebra and, when $X$ is a curve, $\DD(X)$ is a simple, hereditary, noetherian, prime (HNP) ring \cite[16.11.2]{Groeth}, \cite[Chapter 3 Theorem 2.5]{bjorkdiff}. Smith and Stafford further showed that if $X$ is a singular curve then $\DD(X)$ is still a finitely generated, noetherian $\kk$-algebra and, if the normalisation map $\phi:\widetilde{X}\to X$ is injective, $\DD(X)$ remains a simple hereditary ring,  \cite{SMST}.

We also have the following link between rings of differential operators and idealizers.

\begin{proposition}\emph{\cite[Proposition $1.6$]{SMST}}\label{idealizer-diffops}
Let $Y$ be a nonsingular variety and $X$ a subvariety defined by an ideal $I$ of $\mathcal{O}(Y)$. Then
$$\DD(X) \cong \frac{\mathbb{I}_{\DD(Y)}(I\DD(Y))}{I\DD(Y)}.$$
\end{proposition}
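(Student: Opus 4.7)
The plan is to construct the natural restriction-of-operators map
\[
\rho : \mathbb{I}_{\DD(Y)}(I\DD(Y)) \longrightarrow \DD(X),\qquad D \longmapsto \overline D,
\]
where $\overline D$ denotes the induced action of $D$ on $\mathcal{O}(X) = \mathcal{O}(Y)/I$, and then to identify $\ker\rho$ with $I\DD(Y)$ and verify surjectivity. Smoothness of $Y$ enters crucially in the last two steps.

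First I would check that $\rho$ is a well-defined ring homomorphism. If $D \in \mathbb{I}_{\DD(Y)}(I\DD(Y))$ then $D(I) = (D\cdot I)(1) \subseteq (I\DD(Y))(1) = I$, so $D$ descends to a $\kk$-linear endomorphism $\overline D$ of $\mathcal{O}(X)$. An induction on the order of $D$, using that for any lift $\tilde f \in \mathcal{O}(Y)$ of $f \in \mathcal{O}(X)$ the commutator $[D, \tilde f]$ again lies in the idealizer and satisfies $[\overline D, f] = \overline{[D, \tilde f]}$, shows that $\overline D$ is a differential operator on $X$ of the same order. Multiplicativity is immediate.

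For the kernel, the inclusion $I\DD(Y) \subseteq \ker\rho$ is obvious since $(I\DD(Y))(\mathcal{O}(Y)) \subseteq I$. For the reverse inclusion I would work locally in an \'etale coordinate chart $(x_1,\dots,x_n)$ on $Y$ in which $\DD(Y)$ is a free left $\mathcal{O}(Y)$-module on the monomials $\{\partial^\alpha\}$. Writing $D = \sum g_\alpha \partial^\alpha$, the hypothesis $D(\mathcal{O}(Y)) \subseteq I$ applied successively to the monomials $x^\beta$ yields, via the invertibility of $\beta!$ in characteristic zero, an upper-triangular system forcing every $g_\alpha \in I$; a standard patching argument globalizes the conclusion to all of $Y$.

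The main obstacle, as I see it, is surjectivity of $\rho$, where the projectivity of jet modules on a smooth $Y$ is crucial. Using the standard identification $\DD^k(Z) = \Hom_{\mathcal{O}(Z)}(\mathcal{P}^k_Z, \mathcal{O}(Z))$, where $\mathcal{P}^k_Z$ is the module of $k$-th principal parts, smoothness of $Y$ makes $\mathcal{P}^k_Y$ locally free over $\mathcal{O}(Y)$, and the closed embedding $X \hookrightarrow Y$ induces a surjection $\mathcal{P}^k_Y \otimes_{\mathcal{O}(Y)} \mathcal{O}(X) \twoheadrightarrow \mathcal{P}^k_X$. Given $d \in \DD^k(X)$, composing with this surjection yields an $\mathcal{O}(Y)$-linear map $\mathcal{P}^k_Y \to \mathcal{O}(X)$; projectivity of $\mathcal{P}^k_Y$ lifts it through $\mathcal{O}(Y) \twoheadrightarrow \mathcal{O}(X)$ to some $D \in \DD^k(Y)$. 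By construction $\pi \circ D = d \circ \pi$, so $D(I) \subseteq I$; the identity $Df = fD + [D,f]$ for $f \in I$ together with the kernel characterisation above (applied to the lower-order operator $[D,f]$) then forces $D \in \mathbb{I}_{\DD(Y)}(I\DD(Y))$, and $\overline D = d$. Combining the three steps yields the claimed isomorphism.
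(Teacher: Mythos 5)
The paper offers no proof of this statement---it is quoted directly from \cite[Proposition 1.6]{SMST}---so there is no internal argument to compare against; judged on its own, your proof is correct and is essentially the standard argument underlying that citation. The three steps are sound: the restriction map is well defined because the idealizer condition forces $D(I)\subseteq I$ and commutators stay in the idealizer; your local-coordinate computation identifying $\{D\in\DD(Y)\mid D(\mathcal{O}(Y))\subseteq I\}$ with $I\DD(Y)$ is exactly the content of \cite[Proposition 15.5.9(i)]{MR}, which the paper invokes separately (cf.\ the proposition preceding Corollary \ref{idealizer-presentation}), and the patching from \'etale charts is unproblematic since the relevant quotients are finitely generated $\mathcal{O}(Y)$-modules; and surjectivity via the surjection $\mathcal{P}^k_Y\otimes_{\mathcal{O}(Y)}\mathcal{O}(X)\twoheadrightarrow\mathcal{P}^k_X$ together with projectivity of $\mathcal{P}^k_Y$ is precisely where smoothness of $Y$ enters. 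You also correctly flag the one subtlety most sketches elide: ``$D(I)\subseteq I$'' is not literally the idealizer condition, and your reduction $Df=fD+[D,f]$ combined with the kernel characterisation closes that gap.
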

It is natural to ask, given the strong relation between idealizers and rings of differential operators, whether idealizers at singular curves are also left and right noetherian. In this paper we answer that question in the affirmative, at least when the curve is suitably well-behaved. 

\textbf{Acknowledgements.}
The author is an EPSRC-funded student at the University of Edinburgh,
and the material contained in this paper forms part of her PhD thesis. The author would like to thank her supervisor Susan J. Sierra for suggesting this problem and providing guidance, and also the EPSRC.

\section{Preliminaries}
We shall start with a summary of the definitions and results from the literature concerning idealizers at smooth curves and rings of differential operators of possibly singular curves.

We begin by stating important results from the literature which will be used in the main part of the paper.

\subsection{Rings of Differential Operators of Curves}\label{smoothfacts}
We begin with the case when $X$ is a nonsingular variety. Recall that we write $\DD(X)$ for $\DD(\mathcal{O}(X))$. Then $\DD(X)$ has the following properties, \cite[Section $16$]{Groeth}:
\begin{enumerate}
    \item[(a)] $\DD(X)$ is a finitely generated, simple, noetherian domain.
    \item[(b)] The global dimension of $\DD(X)$ is finite; more precisely $\gdim(\DD(X)) = \dim X$. Further, $\GKdim(\DD(X)) = 2\dim X$. We see that in the case that $X$ is a curve, $\DD(X)$ is an HNP ring with GK dimension $2$.
    %\item[(c)] Let $M$ denote a nonzero finitely generated $\DD(X)$-module, then, if $d(M)$ denotes the  Krull dimension of the associated graded module $\gr M$, $d(M)\geq \dim X$. In particular, this says that $\DD(X)$ has no finite dimensional modules.
\end{enumerate}
\begin{lemma}\emph{\cite[Proposition $1.9$]{Muh}}\label{LcDiffs}
Let $S$ be a multiplicatively closed set in a finitely generated commutative $\kk$-algebra $R$. Then
$$\DD(RS^{-1}) = \DD(R)\otimes_{R}RS^{-1} = \DD(R)S^{-1},$$
that is to say, localising commutes with taking rings of differential operators.
\end{lemma}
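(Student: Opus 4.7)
The plan is to construct a ring homomorphism $\Phi: \DD(R) \otimes_R RS^{-1} \to \DD(RS^{-1})$ and then show it is an isomorphism. First, I would check that $S$ satisfies the Ore conditions in $\DD(R)$, so that the tensor product has a well-defined ring structure realising the Ore localisation $\DD(R)S^{-1}$ (this gives the second equality in the statement). For $d \in \DD(R)$ of order $\leq n$ and $s \in S$, the iterated commutator $(\operatorname{ad} s)^{n+1}(d) = 0$; expanding $s^{n+1} d$ via the Leibniz-type identity
\[
s^{n+1} d \;=\; \sum_{k=0}^{n} \binom{n+1}{k} (\operatorname{ad} s)^k(d)\cdot s^{n+1-k}
\]
exhibits $s^{n+1} d$ as a right multiple of $s$ in $\DD(R)$, which yields the Ore property.

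Next, I would extend any $d \in \DD(R)$ of order $\leq n$ to an operator $\tilde d \in \DD(RS^{-1})$ of the same order, by induction on $n$. The base case $n = 0$ is immediate: multiplication by $r \in R$ extends to multiplication by $r/1 \in RS^{-1}$. For $n \geq 1$, the commutator $[d, s]$ has order $\leq n - 1$ for each $s \in S$, so by induction it extends uniquely to $\widetilde{[d,s]} \in \DD(RS^{-1})$. One then defines
\[
\tilde d(r/s) \;=\; \tfrac{1}{s}\bigl( d(r) - \widetilde{[d,s]}(r/s) \bigr),
\]
checks that this is independent of the choice of representative, and verifies both the order bound and the uniqueness of the extension. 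Assembling these extensions and composing with multiplication by elements of $RS^{-1}$ produces $\Phi$.

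For injectivity of $\Phi$, if $d \cdot s^{-1}$ maps to zero then $\tilde d$ vanishes on $RS^{-1}$, in particular on $R$, forcing $d = 0$. For surjectivity, given $D \in \DD(RS^{-1})$ of order $\leq n$, I would find $s \in S$ with $D \cdot s$ restricting to an element of $\DD(R)$. Since $R$ is finitely generated and $[r_0, [r_1, \ldots, [r_n, D]\cdots]] = 0$ for all $r_i \in R$, the operator $D$ is determined on $R$ by its values on finitely many monomials in the generators of $R$; each such value lies in $RS^{-1}$, so a single $s \in S$ simultaneously clears all the denominators, and the resulting operator sends $R$ to $R$ while inheriting the commutator bounds from $D$.

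I expect the main obstacle to be the surjectivity argument: one must check that the chosen $s$ really yields an operator $D \cdot s$ which, restricted to $R$, satisfies the full differential operator conditions of order $\leq n$, rather than merely sending $R$ to $R$. This requires careful bookkeeping of the commutator identities and exploiting the finite generation of $R$ to reduce the verification to finitely many instances.
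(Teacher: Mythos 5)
The paper gives no argument for this lemma at all --- it is quoted verbatim from the reference [Muh, Proposition 1.9] --- so there is no internal proof to compare against; your outline is the standard elementary argument for this standard fact, and its overall architecture (Ore condition from nilpotency of $\mathrm{ad}(s)$ on operators of bounded order, unique order-preserving extension of operators to $RS^{-1}$, clearing denominators for surjectivity) is sound. Two bookkeeping points. Your displayed identity $s^{n+1}d=\sum_{k=0}^{n}\binom{n+1}{k}(\mathrm{ad}\,s)^{k}(d)\,s^{n+1-k}$ shows $s^{n+1}d\in\DD(R)s$, which is the \emph{left} Ore condition (fractions $s^{-1}d$); to justify the right-hand object $\DD(R)S^{-1}$ and the middle term $\DD(R)\otimes_{R}RS^{-1}$ you also want the mirror identity $ds^{n+1}\in s\DD(R)$ (same proof), after which the two localisations coincide. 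Likewise, in the surjectivity step the operator whose values on the chosen monomials are the denominator-cleared elements $s\,D(x^{\beta})$ is $sD$, not $D\circ s$; one lands first in $S^{-1}\DD(R)$ and then rewrites via Ore.

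The genuine gap is the last inference of your surjectivity step. That an order $\leq n$ operator is \emph{determined} by its values on monomials of degree $\leq n$ in a finite generating set does not imply that $E:=sD$ maps $R$ into $R$ once those finitely many values lie in $R$: determination is not containment. What is actually needed is the claim that if $E\in\DD^{n}(RS^{-1})$ and $E(x^{\beta})\in R$ for all monomials of degree $\leq n$ in generators $x_{1},\dots,x_{m}$ of $R$, then $E(R)\subseteq R$. This is true, but requires a double induction: each $[E,x_{i}]$ has order $\leq n-1$ and sends monomials of degree $\leq n-1$ into $R$, so by induction on order $[E,x_{i}](R)\subseteq R$; then induction on the degree of a monomial via $E(x_{i}x^{\gamma})=x_{i}E(x^{\gamma})+[E,x_{i}](x^{\gamma})$ gives $E(R)\subseteq R$. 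By contrast, the obstacle you flag --- that $E|_{R}$ might fail the order $\leq n$ conditions --- is not an issue: once $E(R)\subseteq R$, the identity $[E,a](r)=E(ar)-aE(r)$ shows all iterated commutators with elements of $R$ again map $R$ to $R$, so $E|_{R}\in\DD^{n}(R)$ automatically. The same commutator induction settles uniqueness and well-definedness of your extension $\tilde d$ (an operator on $RS^{-1}$ of order $\leq n$ vanishing on $R$ is zero), at least when $R\hookrightarrow RS^{-1}$, e.g.\ for the domains to which the paper applies the lemma. Finally, note there is a slicker route, closer to where finite generation of $R$ genuinely enters: $\DD^{n}(R)=\Hom_{R}(P^{n}_{R/\kk},R)$ with $P^{n}_{R/\kk}$ finitely presented since $R$ is affine, and $\Hom$ out of a finitely presented module commutes with the flat base change $R\to RS^{-1}$; taking the union over $n$ gives all three identifications at once.
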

We now move on to the case where $X$ is a curve with singular points. An important construction  is that of the normalisation of $X$, which we denote $\widetilde{X}$; that is, $\mathcal{O}(\NX)$ is the integral closure of $\mathcal{O}(X)$ inside the function field $\kk(X)$. When $X$ is a curve, the normalisation is also a curve. We suppose further that the normalisation map $\phi:\widetilde{X}\to X$ is injective; that is, the singularities of $X$ are all cusps.

The following are two sets of differential operators between different modules or $\kk$-algebras.
\begin{Definition}\label{D-modules}
Let $A$ be a commutative $\kk$-algebra and let $M$ and $N$ be $A$-modules. Then we define the space of \emph{$\kk$-linear differential operators from $M$  to $N$ of order at most $n$} inductively by $\DD^{-1}(M,N) = 0$ and for $n\geq 0$:
$$\DD^n(M,N) = \{\theta \in \Hom_{\kk}(M,N) \mid [\theta,a]\in \DD^{n-1}(M,N) \text{ for all } a \in A\}.$$
We denote the space of all $\kk$-linear differential operators from $M$ to $N$ by $\DD_A(M,N)$.
\end{Definition}
We also have the following definition for differential operators between $\kk$-algebras.

\begin{Definition}\label{D-rings}
If $A\subseteq B$ are commutative $k$-algebras then we write 
$$\DD(B,A) = \{D\in \DD(B) \mid D * f \in A \textrm{ for all } f \in B\},$$
where $D*f$ denotes the action of the differential operator $D$ on $f$; that is, the (left) $\DD(B)$-module structure on $B$.
\end{Definition}
We note that $\DD_A(B,A)$ and $\DD(B,A)$ are not necessarily equal. However, in the case that $A$ and $B$ are both domains such that $A\subseteq B\subseteq \textrm{ Fract }(A)$ (the situation in which we are interested), then these two sets are equal \cite[Lemma 2.7]{SMST}. When $B = \mathcal{O}(\widetilde{X})$ and $A = \mathcal{O}(X)$, we shall write $\DD(\widetilde{X},X) \coloneqq \DD(\mathcal{O}(\widetilde{X}),\mathcal{O}(X))$. We note that this is both a right ideal of $\DD(\widetilde{X})$ and also a left ideal of $\DD(X)$. If $\DD(X)$ and $\DD(\NX)$ are Morita equivalent we may use Definition \ref{D-modules} to define $\DD(X,\NX) \cong \DD_{\mathcal{O}(X))}(\mathcal{O}(X),\mathcal{O}(\NX))$, which is $\Hom_{\DD(\NX)}(\DD(\NX,X),\DD(\NX))$, the dual of $\DD(\NX,X)$ \cite[Proposition 3.14]{SMST}.

An important technique in this paper will be to identify $\DD(X)$ and $\DD(\NX)$ with subalgebras of $\DD(K)$ where $K$ is the fraction field of $\mathcal{O}(X)$. If $K = \kk(X)$ is the field of fractions associated to $\mathcal{O}(X)$, then we may identify $\DD(\NX)$ with its image in $\DD(K)$ as follows:
$$\DD(\NX) = \{D \in \DD(K) \mid D*(\mathcal{O}(\NX))\subseteq \mathcal{O}(\NX)\}.$$
We may also identify $\DD(\NX,X)$ as:
$$\DD(\NX,X) = \{D \in \DD(K) \mid D*(\mathcal{O}(\NX))\subseteq \mathcal{O}(X)\}.$$

Then we have the following result due to Smith and Stafford.
\begin{theorem}\emph{\cite[Theorem 3.4]{SMST}}\label{SS}
Let $X$ be a curve. Then the following are equivalent:
\begin{enumerate}
    \item [$(1)$] The normalisation map $\phi: \widetilde{X}\to X$ is injective;
    \item [$(2)$] $\DD(\NX)$ is Morita equivalent to $\DD(X)$ via the progenerator $\DD(\NX,X)$.
    \end{enumerate}

\end{theorem}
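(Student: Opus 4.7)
The plan is to work inside $\DD(K)$, with $K = \kk(X) = \kk(\NX)$, so that $\DD(X)$, $\DD(\NX)$, and the bimodule $P := \DD(\NX,X)$ all sit inside a single ambient ring. Since $\NX$ is nonsingular, Section \ref{smoothfacts} tells us $\DD(\NX)$ is a simple, hereditary, noetherian $\kk$-algebra, and these are precisely the structural ingredients needed to run a Morita argument.

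For $(1) \Rightarrow (2)$, the goal is to show $P = \DD(\NX,X)$ is a progenerator for right $\DD(\NX)$-modules with $\End_{\DD(\NX)}(P) \cong \DD(X)$ acting on the left. As a nonzero right ideal of the hereditary simple ring $\DD(\NX)$, $P$ is automatically projective and a generator, so the content is to check that $P$ is finitely generated as a right $\DD(\NX)$-module and to identify its endomorphism ring with $\DD(X)$. Both points reduce, via the localisation principle of Lemma \ref{LcDiffs}, to local statements at each $p \in X$: away from the singular locus $\phi$ is an isomorphism and the claim is automatic, while at a singular $p$, injectivity of $\phi$ gives $\phi^{-1}(p) = \{\tilde p\}$. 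Using an explicit parametrisation of the cusp one then constructs enough operators in $\DD(\NX,X)$ and in its dual $\DD(X,\NX)$ to verify the product identities $\DD(\NX,X)\cdot\DD(X,\NX) = \DD(\NX)$ and $\DD(X,\NX)\cdot\DD(\NX,X) = \DD(X)$, from which both finite generation and the endomorphism ring identification drop out by a standard Morita criterion.

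For $(2) \Rightarrow (1)$, Morita equivalence transfers simplicity and heredity from $\DD(\NX)$ to $\DD(X)$, and induces a bijection between isomorphism classes of simple right modules. The simples $\mathcal{O}(\NX)/m\mathcal{O}(\NX)$ indexed by closed points of $\NX$ are pairwise nonisomorphic and, via the Morita bimodule $\DD(\NX,X)$, correspond to simple $\DD(X)$-modules whose supports refine the decomposition of $X$ into $\phi$-fibres. If two distinct points $\tilde p_1, \tilde p_2$ of $\NX$ lay over a single point of $X$, tracking the supports on the $\DD(X)$-side would force the corresponding pair of simples to coincide, a contradiction.

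The principal obstacle is the construction step in $(1) \Rightarrow (2)$: no abstract property of hereditary simple rings forces the multiplication map $\DD(\NX,X)\otimes_{\DD(X)}\DD(X,\NX) \to \DD(\NX)$ to be surjective, and one must do hands-on work at each cusp, using the local model $\kk[[t^a,t^b]] \subseteq \kk[[t]]$ (or the analogous parametrisation), to exhibit concrete differential operators whose composites fill out $\DD(\NX)$. The cofiniteness of the conductor ideal $\mathfrak{c} = \{a \in \mathcal{O}(\NX) : a\mathcal{O}(\NX) \subseteq \mathcal{O}(X)\}$ in both $\mathcal{O}(X)$ and $\mathcal{O}(\NX)$, which is exactly what the injective-normalisation/cuspidal hypothesis buys, is what makes this achievable.
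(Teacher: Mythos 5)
This theorem is not proved in the paper at all: it is quoted verbatim from \cite[Theorem 3.4]{SMST}, so your proposal has to be measured against Smith and Stafford's original argument, and as written it is an outline rather than a proof. In the direction $(1)\Rightarrow(2)$ you are right that $P=\DD(\NX,X)$ is automatically a progenerator for $\DD(\NX)$ (it is a nonzero right ideal of a simple hereditary noetherian ring; finite generation, which you list as part of the content, is already automatic from noetherianity), so the genuine content is the identification $\End_{\DD(\NX)}(P)\cong\DD(X)$, equivalently the product identities $\DD(\NX,X)\cdot\DD(X,\NX)=\DD(X)$ and $\DD(X,\NX)\cdot\DD(\NX,X)=\DD(\NX)$. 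But you then defer exactly this step (``one must do hands-on work at each cusp'') without carrying it out, and the two identities are not on the same footing: $\DD(X,\NX)\cdot\DD(\NX,X)$ is a nonzero two-sided ideal of the simple ring $\DD(\NX)$, so that identity is essentially free, whereas $\DD(\NX,X)\cdot\DD(X,\NX)=\DD(X)$ is in effect a simplicity-type statement for $\DD(X)$ at a unibranch singularity and is the heart of Smith and Stafford's paper. Note too that injectivity of $\phi$ only gives a unibranch subring of $\kk[[t]]$, not necessarily a monomial model $\kk[[t^a,t^b]]$, and the theorem as stated is for arbitrary curves, not just plane curves; so even the local model you propose to compute with is not quite the general case. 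In short, the principal step is named but not supplied.

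The direction $(2)\Rightarrow(1)$ does not work as you have set it up. The quotients $\mathcal{O}(\NX)/m\mathcal{O}(\NX)$ are not $\DD(\NX)$-modules: since $\NX$ is a smooth affine curve, $\mathcal{O}(\NX)$ is itself a simple $\DD(\NX)$-module, so there is no family of ``point simples'' of this form to push through the Morita equivalence, and the notions you then invoke --- the ``support'' of a simple $\DD(X)$-module on the singular curve, the claim that the supports ``refine the decomposition of $X$ into $\phi$-fibres'', and the assertion that two distinct simples would be forced to coincide --- are neither defined nor justified. The established route for this implication is via a Morita invariant: when $\phi$ is not injective, $\DD(X)$ fails to be simple (this is part of Smith and Stafford's analysis of the non-unibranch case, e.g.\ the node), whereas $\DD(\NX)$ is always simple, so no Morita equivalence can exist; alternatively one can exploit that statement $(2)$ asserts the equivalence is implemented by the specific bimodule $\DD(\NX,X)$, a hypothesis your converse argument never actually uses. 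As it stands, then, both directions contain genuine gaps, and the result should be treated as what it is in the paper: an imported theorem whose proof occupies a substantial part of \cite{SMST}.
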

From this result we have the following properties of $\DD(X)$.
\begin{corollary}\label{noethsimp}\emph{\cite[Theorems A and B]{SMST}}
Suppose $\phi:\NX \to X$ is injective. Then $\DD(X)$ is
\begin{enumerate}
    \item [(a)] right and left noetherian;
    \item[(b)]  a finitely generated $\kk$-algebra;
    \item[(c)]  a hereditary ring with (Gabriel-Rentschler) Krull dimension $1$, and GK dimension $2$;
    \item[(d)]  a simple ring.
\end{enumerate}
\end{corollary}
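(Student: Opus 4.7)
The plan is to transfer each of (a)--(d) from $\DD(\NX)$ to $\DD(X)$ along the Morita equivalence supplied by Theorem \ref{SS}. Since $\NX$ is a nonsingular curve, $\DD(\NX)$ has all the listed properties by the facts recalled in Section \ref{smoothfacts}: it is a finitely generated, simple, noetherian, hereditary domain of Krull dimension $1$ and GK dimension $2$.

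For parts (a), (c), and (d), I would appeal to Morita invariance. Noetherianity on both sides, simplicity (the lattice of two-sided ideals is preserved by any Morita equivalence), global dimension, and Gabriel-Rentschler Krull dimension are all Morita invariants of rings; GK dimension is likewise preserved, as it can be read off from the category of finitely generated modules. So under the identification $\DD(X) \cong \End_{\DD(\NX)}(\DD(\NX,X))$, each of these properties transfers immediately from $\DD(\NX)$ to $\DD(X)$.

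The main obstacle is (b), since finite generation as a $\kk$-algebra is not a general Morita invariant. Because $\DD(\NX,X)$ is a finitely generated projective right $\DD(\NX)$-module, one gets $\DD(X) \cong e M_m(\DD(\NX)) e$ for some $m$ and some idempotent $e \in M_m(\DD(\NX))$, but corner rings of finitely generated algebras need not themselves be finitely generated. My approach would be to exploit the concrete realisation $\DD(X) \subseteq \DD(\NX) \subseteq \DD(K)$ together with the finiteness of $\mathcal{O}(\NX)$ over $\mathcal{O}(X)$: filter by order of differential operators, observe that in each order the space $\DD(\NX,X)$ is a finite $\mathcal{O}(X)$-module, and construct generators of $\DD(X)$ by composing operators in $\DD(\NX,X)$ with operators in the dual $\DD(X,\NX)$, which exists under the hypothesis of Theorem \ref{SS}. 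This is essentially the argument of Smith--Stafford that supplies the missing ingredient beyond formal Morita theory; the other parts are automatic once (b) is in place.
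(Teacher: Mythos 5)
Your proposal is correct and follows essentially the same route as the paper, which gives no independent argument but simply cites Smith--Stafford's Theorems A and B; those are established exactly as you sketch, by transferring noetherianity, simplicity, heredity, Krull dimension and GK dimension across the Morita equivalence of Theorem \ref{SS} from the nonsingular curve $\NX$ (using the facts in Section \ref{smoothfacts}), with finite generation as a $\kk$-algebra supplied by the direct Smith--Stafford filtration argument rather than formal Morita theory. The only remark worth adding is that finite generation (and noetherianity) in \cite{SMST} is their Theorem A, valid for arbitrary curves without the injectivity hypothesis, so part (b) does not depend on the Morita equivalence at all.
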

\subsection{Holonomic Modules}
We are particularly interested in a certain type of module over a ring of differential operators - a holonomic module. Whilst these modules have a simple definition, their properties are surprisingly good. We have the following result due to Bernstein.
\begin{theorem}\emph{\cite{bernstein1972analytic}}
If $X = \Spec R$ is a nonsingular variety and  $M$ is a nonzero $\DD(R)$-module, then $\GKdim(M)\geq \dim X$.
\end{theorem}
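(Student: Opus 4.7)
The plan is to prove Bernstein's inequality first in the fundamental case $R=\kk[x_1,\ldots,x_n]$, so $\DD(R)=A_n$, and then indicate how the general smooth case reduces to this one. We may assume $M$ is finitely generated, since otherwise we replace $M$ by a nonzero finitely generated submodule (which only decreases $\GKdim$). The key tool is the \emph{Bernstein filtration} $\{B_k\}_{k\geq 0}$ on $A_n$, where $B_k$ is the $\kk$-span of monomials $x^\alpha\partial^\beta$ of total degree $|\alpha|+|\beta|\leq k$. An elementary count gives $\dim_\kk B_k=\binom{k+2n}{2n}$, a polynomial in $k$ of degree exactly $2n$.

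Choose a good filtration $\{M_k\}$ of $M$ compatible with $\{B_k\}$, arranged so that $M_0\neq 0$ (hence $M_k\neq 0$ for all $k$), and with $\dim_\kk M_k$ eventually polynomial in $k$ of degree $d=\GKdim(M)$. The heart of the argument is the following injectivity lemma: for every $k\leq l$, the multiplication map $B_k\to\Hom_\kk(M_l,M_{k+l})$ is injective. I would prove this by induction on $k$. The base case $k=0$ is immediate since $B_0=\kk$ and $M_l\neq 0$. For the inductive step, suppose $P\in B_k$ annihilates $M_l$. For any generator $z\in\{x_i,\partial_i\}$, the commutator $[z,P]\in B_{k-1}$ satisfies
\[
[z,P]\,M_{l-1}\ \subseteq\ zPM_{l-1}+PzM_{l-1}\ \subseteq\ z\cdot 0+P\cdot M_l\ =\ 0,
\]
using $PM_{l-1}\subseteq PM_l=0$ and $zM_{l-1}\subseteq M_l$. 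Since $l-1\geq k-1$, the inductive hypothesis forces $[z,P]=0$ for every generator $z$, so $P$ is central in $A_n$; hence $P\in\kk$, and $PM_l=0$ together with $M_l\neq 0$ forces $P=0$. Specialising to $l=k$ now yields $\binom{k+2n}{2n}=\dim B_k\leq(\dim M_k)(\dim M_{2k})$. The right-hand side grows like a polynomial of degree at most $2d$, while the left-hand side has degree exactly $2n$; comparing leading behaviour forces $2n\leq 2d$, i.e., $\GKdim(M)\geq n=\dim X$.

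For a general smooth affine $X$ of dimension $n$, the strategy is to reduce to $A_n$: choose a closed point $p\in X$ at which a stalk of $M$ is nonzero, and pass to a standard open $U\ni p$ admitting étale coordinates $x_1,\ldots,x_n$. On such a $U$, $\DD(U)$ carries a Bernstein-type filtration whose Hilbert polynomial still has degree $2n$, and the localisation $M|_U$ is nonzero with $\GKdim(M|_U)\leq\GKdim(M)$, so the Weyl-algebra argument applies verbatim. The step most likely to cause trouble is constructing and bounding the Bernstein-type filtration in the étale setting (the polynomial growth of the filtration is no longer visible from a single monomial basis). An alternative route that sidesteps this is to work instead with the order filtration on $\DD(X)$, whose associated graded is $\mathcal{O}(T^*X)$, and invoke Gabber's theorem on the involutivity of the characteristic variety to obtain $\dim\supp(\gr M)\geq n$ directly.
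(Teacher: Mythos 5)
The paper does not prove this statement -- it is quoted directly from Bernstein's paper -- so there is no internal argument to compare yours against; I assess it on its own terms. Your Weyl-algebra case is the classical Bernstein--Joseph proof and is correct and complete: the reduction to finitely generated $M$, the injectivity lemma for $B_k \to \Hom_\kk(M_l,M_{k+l})$ (the commutator computation, the use of $M_l \neq 0$ at every level, and the triviality of the centre of $A_n$ in characteristic $0$ are all deployed correctly), and the comparison of $\dim_\kk B_k = \binom{k+2n}{2n}$ with $(\dim_\kk M_k)(\dim_\kk M_{2k})$ giving $2n \leq 2d$.

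The genuine weak point is exactly the one you flag: the passage from $A_n$ to a general nonsingular $X$, which is what the statement (and the paper, which applies it to curves such as $\NX$ and their localisations) actually requires. A ``Bernstein-type filtration'' on $\DD(U)$ in \'etale coordinates does not exist in the form your induction needs: although $[\partial_i,x_j]=\delta_{ij}$ still holds, $\mathcal{O}(U)$ is not generated by the $x_i$, and no exhaustion of $\mathcal{O}(U)$ by finite-dimensional subspaces satisfies $\partial_i(V_k)\subseteq V_{k-1}$ (already for $\kk[x,x^{-1}]$ the derivation raises the order of the pole), so the degree-lowering property of commutators on which the whole argument rests is lost; ``applies verbatim'' is therefore not justified. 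Your fallback -- the order filtration together with Gabber's involutivity theorem -- is a correct and standard way to finish, at the price of a much deeper input. Two cheaper repairs: (i) take a closed embedding $X\hookrightarrow \mathbb{A}^m$ and use Kashiwara's equivalence, which turns $M$ into an $A_m$-module of GK dimension $\GKdim(M)+(m-\dim X)$, so your Weyl-algebra inequality yields the general case; or (ii) restrict $M|_U$ along the \'etale map $U\to\mathbb{A}^n$ to the copy of $A_n$ inside $\DD(U)$: it is a nonzero $A_n$-module, your inequality bounds its GK dimension below by $n$, and GK dimension over a subalgebra is at most that over $\DD(U)$ (you still owe the easy check that passing from $M$ to the localisation $M|_U$ does not increase GK dimension).
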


\begin{Definition}
We define a \emph{holonomic} $D$-module to be a finitely generated module such that $\GKdim(M) = \dim X$.
\end{Definition}
The following theorem shows that, in the setting in which we are working, holomonic modules behave particularly well.
\begin{theorem}[Bernstein's preservation of holonomicity]\label{Bern}\emph{\cite[Theorem A]{B}}
Let $D = \DD(X)$ be a ring of differential operators over a smooth algebraic variety $X$. Let $S\subseteq \mathcal{O}(X)$ be a multiplicatively closed subset and let $M$ be a holonomic right $DS^{-1}$-module. Then $M$ is holonomic as a right $D$-module.
\end{theorem}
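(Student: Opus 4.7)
The plan is to reduce the problem to localization at a single element and then invoke Bernstein's classical theorem on the preservation of holonomicity under such a localization, which ultimately rests on the existence of the $b$-function. Since $M$ is finitely generated as a right $DS^{-1}$-module and $DS^{-1}$ is noetherian, any finite presentation of $M$ involves denominators from only finitely many elements of $S$; taking $f$ to be their product, we may replace $S$ by $\{f^n : n \geq 0\}$ and set $D_f := DS^{-1}$, which by Lemma \ref{LcDiffs} is identified with $\DD$ of the open subset $X \setminus \mathbb{V}(f)$.

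Next I would construct a finitely generated $D$-submodule of $M$ that is itself $D$-holonomic. Pick generators $m_1, \ldots, m_r$ of $M$ over $D_f$ and set $M_0 := \sum_i m_i D$. The right-Ore form of elements of $D_f$ writes every $m \in M$ as $m_0 \cdot f^{-N}$ with $m_0 \in M_0$ and $N \geq 0$, so $M_0[f^{-1}] = M$. A good filtration $F^\bullet$ on $M$ over $D_f$ induces the $D$-filtration $G^n M_0 := \sum_i m_i \cdot F^n D$, where $F^n D$ denotes operators of order $\leq n$. Each $G^n M_0$ is finitely generated over $\mathcal{O}(X)$, so this is a good $D$-filtration on $M_0$, and the support of $\gr^G M_0$ in the cotangent bundle $T^*X$ lies in the closure of the characteristic variety of $M$ over $D_f$. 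Since that characteristic variety is Lagrangian of dimension $\dim X$ in $T^*(X \setminus \mathbb{V}(f))$, its closure in $T^*X$ is again of dimension $\dim X$, giving $\GKdim_D M_0 \leq \dim X$. Combined with Bernstein's inequality this forces $\GKdim_D M_0 = \dim X$, so $M_0$ is $D$-holonomic.

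To conclude I would invoke Bernstein's theorem that, for any finitely generated holonomic $D$-module $N$ and any $g \in \mathcal{O}(X)$, the localization $N[g^{-1}]$ is holonomic as a $D$-module. Applied to $N = M_0$ and $g = f$, this yields that $M = M_0[f^{-1}]$ is $D$-holonomic, as required.

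The hardest input is the appeal to Bernstein's theorem in the last step, which relies on the existence, for each $f \in \mathcal{O}(X)$, of $P(s) \in D \otimes_\kk \kk[s]$ and $0 \neq b(s) \in \kk[s]$ satisfying the functional equation $P(s) f^{s+1} = b(s) f^s$. The same $b$-function input also underpins the claim that the closure of the $D_f$-characteristic variety in $T^*X$ has the expected dimension rather than jumping up along $\mathbb{V}(f)$; granted these, the GK-dimension bookkeeping is routine.
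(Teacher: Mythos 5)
The paper does not actually prove Theorem \ref{Bern}; it quotes it from Bernstein \cite[Theorem A]{B} (with Elliott cited for the open-immersion formulation), so the only question is whether your sketch would stand on its own, and it has a genuine gap at its central step. The claim that $\mathrm{Supp}(\gr^G M_0)$ lies in the closure of the characteristic variety of $M$ over $D_f$ is precisely what needs proof. Since the order filtration localises, you do get $\mathrm{Ch}(M_0)\cap T^*(X\setminus \mathbb{V}(f))=\mathrm{Ch}_{D_f}(M)$, but nothing in your argument excludes extra components of $\gr^G M_0$ lying over $\mathbb{V}(f)$, which a priori can have dimension up to $2\dim X-1$; ruling these out is essentially equivalent to the holonomicity of $M_0$, i.e.\ to the theorem itself. (The issue is not whether the closure of a $\dim X$-dimensional set has dimension $\dim X$ --- that is trivial --- but whether the containment holds at all.) Even in the simplest instance $M=\mathcal{O}(X)[f^{-1}]$ with $M_0=f^{-k}D$, holonomicity of $M_0$ \emph{is} Bernstein's theorem and needs the $b$-function or a growth argument; it does not follow from the dimension bookkeeping you describe. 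Your closing remark that the $b$-function ``also underpins'' the containment concedes this but supplies no argument, and the functional equation you quote, $P(s)f^{s+1}=b(s)f^s$, is the $b$-function of $f$ acting on $\mathcal{O}(X)$, not the statement needed here: one needs $b$-functions for sections of the module (or of a coherent extension), or else Bernstein's criterion that a $D$-module admitting a filtration compatible with the order (or Bernstein) filtration whose pieces grow with degree $\dim X$ is automatically finitely generated and holonomic. The classical proof applies that criterion directly to $M$, with a filtration built from a good $D_f$-filtration and powers of $f$, and thereby never needs to know anything about $M_0$ in advance.

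Two further points. First, the reduction from a general multiplicative set $S$ to a single $f$ is not free: clearing denominators in a presentation produces a finitely generated $D_f$-module $M'$ with $M'\otimes_{D_f}DS^{-1}\cong M$, but you then need $M'$ (equivalently, a suitable $D_f$-submodule of $M$) to be holonomic over $D_f$, which is the same kind of descent statement you are trying to prove; one can patch this by proving the single-$f$ case and writing $M$ as a directed union of localisations $M_0[g^{-1}]$ over finite products $g$ of elements of $S$, with a stabilisation argument, but as written the step is asserted rather than proved. Second, invoking ``localisation of a holonomic $D$-module is holonomic'' as a black box in the last step is legitimate (it is a citable theorem, proved by the $b$-function), but once it is granted, the only mathematical content remaining in your sketch is exactly the holonomicity of $M_0$ --- the step where the gap sits.
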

Although Bernstein gives this result as a statement about derived categories of holonomic $D$-modules, it is well known that open immersions send holonomic $D$-modules to holonomic $D$-modules; for a proof, see \cite[Theorem 3.23]{Elliott}.
\subsection{Idealizers at curves}
In this subsection we summarise the results from the literature which we will use in the main part of the paper. We will focus on the idealizer at a curve defined by a polynomial $f\in \kk[x,y]$, more precisely, we mean the ring:
$$\If = \{P\in A_2 \mid PfA_2\subseteq fA_2\}.$$
Recall the definition of the colon ideal for two right ideals $I,J$ in a ring $R$:
$$(J:I) = \{r\in R \mid rI\subseteq J\}.$$
We have the following result which we will use frequently in this paper.
\begin{proposition}\label{homsquotient}\emph{\cite[Proposition 1.1]{Robson1972}}
Let $I$ be a right ideal in a ring $R$ and let $J$ be a right ideal of $R$ which contains $I$. Then
$$\Hom_R(R/I,R/J) \cong \frac{(J:I)}{J},$$
considered as right modules over $\mathbb{I}_R(I)$.
\end{proposition}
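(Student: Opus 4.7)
The plan is to write down the obvious map and verify it is the claimed isomorphism; the statement is essentially formal. Define $\Psi \colon (J:I) \to \Hom_R(R/I, R/J)$ by $\Psi(r)(s+I) = rs + J$. The hypothesis $r \in (J:I)$, i.e.\ $rI \subseteq J$, is exactly what is needed to make $\Psi(r)$ well-defined on $R/I$, and right $R$-linearity is then automatic. Surjectivity of $\Psi$ follows from the cyclicity of $R/I$: any $\phi \in \Hom_R(R/I, R/J)$ is determined by $\phi(1+I) = r + J$, and well-definedness of $\phi$ on $R/I$ forces $r \in (J:I)$, whence $\phi = \Psi(r)$. The kernel is $J$, since $\Psi(r) = 0$ iff $\Psi(r)(1+I) = r + J = 0$ in $R/J$, iff $r \in J$. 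This gives the abelian group isomorphism $(J:I)/J \cong \Hom_R(R/I, R/J)$.

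To promote this to an isomorphism of right $\mathbb{I}_R(I)$-modules, I would identify the actions on each side. On $(J:I)/J$ the action is right multiplication by elements of the idealizer: for $\alpha \in \mathbb{I}_R(I)$ and $r \in (J:I)$, the inclusion $\alpha I \subseteq I$ gives $r\alpha I \subseteq rI \subseteq J$, so $r\alpha \in (J:I)$, and the quotient action is well-defined because $J$ is right $R$-stable. On $\Hom_R(R/I, R/J)$ the natural right $\End_R(R/I)$-action by pre-composition pulls back to an $\mathbb{I}_R(I)$-action given by $(\phi \cdot \alpha)(s+I) = \phi(\alpha s + I)$, which makes sense precisely because $\alpha$ normalises $I$. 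Compatibility is then the one-line calculation $\Psi(r\alpha)(s+I) = r\alpha s + J = \Psi(r)(\alpha s + I) = (\Psi(r) \cdot \alpha)(s+I)$.

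There is no genuine obstacle here; the only subtlety is the bookkeeping about why each $\mathbb{I}_R(I)$-action is well-defined, and that subtlety is exactly the defining property of the idealizer. Indeed, the case $J = I$ recovers the classical identification $\End_R(R/I) \cong \mathbb{I}_R(I)/I$, which may be regarded as the motivating feature of $\mathbb{I}_R(I)$.
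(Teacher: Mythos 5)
Your proof is correct: the map $\Psi(r)(s+I)=rs+J$, with kernel $J$, surjectivity via cyclicity of $R/I$, and the check that $\Psi$ intertwines the two right $\mathbb{I}_R(I)$-actions, is exactly the standard argument. The paper does not prove this proposition itself but cites Robson, and your argument is the same one found there, so there is nothing further to compare.
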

\begin{proposition}\emph{\cite[cf. Prop. $15.5.9$]{MR}}
Let $I,J\lhd R = \kk[x,y]$ and let $\beta \in A_2$. Then
\begin{enumerate}
    \item[(1)] $\beta *R\subseteq I$ if and only if $\beta \in IA_2$;
    \item[(2)] $\beta*I\subseteq J$ if and only if $\beta \in (JA_2:IA_2),$
\end{enumerate}
where $\beta*R$ denotes the standard action of $A_2$ on $R$.
\end{proposition}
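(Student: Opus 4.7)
The plan is to handle (1) directly via the left $R$-module structure of $A_2$, then derive (2) as a short consequence. I would use that $A_2$ is a free left $R$-module on $\{\partial_x^i\partial_y^j\}_{i,j\geq 0}$, so each $\beta\in A_2$ has a unique normal-form expansion $\beta=\sum_{i,j}g_{ij}\partial_x^i\partial_y^j$ with $g_{ij}\in R$, and $\beta\in IA_2$ holds iff every $g_{ij}\in I$. The forward direction of (1) is then immediate: if every $g_{ij}\in I$, then for each $r\in R$ one has $\partial_x^i\partial_y^j*r\in R$, so $\beta*r=\sum g_{ij}\,(\partial_x^i\partial_y^j*r)\in I$ because $I$ is an $R$-ideal.

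For the converse of (1), I would perform a double induction on $i+j$, extracting $g_{ij}$ from the action on the rescaled monomial $x^iy^j/(i!j!)$. A direct calculation of the Weyl-algebra action gives
\[
\beta*\frac{x^iy^j}{i!j!}\;=\;g_{ij}\;+\;\sum_{\substack{a\leq i,\;b\leq j\\(a,b)\neq(i,j)}}g_{ab}\,\frac{x^{i-a}y^{j-b}}{(i-a)!(j-b)!}.
\]
The base case $(i,j)=(0,0)$ is just $g_{00}=\beta*1\in I$. For the inductive step, once every $g_{ab}$ with $a+b<i+j$ is known to lie in $I$, the entire sum on the right-hand side is in $I$; since the left-hand side lies in $I$ by hypothesis, so does $g_{ij}$.

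For (2), unwinding the colon ideal, $\beta\in(JA_2:IA_2)$ iff $\beta f\in JA_2$ for every $f\in I$ (further right-multiplication by $A_2$ is automatic, as $JA_2$ is a right ideal). Applying (1) to the element $\beta f\in A_2$, this is equivalent to $(\beta f)*R\subseteq J$. Because $f\in R$ acts on $R$ by multiplication, $f*1=f$, and associativity of the action yields $(\beta f)*r=\beta*(fr)$ for all $r\in R$, so $(\beta f)*R=\beta*(fR)$. The claim then reduces to the equivalence: $\beta*(fR)\subseteq J$ for every $f\in I$ iff $\beta*I\subseteq J$, which is immediate in both directions (take $r=1$; and use $fR\subseteq I$). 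There is no serious obstacle here --- the proposition is essentially a bookkeeping consequence of the left $R$-freeness of $A_2$ on $\{\partial_x^i\partial_y^j\}$ together with faithfulness of the Weyl-algebra action on $\kk[x,y]$; the only care needed is in keeping the double induction in part (1) organised and in correctly interchanging multiplication in $A_2$ with the action $*$ on $R$ in part (2).
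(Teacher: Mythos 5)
Your proof is correct, and your treatment of part (2) is essentially the paper's own argument: the forward direction shows $(\beta a)*R\subseteq J$ and invokes (1), while the converse reduces to $(\beta a)*1\in J$. The only real difference is in part (1), which the paper simply cites from McConnell--Robson (Prop.\ 15.5.9(i)) but which you prove from scratch via the normal form $\beta=\sum g_{ij}\partial_x^i\partial_y^j$ and induction on $i+j$ using the rescaled monomials $x^iy^j/(i!j!)$; this is a sound and self-contained substitute (the rescaling is harmless since $\operatorname{char}\kk=0$), and your identification of $IA_2$ with the set of operators all of whose normal-form coefficients lie in $I$ is exactly the content of the cited result, so nothing is gained or lost beyond making the proposition independent of the reference.
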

\begin{proof}
For $(1)$, this is precisely \cite[Proposition 15.5.9 (i)]{MR}.

\noindent
For $(2)$ we start with the converse. Let $a\in I$, then $a \in IA_2$. Hence 
    $$\beta * a = (\beta a)* 1_R \in (JA_2)*1_R\subseteq JR = J.$$
Now for the forward direction, let $a \in I$ and $r \in R$. Then
$$(\beta a) *r = \beta*ar\in J$$
and hence $(\beta a)* R \subseteq J$. Thus, by $(1)$, $\beta a \in JA_2$, which implies $\beta \in (JA_2:IA_2)$.
\end{proof}
\begin{corollary}\label{idealizer-presentation}
Let $f,g \in \kk[x,y]$. Then 
$$\DD((f),(g)) = \{\theta \in A_2 \mid \theta *(f)\subseteq (g)\} = (gA_2:fA_2)$$
and setting $f=g$ we obtain
$$\If = \{\theta \in A_2\mid \theta * p\in (f) \textrm{ for all } p \in (f)\} = \DD((f),(f)).$$\qed
\end{corollary}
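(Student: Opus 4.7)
The statement is a pure bookkeeping consequence of the proposition immediately preceding it, so the plan is just to apply that proposition with $I=(f)$ and $J=(g)$ and then unpack definitions. No combinatorics or ring-theoretic machinery beyond what is already on the page is required.

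First, I would establish the equality $\DD((f),(g))=(gA_2:fA_2)$. By the definition of $\DD((f),(g))$ given in the statement, an element $\theta\in A_2$ lies in $\DD((f),(g))$ precisely when $\theta *(f)\subseteq (g)$. Applying part (2) of the preceding proposition to the principal ideals $I=(f)$ and $J=(g)$ of $\kk[x,y]$, this happens if and only if $\theta\in(gA_2:fA_2)$, which gives the desired identification.

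Next, specialising to $f=g$, the same argument yields $\DD((f),(f))=(fA_2:fA_2)$. On the other hand, by the definition of the colon ideal, $(fA_2:fA_2)=\{P\in A_2\mid P\cdot fA_2\subseteq fA_2\}$, which is exactly $\If$ by the definition recalled at the start of the subsection. Combining the two identifications we obtain $\If=\DD((f),(f))$, and by the first part of the corollary this is precisely $\{\theta\in A_2\mid \theta *p\in(f)\text{ for all }p\in(f)\}$.

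There is no real obstacle here: both identities are immediate rewrites of the preceding proposition, and the only thing to be careful about is not to confuse the two notations for the $A_2$-action, namely the left multiplication $P\cdot fA_2\subseteq fA_2$ used in the definition of $\If$ and the module action $\theta *p\in(f)$ used on the right-hand side, which are reconciled via part (1) of the preceding proposition.
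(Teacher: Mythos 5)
Your proposal is correct and matches the paper's intent: the corollary is stated with a \qed precisely because it is an immediate application of part (2) of the preceding proposition with $I=(f)$, $J=(g)$ (and $f=g$ for the idealizer identity), together with the definitions of the colon ideal and of $\If$. Nothing further is needed.
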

We have the following result about the noetherianity of idealizers:
\begin{proposition}\emph{\cite[Proposition 2.1]{Rog}}\label{homs}
Let $I$ be a right ideal in a noetherian domain $B$. Let $R = \mathbb{I}_B(I)$. Then the following are equivalent:
\begin{enumerate}
    \item[(1)] $R$ is right noetherian.
    \item[(2)] For every right ideal $J\supseteq I$ of $B$, $\Hom_B(B/I,B/J)$ is a right noetherian $R$-module (or $R/I$-module).
\end{enumerate}

\end{proposition}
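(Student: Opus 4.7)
Plan: By Proposition~\ref{homsquotient}, $\Hom_B(B/I,B/J) \cong (J:I)/J$ as right $R$-modules, and since $(J:I)\cdot I \subseteq J$ by definition of the colon, this module is killed by $I$ on the right, so is naturally a right $R/I$-module. Moreover, $R\cdot I \subseteq I \subseteq J$ gives the inclusion $R \subseteq (J:I)$, which yields a canonical map $R/(R\cap J) \to (J:I)/J$ of right $R$-modules. These two observations are the backbone of both implications.

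For the direction $(1)\Rightarrow(2)$: if $R$ is right noetherian then so is the quotient ring $R/I$, and every finitely generated right $R/I$-module is then noetherian. It therefore suffices to show $(J:I)/J$ is finitely generated as a right $R$-module. I would produce generators by first invoking right $B$-noetherianity: the right $B$-ideal $(J:I)B$ is finitely generated, with generators $x_1,\dots,x_n$ that may be chosen inside $(J:I)$. Then for any $x\in(J:I)$, writing $x=\sum x_i b_i$ with $b_i \in B$, one would argue that the $b_i$ can be replaced by elements of $R$ modulo $J$, using the colon condition $xI\subseteq J$ together with $x_i I \subseteq J$ to absorb the discrepancy into $J$. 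The technical heart of this direction is precisely this lifting step.

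For the direction $(2)\Rightarrow(1)$: given an ascending chain $K_1\subseteq K_2\subseteq \cdots$ of right ideals of $R$, I would first stabilize the corresponding chain $K_nB$ in $B$ by right $B$-noetherianity, passing to a tail where $K_nB = L$ is fixed. After reducing — using the $J=I$ case of (2), which supplies noetherianity of $R/I$, to stabilize $(K_n+I)/I$ — to the situation $K_1 \supseteq I$, one has $L\supseteq I$ and $K_n \subseteq R\cap L \subseteq (L:I)$. The images of $K_n$ in $(L:I)/L$ form an ascending chain of right $R$-submodules, which stabilizes by hypothesis (2) applied to the right $B$-ideal $L$. An iteration of this step on the remaining piece $(L\cap R)/K_N$, using (2) for suitable $B$-ideals containing it, pins down the original chain.

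The main obstacle is the finite-generation step in $(1)\Rightarrow(2)$: because $(J:I)$ is only a right $R$-submodule of $B$ and not a right $B$-ideal, right $B$-noetherianity of $(J:I)B$ does not directly give finite generation of $(J:I)$ as an $R$-module. The colon condition $(J:I)\cdot I\subseteq J$ must be used essentially to lift $B$-generators of $(J:I)B$ to $R$-generators of $(J:I)/J$, and this interplay between the two ring structures is the delicate point of the proof.
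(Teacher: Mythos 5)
You are attempting to prove a result that the paper itself does not prove but quotes from \cite[Proposition 2.1]{Rog}, so there is no in-paper argument to compare against; judged on its own, your sketch sets up the right framework (the identification of $\Hom_B(B/I,B/J)$ with $(J:I)/J$ via Proposition \ref{homsquotient}, the $R/I$-module structure, the inclusion $R\subseteq (J:I)$) but neither implication is actually established, and the concrete steps you do give break down. For $(2)\Rightarrow(1)$: once you pass to a tail of the chain with $K_nB=L$, every $K_n$ in that tail satisfies $K_n\subseteq K_nB=L$ (as $1\in B$), so the images of the $K_n$ in $(L:I)/L$ are all zero; applying hypothesis (2) to $J=L$ therefore yields no information whatsoever about the chain. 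Moreover the proposed ``reduction to $K_1\supseteq I$'' is both unjustified and self-defeating: if all $K_n\supseteq I$, the chain is determined by its image in $R/I\cong\Hom_B(B/I,B/I)$ and stabilizes already by the $J=I$ case of (2), so nothing remains to prove; the entire content of the implication is the case this reduction discards, namely ACC for right $R$-submodules of $I$ (equivalently chains with $K_n\not\supseteq I$), and stabilizing $(K_n+I)/I$ does not touch it because one still has to stabilize $K_n\cap I$. The closing sentence ``an iteration of this step \dots pins down the original chain'' is not an argument; it is exactly the missing proof.

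For $(1)\Rightarrow(2)$ the proposed mechanism cannot work as described. Since $I\subseteq J$ one always has $1\in (J:I)$, hence $(J:I)B=B$ and $x_1=1$ is a legitimate choice of generator lying in $(J:I)$; your lifting step would then assert that every $x\in(J:I)$ is congruent modulo $J$ to an element of $R$, i.e. $(J:I)=R+J$ for \emph{every} right ideal $J\supseteq I$. That identity is a substantive theorem valid only under strong hypotheses --- it is precisely McCaffrey's Proposition \ref{strucM}, proved when $B/Q$ is regular, and in this paper it is invoked only after localizing away from the singular points, the whole point of the torsion-case analysis being that no such identity is available there --- so it cannot drop out of the formal manipulation with $B$-module generators of $(J:I)B$. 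A related warning sign is that your argument for this direction never uses the right noetherianity of $R$ except to upgrade finite generation to noetherianity, whereas hypothesis (1) must enter essentially. Both places you flag as ``delicate'' are in fact where all the work of the cited result lies, so the proposal as it stands is not a proof of either implication.
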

We next summarise the work of McCaffrey on idealizers at nonsingular curves \cite{Mccaffrey}.
\begin{theorem}\label{McCfnoeth}\emph{[McCaffrey]}
Let $B$ be a regular $\kk$-affine domain and $Q$ a prime ideal in $B$ such that $B/Q$ is regular. Then $\mathbb{I}_{\DD(B)}(Q\DD(B))$ is right noetherian.
\end{theorem}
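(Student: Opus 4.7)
The natural approach is via Proposition \ref{homs}: writing $R := \mathbb{I}_{\DD(B)}(Q\DD(B))$, it suffices to show that for every right ideal $J \supseteq Q\DD(B)$ of $\DD(B)$, the right $R$-module
\[
M_J \;:=\; \Hom_{\DD(B)}\bigl(\DD(B)/Q\DD(B),\DD(B)/J\bigr) \;\cong\; (J:Q\DD(B))/J
\]
is noetherian. Since $Q\DD(B)$ lies in the right annihilator of $M_J$, the $R$-action factors through $R/Q\DD(B)$.

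The first step is to identify the action ring. By Proposition \ref{idealizer-diffops}, applicable precisely because $B/Q$ is regular, one has $R/Q\DD(B) \cong \DD(B/Q)$, and this ring is a finitely generated noetherian $\kk$-algebra by the standard facts on differential operators over smooth varieties collected in Section \ref{smoothfacts}. The problem then reduces to showing that $M_J$ is finitely generated as a right $\DD(B/Q)$-module.

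To exhibit this finite generation, I would exploit the local structure of the smooth closed immersion $\Spec(B/Q)\hookrightarrow\Spec(B)$. Since both $B$ and $B/Q$ are regular, $Q$ is locally generated by a regular sequence $f_1,\dots,f_k$ (of length equal to the codimension), which can be completed to a regular system of parameters. In such local coordinates, $\DD(B)$ presents as an iterated Ore extension of $\DD(B/Q)$ by the normal variables $f_i$ and their derivations $\partial/\partial f_i$, in the spirit of the factorisation used in the proof of Theorem \ref{Resco}. In this local picture one can describe $M_J$ directly as $\{\bar d\in\DD(B)/J : \bar d f_i \in J \text{ for all } i\}$ and exhibit it as a subquotient of the cyclic noetherian right $\DD(B)$-module $\DD(B)/J$ whose generators are controlled by the tangential factor $\DD(B/Q)$.

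The main obstacle is promoting this local computation to a global finite generation statement for $M_J$ over $\DD(B/Q)$. Conceptually this is an instance of Kashiwara's equivalence between right $\DD(B)$-modules supported on $V(Q)$ and right $\DD(B/Q)$-modules: the $Q$-torsion submodule $M_J$ of the cyclic noetherian $\DD(B)$-module $\DD(B)/J$ should correspond to a finitely generated right $\DD(B/Q)$-module, which by noetherianity of $\DD(B/Q)$ yields condition $(2)$ of Proposition \ref{homs} and hence the theorem. The technical heart of the argument is making this correspondence explicit---for example by showing that every element of $(J:Q\DD(B))$ differs from one of $R$ by an element of $J$, so that $M_J = (R+J)/J$ is a cyclic $R/Q\DD(B)$-module---and tracking how finite generation is preserved. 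This is where the hypothesis that $B/Q$ be regular is used most crucially, since it is what makes the required idealizer-lifts of $Q$-torsion elements available.
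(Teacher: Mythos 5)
You should first note that the paper itself gives no proof of Theorem \ref{McCfnoeth}: it is quoted from McCaffrey's work, and the only internal trace of his method is the pair of structural statements recorded as Propositions \ref{strucM} and \ref{strictc}, which say that $(J:Q\DD(B)) = \mathbb{I}_{\DD(B)}(Q\DD(B)) + J$, so that each $\Hom_{\DD(B)}(\DD(B)/Q\DD(B),\DD(B)/J)$ is actually a \emph{cyclic} module over $\mathbb{I}_{\DD(B)}(Q\DD(B))/Q\DD(B)\cong \DD(B/Q)$, which is noetherian. Your reduction agrees with what that route needs: Proposition \ref{homs} together with the identification of $R/Q\DD(B)$ with $\DD(B/Q)$ via Proposition \ref{idealizer-diffops} (note that this isomorphism does not use regularity of $B/Q$; that hypothesis is what makes $\DD(B/Q)$ noetherian and $V(Q)$ smooth), and the observation that $M_J$ is killed by $Q\DD(B)$ is correct.

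Your Kashiwara idea can genuinely close the key step, and in a way that is \emph{weaker} than what you propose to verify: one only needs $M_J$ finitely generated over $\DD(B/Q)$, not the cyclicity $(J:Q\DD(B))=R+J$. But as written there are two real problems. First, $M_J=\{m\in \DD(B)/J : mQ=0\}$ is not the $Q$-torsion submodule of $\DD(B)/J$, and it is not a $\DD(B)$-submodule at all, so it cannot be fed to Kashiwara's equivalence directly. The correct chain is: the $Q$-power-torsion submodule $\Gamma_{[Q]}(\DD(B)/J)$ is a finitely generated right $\DD(B)$-module (a submodule of the noetherian cyclic module $\DD(B)/J$) supported on the smooth closed subvariety $V(Q)\subseteq \Spec B$; the Kashiwara restriction functor for right modules, $M\mapsto \{m\in M : mQ=0\}$, sends it to a coherent right $\DD(B/Q)$-module, and that module is exactly $M_J$. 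If you quote Kashiwara's equivalence together with preservation of coherence, this finishes the proof and renders your ``technical heart'' unnecessary. Second, the Zariski-local claim that $\DD(B)$ is an iterated Ore extension of $\DD(B/Q)$ in the normal variables is not correct: a smooth pair splits as a product only formally or \'etale-locally, so the analogy with Theorem \ref{Resco} (where the splitting is global) cannot substitute for the Kashiwara argument and should be dropped. In summary: the skeleton is right and the Kashiwara route, made precise as above, gives a complete and arguably more conceptual proof than the explicit colon-ideal computation the paper attributes to McCaffrey; but the version you wrote defers the decisive step rather than establishing it, and the local Ore-extension picture it leans on is false as stated.
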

Further:
\begin{proposition}\label{strucM}\emph{[McCaffrey]}
Let $B$ and $Q$ satisfy the assumptions of Theorem \ref{McCfnoeth}. For $J$ any right ideal of $\DD(B)$ strictly containing $Q$, we have 
$$(J:Q\DD(B)) = \mathbb{I}_{\DD(B)}(Q\DD(B))+J$$
and further, using the identification in Proposition \ref{homsquotient},
$$\frac{(J:Q\DD(B))}{J} = \Hom_{\DD(B)}\left(\frac{\DD(B)}{Q\DD(B)},\frac{\DD(B)}{J}\right)\cong \frac{\mathbb{I}_{\DD(B)}(Q\DD(B))+J}{J}.$$
\end{proposition}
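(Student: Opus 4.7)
The inclusion $(J:Q\DD(B))\supseteq \mathbb{I}_{\DD(B)}(Q\DD(B))+J$ is immediate from the definitions: for $r\in\mathbb{I}_{\DD(B)}(Q\DD(B))$ one has $r\cdot Q\DD(B)\subseteq Q\DD(B)\subseteq J$, and for $j\in J$ one has $j\cdot Q\DD(B)\subseteq J$ since $J$ is a right ideal. Once the reverse inclusion is established, the displayed isomorphism with $\Hom_{\DD(B)}(\DD(B)/Q\DD(B),\DD(B)/J)$ follows from Proposition \ref{homsquotient}.

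For the reverse inclusion, Proposition \ref{homsquotient} rephrases the task as showing that every right $\DD(B)$-module map $\phi:\DD(B)/Q\DD(B)\to \DD(B)/J$ is induced by left multiplication by an element of $\mathbb{I}_{\DD(B)}(Q\DD(B))$; equivalently, every $\theta\in(J:Q\DD(B))$ is congruent modulo $J$ to an element of $\mathbb{I}_{\DD(B)}(Q\DD(B))$. The plan is to leverage regularity and pass to local coordinates: around any point of $\Spec(B/Q)$, choose a regular system of parameters $x_1,\dots,x_n$ of $B$ with $Q=(x_1,\dots,x_k)$, and set $\partial_i=\partial/\partial x_i$. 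Then $\DD(B)$ is locally a free left $B$-module on the monomials $\{\partial^\beta:\beta\in\NN^n\}$, and a direct Leibniz-rule computation identifies
$$\mathbb{I}_{\DD(B)}(Q\DD(B)) = \Bigl\{\sum_\beta p_\beta \partial^\beta \;:\; p_\beta \in Q \text{ whenever } \beta_i>0 \text{ for some } i\le k\Bigr\},$$
so that, in agreement with Proposition \ref{idealizer-diffops}, its image in $\DD(B)/Q\DD(B)$ is the subring $(B/Q)\langle \partial_{k+1},\dots,\partial_n\rangle\cong \DD(B/Q)$.

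Given $\theta\in(J:Q\DD(B))$ with local expansion $\theta=\sum_\beta p_\beta \partial^\beta$, the goal is to modify $\theta$ by an element of $J$ so that every coefficient $p_\beta$ with $\beta_i>0$ for some $i\le k$ lies in $Q$. I would proceed by induction on $|\beta|$, using the identity $\theta\cdot x_i=x_i\theta+\sum_\beta \beta_i p_\beta \partial^{\beta-e_i}\in J$ (valid for each $i\le k$) to derive constraints modulo $J$ on the offending coefficients and to force the leading ones into $Q+J$, allowing the induction to descend.

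The main obstacle is that the right ideal $J$ need not respect the monomial basis $\{\partial^\beta\}$, unlike $Q\DD(B)$ itself, so the reduction step must carefully exploit the full hypothesis $\theta\cdot Q\DD(B)\subseteq J$ (and not merely the commutator condition $[\theta,x_i]\in J$) at each inductive step. Globalizing the local analysis should then follow by patching over a suitable affine cover of $\Spec(B/Q)$, using the good behaviour of $\DD$ under Zariski localization recorded in Lemma \ref{LcDiffs}.
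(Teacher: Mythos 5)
The paper itself states this proposition without proof (it is quoted from McCaffrey), so your argument has to stand on its own, and as written it does not: the easy inclusion $(J:Q\DD(B))\supseteq \mathbb{I}_{\DD(B)}(Q\DD(B))+J$ and the passage through Proposition \ref{homsquotient} are fine, but the substance of the proposition is the reverse inclusion, and for that you only offer a plan whose key step fails as described. The hypothesis $\theta\in(J:Q\DD(B))$ gives you $\theta x_i\in J$ for $i\le k$; you then rewrite $\theta x_i = x_i\theta + [\theta,x_i]$ and propose to read off ``constraints modulo $J$'' on the coefficients of $[\theta,x_i]$. But $J$ is only a \emph{right} ideal, so you have no control over $x_i\theta$ modulo $J$, and hence the identity yields no information about $[\theta,x_i]$ mod $J$ at all. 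More generally, the goal of forcing individual coefficients $p_\beta$ into $Q+J$ is not a well-posed reduction, since $J$ carries no compatible monomial or coefficient structure -- you name this as ``the main obstacle,'' but naming it is not overcoming it, and it is precisely the point where a proof is needed (the local description of the idealizer you give is correct but is the easy part, essentially Proposition \ref{idealizer-diffops}).

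The globalization step is also unsupported. Checking membership of $\theta$ in a subset of $\DD(B)$ Zariski-locally is legitimate only when that subset is a $B$-submodule on the appropriate side; $\mathbb{I}_{\DD(B)}(Q\DD(B))$ is a left $B$-module, but $J$, and hence $\mathbb{I}_{\DD(B)}(Q\DD(B))+J$, is not, so establishing $\theta\in \mathbb{I}_{\DD(B_g)}(Q\DD(B_g))+J\DD(B_g)$ on a cover does not give $\theta\in \mathbb{I}_{\DD(B)}(Q\DD(B))+J$; moreover $J\DD(B_g)\cap \DD(B)$ can be strictly larger than $J$, so the local statements would in any case concern the wrong ideal. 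Lemma \ref{LcDiffs} tells you that $\DD$ localizes well, not that congruences modulo an arbitrary right ideal do. To complete the argument you would need either a genuinely global mechanism (for instance exploiting that $\mathbb{I}_{\DD(B)}(Q\DD(B))/Q\DD(B)\cong\DD(B/Q)$ is a simple ring and that $J\cap\mathbb{I}_{\DD(B)}(Q\DD(B))$ strictly contains $Q\DD(B)$, as in Proposition \ref{strictc}) or a reduction step that uses the full hypothesis $\theta\,Q\DD(B)\subseteq J$ in a way compatible with $J$ being merely a right ideal; neither is present, so the proposal has a genuine gap at the heart of the proposition.
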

\begin{proposition}\label{strictc}\emph{[McCaffrey]}
Let $B$ and $Q$ satisfy the assumptions of Theorem \ref{McCfnoeth}. If $J$ is any right ideal of $\DD(B)$ strictly containing $Q$, then $$Q\subsetneqq J\cap \mathbb{I}_{\DD(B)}(Q\DD(B)).$$
\end{proposition}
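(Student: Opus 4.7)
Reading the statement with $Q$ as shorthand for $Q\DD(B)$ throughout, my plan is to prove the proposition by induction on the order of a differential operator: starting from any $\theta \in J \setminus Q\DD(B)$, I iteratively replace it by a commutator with a suitable element of $Q$, producing a sequence of elements in $J \setminus Q\DD(B)$ whose orders strictly decrease, with the order-zero case handled by elementary means.

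Three observations set up the induction. First, $B \subseteq \mathbb{I}_{\DD(B)}(Q\DD(B))$, since for $b \in B$ one has $b \cdot Q\DD(B) = (bQ)\DD(B) \subseteq Q\DD(B)$. Second, $B \cap Q\DD(B) = Q$, verified by applying any element of $Q\DD(B)$ to $1 \in B$ and observing that the output lies in $QB = Q$. These combine to handle the base case (order zero): any element of $B \cap J$ not in $Q\DD(B)$ automatically lies in $\mathbb{I}_{\DD(B)}(Q\DD(B)) \setminus Q\DD(B)$. Third, $\theta \in \mathbb{I}_{\DD(B)}(Q\DD(B))$ if and only if $\theta Q \subseteq Q\DD(B)$; the nontrivial direction uses that $Q\DD(B)$ is a right ideal, so $(\theta Q)\DD(B) \subseteq Q\DD(B)\cdot \DD(B) = Q\DD(B)$. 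Equivalently, if $\theta \notin \mathbb{I}_{\DD(B)}(Q\DD(B))$, then some $q \in Q$ satisfies $\theta q \notin Q\DD(B)$.

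The inductive step then follows easily. Given $\theta \in J \setminus Q\DD(B)$ of positive order: if $\theta$ already lies in $\mathbb{I}_{\DD(B)}(Q\DD(B))$ we are done; otherwise pick $q \in Q$ with $\theta q \notin Q\DD(B)$ and consider $[\theta, q] = \theta q - q\theta$. This commutator has order strictly smaller than $\theta$; it lies in $J$ because $\theta q \in J$ (since $J$ is a right ideal) and $q\theta \in Q\DD(B) \subseteq J$; and it does not lie in $Q\DD(B)$ because $q\theta \in Q\DD(B)$ while $\theta q \notin Q\DD(B)$. Applying the inductive hypothesis to $[\theta, q]$ then completes the proof. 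No substantial obstacle arises; the key insight is to take the commutator with an element of $Q$ specifically, since this is what makes $q\theta$ fall into $J$ (via $Q\DD(B) \subseteq J$), so that $[\theta, q]$ remains in $J$ as the order drops.
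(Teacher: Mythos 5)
Your argument is correct, and every step checks out: $B\subseteq \mathbb{I}_{\DD(B)}(Q\DD(B))$ and $B\cap Q\DD(B)=Q$ are verified exactly as you say, the equivalence $\theta\in\mathbb{I}_{\DD(B)}(Q\DD(B))\iff \theta Q\subseteq Q\DD(B)$ uses only that $Q\DD(B)$ is a right ideal, and in the inductive step $[\theta,q]$ is automatically nonzero (it avoids $Q\DD(B)$, which contains $0$), lies in $J\setminus Q\DD(B)$, and has strictly smaller order by the inductive definition of differential operators (Definition \ref{D-modules}), so the induction terminates at the order-zero case where elements of $B$ already idealize. Note, however, that the paper gives no proof of this proposition at all: it is quoted from McCaffrey, whose framework is built on the regularity hypotheses on $B$ and $B/Q$ (locally, a smooth curve looks like the affine line). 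Your commutator-and-order-filtration argument bypasses that machinery entirely and never invokes regularity of $B$ or of $B/Q$, so it actually proves the stronger statement that for \emph{any} ideal $Q$ of \emph{any} commutative $\kk$-algebra $B$ and any right ideal $J\supsetneqq Q\DD(B)$ one has $Q\DD(B)\subsetneqq J\cap\mathbb{I}_{\DD(B)}(Q\DD(B))$; this is a more elementary and more general route than the cited source, and it also makes transparent why the analogous fact can be applied after localisation in Proposition \ref{TFFG}. The only cosmetic point is to state the induction cleanly (strong induction on the order of some $\theta\in J\setminus Q\DD(B)$, producing an element of $(J\cap\mathbb{I}_{\DD(B)}(Q\DD(B)))\setminus Q\DD(B)$), which you in effect do.
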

\subsection{Hereditary Rings}
As observed earlier, hereditary rings have particularly nice properties, two of which we detail below.
\begin{lemma}\label{LHisH}
Let $S$ be a localisation of a hereditary ring. If $S$ is not semisimple, then it is also hereditary.
\end{lemma}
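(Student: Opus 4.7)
The plan is to exploit the fact that right global dimension cannot increase under Ore localisation, together with the characterisation of hereditary rings as those of global dimension at most $1$ and semisimple rings as those of global dimension $0$. Writing $S = T^{-1}R$ for some right Ore set $T$ in the hereditary ring $R$, the goal is to establish that $\gdim(S)\leq 1$; then the hypothesis that $S$ is not semisimple, i.e.\ $\gdim(S)\neq 0$, will force $\gdim(S)=1$, which is the definition of hereditary being used here.

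To bound the global dimension, I would show directly that every right ideal $J$ of $S$ is projective over $S$. Set $I = J\cap R$, which is a right ideal of $R$. Since $R$ is hereditary, $I$ is a projective right $R$-module. Now $S$ is a flat right $R$-module, and under Ore localisation every right ideal of $S$ is the extension of its contraction, giving the identifications
\[
J \;=\; I\cdot S \;\cong\; I\otimes_R S.
\]
Extension of scalars along the ring map $R\to S$ sends projective $R$-modules to projective $S$-modules, so $J$ is projective over $S$. This shows $\gdim(S)\leq 1$.

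Combining these two observations gives the lemma: localising preserves the bound $\gdim\leq 1$, and excluding the semisimple case then pins the global dimension at exactly $1$. The only point requiring any genuine content is the identification $J = (J\cap R)\otimes_R S$ for Ore localisations, which is standard (and for central/commutative localisations is entirely routine), so there is no real obstacle; the lemma is essentially a packaging of classical facts about projective dimension under flat base change.
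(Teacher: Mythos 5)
Your proof is correct and takes essentially the same route as the paper: the paper disposes of the lemma in one line by citing the standard fact that the global dimension of an Ore localisation is bounded above by that of the original ring (McConnell--Robson, 7.4.3), while you reprove that bound directly in the hereditary case via the identification $J=(J\cap R)\otimes_R S$ for a right ideal $J$ of $S$ and the preservation of projectivity under the flat base change $R\to S$. The only points to watch are routine bookkeeping (for right ideals one wants right fractions, i.e.\ the localisation $RT^{-1}$, and if $R\to S$ is not injective one contracts to the preimage of $J$ rather than $J\cap R$), which do not affect the argument.
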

\begin{proof}
We simply note that the global dimension of the localisation is bounded above by that of the original ring \cite[7.4.3]{MR}, namely $1$.
\end{proof}
We have the following result which shows that simple modules over localisations of HNP rings are very well behaved. 
\begin{proposition}\label{GOOD}
Let $R$ be an HNP ring and let $S$ a right denominator set such that $RS^{-1}$ is not the full Goldie quotient ring of $R$. Let $M$ be a simple right $RS^{-1}$-module. Then there exists a simple right  $R$-module, $N$, such that
$$M \cong N\otimes_{R}RS^{-1}.$$

\end{proposition}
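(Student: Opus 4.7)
The plan is to realise $N$ as a simple $R$-submodule of $M$ itself. To see this suffices, note that any nonzero $R$-submodule $N\subseteq M$ is automatically $S$-torsion-free (since $M$ is an $RS^{-1}$-module), and the inclusion $N\hookrightarrow M$ localises, via the exact functor $-\otimes_R RS^{-1}$, to an injection $N\otimes_R RS^{-1}\hookrightarrow M\otimes_R RS^{-1}\cong M$; the image is a nonzero $RS^{-1}$-submodule of $M$, and by the simplicity of $M$ over $RS^{-1}$ it must be all of $M$, giving $N\otimes_R RS^{-1}\cong M$ as required.

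To produce a simple $R$-submodule of $M$, pick any nonzero $m\in M$ and let $J=\ann_R(m)$, so that $mR\cong R/J$. I aim to show that $R/J$ has finite length, in which case its socle supplies a simple $R$-submodule of $M$. Since finitely generated Goldie-torsion modules over an HNP ring have finite length, the whole question reduces to showing that $J$ contains a regular element of $R$.

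To this end, observe that $\mathfrak{n}:=\ann_{RS^{-1}}(m)$ is a maximal right ideal of $RS^{-1}$, because $mRS^{-1}=M$ is simple, and $J=\mathfrak{n}\cap R$. In the case of interest $R$ is a domain (for instance $R=\DD(\NX)$), so $RS^{-1}$ is a domain and is uniform as a right module over itself. The hypothesis that $RS^{-1}$ is not the full Goldie quotient ring of $R$ rules out $RS^{-1}$ being a division ring---otherwise every nonzero element of $R$ would be invertible in $RS^{-1}$, forcing $RS^{-1}$ to contain the full quotient ring---so $\mathfrak{n}\neq 0$, and hence $\mathfrak{n}$ is essential in $RS^{-1}$. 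Goldie's theorem then yields a regular element $\alpha=rs^{-1}\in\mathfrak{n}$, whereupon $r=\alpha s$ lies in $\mathfrak{n}\cap R=J$ and is nonzero, so regular in the domain $R$.

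The main obstacle is precisely this last step---producing a regular element of $R$ inside $J$---and it is here that the hypothesis excluding the full Goldie quotient ring is used in an essential way. Without it, $M$ could be the unique simple module over the simple Artinian Goldie quotient of $R$, which as an $R$-module need not contain any simple $R$-submodule at all (for example, $\QQ$ has no simple $\ZZ$-submodule), and the strategy would genuinely break down.
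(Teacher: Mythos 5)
Your overall strategy is the same as the paper's: produce a simple $R$-submodule $N$ of $M$ by showing that a cyclic submodule $mR\cong R/\ann_R(m)$ has finite length, and then use flatness of $RS^{-1}$ together with simplicity of $M$ to get $N\otimes_R RS^{-1}\cong M$. That closing step, and the reduction to finding a regular element of $R$ inside $\ann_R(m)$ (equivalently, showing $mR$ is Goldie torsion, hence of finite length over the HNP ring $R$), are fine and match the paper.

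The gap is in how you certify that regular element: you explicitly assume $R$ is a domain (``in the case of interest $R=\DD(\NX)$''), so that $RS^{-1}$ is a uniform right module over itself and the nonzero maximal right ideal $\mathfrak{n}=\ann_{RS^{-1}}(m)$ is automatically essential. The proposition, however, is stated for an arbitrary HNP ring, and HNP rings need not be domains (e.g.\ matrix rings over Dedekind prime rings), so as written your argument proves a weaker statement than the one claimed; it happens to suffice for the paper's application, where the proposition is invoked with $R=\DD(\NX)$, but not for the proposition itself. The paper avoids the domain hypothesis by arguing with the Goldie torsion/torsionfree dichotomy: if the simple module $M$ were torsionfree over $T=RS^{-1}$, then $T$ would have nonzero socle and hence be semisimple, which would force $T$ to be the full Goldie quotient ring, contradicting the hypothesis; hence $M$ is torsion over $T$ and over $R$, and annihilators of its elements are essential. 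Your route can be repaired without the domain assumption by the same kind of observation: in a prime Noetherian ring that is not simple artinian, every maximal right ideal is essential (a non-essential one would be a direct summand, producing a nonzero socle and forcing the ring to be simple artinian), and $T$ simple artinian is exactly what the hypothesis $T\neq Q(R)$ excludes, since regular elements of $R$ stay regular in $T\subseteq Q(R)$ and are invertible in an artinian ring. With that adjustment (and the standing assumption, implicit in comparing $RS^{-1}$ with $Q(R)$, that $S$ consists of regular elements so $R$ embeds in $RS^{-1}$), your extraction of $r=\alpha s\in\mathfrak{n}\cap R$ regular goes through in the stated generality.
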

\begin{proof}
Let $T \coloneqq RS^{-1}$. Note that $M_T$ is either Goldie torsion or Goldie torsionfree. If $M$ were Goldie torsionfree then, by \cite[Lemma 7.17]{GW}, $T$ would have nonzero socle and hence, by \cite[Theorem 7.15]{GW}, $T$ is semisimple; this is impossible. Therefore $M_T$ is (Goldie) torsion and thus so is $M_R$. Consider $mR \leq M$ where $m\in M$ is nonzero. Since $m$ is torsion, $\text{r.ann}_R(mR)\neq 0$, and so $mR \cong R/\text{r.ann}_R(mR)$ has finite length as $R$ is HNP \cite[Lemma 6.2.8]{MR}. Hence $ mR \subseteq M$ contains a simple right $R$-module $N$. Then note that
$$N\otimes_R T \cong NT \subseteq M$$
as $T$ is flat over $R$. By the simplicity of $M$, $NT = M$.
\end{proof}

\section{\texorpdfstring{$\If$}{The idealizer} is Noetherian}
Let $f\in \kk[x,y]$ satisfy the hypotheses of Theorem \ref{fdeets}. In this section, we prove that $\If$ is noetherian on both sides. We start by showing that it is enough to prove that $\If$ is right noetherian, and then we show that we may check two separate cases determined by certain torsion properties.

We begin by setting up notation.
\begin{notation}\label{NOT}
Let $X = \mathbb{V}(f)$ for $f\in \kk[x,y]$ be a curve such that the normalisation map $\phi: \widetilde{X}\to X$ is injective . Also, let $D \coloneqq \DD(X)$ denote the ring of differential operators on $X$. Then, by \cite[Theorem 3.4]{SMST} $D$ is Morita equivalent to $A\coloneqq \DD(\widetilde{X})$, that is to say
$$D \cong \End_{A}(P)$$
where $P = \DD(\NX,X)$ is a $(D,A)$-bimodule which is finitely generated and projective on both sides.
\begin{comment}Let $V$ be a finite generating set for $P$ both as a left $D$-module, and as a right $A$-module i.e.\  $P = VA = DV$ where $\dim V = r< \infty$.
\end{comment}

We may form the right ideal $fA_2 \leq_{r} A_2$ and we denote \emph{the idealizer defined by $f$ in $A_2$} by $\If$.
\end{notation}
We begin this study on the noetherianity of $\If$ by proving a short result which allows us to only consider right noetherianity.
\begin{proposition}\label{LN}
To prove Theorem \ref{fdeets} it suffices to prove the right noetherian statement.
\end{proposition}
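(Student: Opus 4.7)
The plan is to construct a ring anti-automorphism of $\If$, which would immediately give $\If \cong \If^{\mathrm{op}}$ and force the equivalence of right and left noetherianity. The strategy combines two natural maps on $A_2$: the transpose anti-involution $\tau \colon A_2 \to A_2$ fixing $\kk[x,y]$ pointwise and sending $\partial_x \mapsto -\partial_x$, $\partial_y \mapsto -\partial_y$; and a ``conjugation by $f$'' isomorphism. Neither map preserves $\If$ setwise, but both identify it with the same auxiliary subring, namely the ``left idealizer''
\[
  \mathbb{I}_{A_2}^L(A_2 f) := \{Q \in A_2 : fQ \in A_2 f\}
\]
of the left ideal $A_2 f$.

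First I would define $\phi \colon \If \to A_2$ by $\phi(P) = Q$, where $Q \in A_2$ is uniquely determined by $Pf = fQ$; existence comes from $Pf \in fA_2$ (the defining condition of $\If$) and uniqueness from $A_2$ being a domain. The identity $(P_1 P_2)f = P_1(fQ_2) = (P_1 f)Q_2 = fQ_1 Q_2$ shows $\phi$ is multiplicative, additivity is clear, and injectivity follows from $Pf = 0 \Rightarrow P = 0$. The image is precisely $\mathbb{I}_{A_2}^L(A_2 f)$: on one hand $fQ = Pf \in A_2 f$ for every $Q = \phi(P)$, and conversely any $Q$ with $fQ \in A_2 f$ produces a unique $P \in \If$ via $fQ = Pf$. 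Thus $\phi$ is a ring isomorphism onto $\mathbb{I}_{A_2}^L(A_2 f)$. Independently, because $\tau$ fixes $f$ and satisfies $\tau(fA_2) = A_2 f$, one checks that $P \in \If$ iff $f\tau(P) = \tau(Pf) \in A_2 f$ iff $\tau(P) \in \mathbb{I}_{A_2}^L(A_2 f)$, so $\tau$ restricts to a ring anti-isomorphism $\If \to \mathbb{I}_{A_2}^L(A_2 f)$, with inverse also provided by the restriction of $\tau$.

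Composing, $\sigma := \tau \circ \phi \colon \If \to \If$ is a ring anti-automorphism of $\If$, being an isomorphism followed by an anti-isomorphism. Hence $\If \cong \If^{\mathrm{op}}$ as rings, so right noetherianity is equivalent to left noetherianity, which is exactly what we wish to show. The only substantive step is the identification $\phi(\If) = \mathbb{I}_{A_2}^L(A_2 f)$, a short direct calculation exploiting that $A_2$ is a domain; no serious obstacle is anticipated.
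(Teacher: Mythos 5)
Your argument is correct, and it diverges from the paper's proof at the key step. Both you and the paper begin with the same identification: conjugation by $f$ (your map $\phi$, the paper's $\theta \mapsto f^{-1}\theta f$) gives a ring isomorphism $\If \cong \mathbb{I}_{A_2}(A_2f)$, the idealizer of the left ideal $A_2f$. The paper then stops there and argues that, since the hypotheses on the singularities of $X$ are left--right symmetric, one can run a mirror-image (``left-handed'') version of the entire right-noetherianity proof to conclude that this left idealizer, and hence $\If$, is left noetherian; this implicitly requires checking that every ingredient of the main argument has a left-sided analogue. You instead compose $\phi$ with the transpose anti-involution $\tau$ of $A_2$ (which fixes $\kk[x,y]$, hence $f$, and sends $fA_2$ to $A_2f$) to produce an explicit anti-automorphism $\sigma = \tau\circ\phi$ of $\If$, so that $\If \cong \If^{\mathrm{op}}$ and left noetherianity of $\If$ is literally equivalent to its right noetherianity. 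Your route is the more economical one: it genuinely reduces the theorem to the right-noetherian statement for the same ring and the same $f$, with no need to re-run the proof on the left, whereas the paper's symmetry argument is shorter to state but defers the burden to mirroring the whole later argument. The computations you flag (well-definedness, multiplicativity and surjectivity of $\phi$ onto $\{Q \in A_2 : fQ \in A_2f\}$, and that $\tau$ swaps the two idealizers) are all straightforward and correct as you describe them, using only that $A_2$ is a domain and that $\tau$ is an involutive anti-automorphism fixing $f$.
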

\begin{proof}
Suppose that we have proved that $\If$ is right noetherian for all $f \in \kk[x,y]$ such that $X = \mathbb{V}(f)$ satisfies the hypothesis of Theorem \ref{fdeets}. We show $fA_2f^{-1}\cap A_2 = \If $. Indeed,
%, we first note that $ fA_2f^{-1}\cap A_2 \subseteq \If $. Also,
\begin{align*}
     \If &= \{\theta \in A_2 \mid \theta fA_2 \subseteq fA_2\}\\
     & = \{\theta \in A_2 \mid f^{-1}\theta fA_2 \subseteq A_2\} \\
     & = \{\theta \in A_2 \mid \theta \subseteq fA_2f^{-1}\}\\
     & = A_2 \cap fA_2f^{-1}.
\end{align*}
Then,
$$\If \cong f^{-1}\If f = A_2 \cap f^{-1}A_2 f = \mathbb{I}_{A_2}(A_2f),$$
by symmetry. Now note that the hypotheses on the singularities of $X$ is left-right symmetric. Hence, by using a left-handed version of the proof of right noetherianity, $\If$ is left noetherian as required.
\end{proof}
By Propositon \ref{idealizer-diffops}, we have the following identification:
$$D=\DD(X) \cong \If/fA_2, $$
and we use this without reference throughout.
Proposition \ref{homs} states that $\If$ is right noetherian if and only if $\Hom_{A_2}(A_2/fA_2,A_2/J)$ is a finitely generated right $\If/fA_2 \cong D$-module for all right ideals $J\leq A_2$ which contain $fA_2$, we will also use this without reference throughout.

We start by showing that $A_2/IA_2$ is $1$-critical (in terms of (Gabriel-Rentschler) Krull dimension), where $I$ denotes any height $1$ prime ideal in $\kk[x,y]$.
\subsection{\texorpdfstring{$A_2/IA_2$}{A2/IA2} is 1-critical for all Height 1 Prime Ideals of \texorpdfstring{$\kk[x,y]$}{k[x,y]}}
In this section we show that $A_2/IA_2$ is $\kdim1$-critical for all height $1$ primes $I\lhd \kk[x,y]$, which we note are principal as $\kk[x,y]$ is a UFD. We do this in order to reduce to considering maximal right ideals $J\leq_r A_2$ which strictly contain $fA_2$.
\begin{theorem}\label{1critical}
Let $I$ be a height $1$ prime ideal in $\kk[x,y]$. Then $A_2/IA_2$ is $1$-critical.
\end{theorem}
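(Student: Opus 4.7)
The plan is to check the two characterising properties of $1$-criticality directly: that $A_2/IA_2$ is not Artinian, and that every proper quotient $A_2/J$ with $J \supsetneq IA_2$ is Artinian. Since $\kk[x,y]$ is a UFD we may write $I = (f)$ for some irreducible $f$, and then $IA_2 = fA_2$ is a free right $A_2$-module on the generator $f$.

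For the upper bound on Krull dimension, given $J \supsetneq fA_2$, I aim to prove that $A_2/J$ is holonomic and hence of finite length. The plan is to pass to the order filtration $F_\bullet A_2$, so that $\gr A_2 \cong \kk[x,y,\xi,\eta]$ is a polynomial ring and $\gr(fA_2) = (f)\cdot\gr A_2$. Choose $\theta \in J \setminus fA_2$ of minimal order $d$; by a standard lifting argument the principal symbol $\sigma_d(\theta)$ cannot be divisible by $f$, for otherwise writing $\sigma_d(\theta) = fh$ with $h \in \gr_d A_2$ and lifting $h$ to some $g \in F_d A_2$ would give $\theta - fg \in J\setminus fA_2$ of order strictly less than $d$. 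Hence $\gr J$ strictly contains the height-one prime $(f) \subseteq \kk[x,y,\xi,\eta]$, so $\GKdim(A_2/J) = \dim \gr(A_2)/\gr(J) \leq 2$. Thus $A_2/J$ is either zero or holonomic, and holonomic right $A_2$-modules have finite length, in particular are Artinian.

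For the non-Artinian part, I would exhibit infinitely many non-isomorphic simple quotients of $A_2/fA_2$. For every closed point $p=(a,b) \in X = \mathbb{V}(f)$, the maximal ideal $m_p = (x-a,y-b) \subset \kk[x,y]$ contains $f$, so $m_p A_2 \supseteq fA_2$, and a direct calculation in the PBW basis $\{\dx^i\dy^j\}$ shows $A_2/m_p A_2$ is a simple right $A_2$-module (any nonzero element can be reduced to the class of $1$ by repeated action of $x-a$ and $y-b$). For distinct $p \neq q$ these simples are non-isomorphic, since the $\kk[x,y]$-annihilator of the cyclic generator has radical $m_p$ (respectively $m_q$). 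Since $X$ has infinitely many closed points, $A_2/fA_2$ admits infinitely many non-isomorphic simple quotients, hence infinite length. Combining the two parts yields $\kdim(A_2/fA_2) = 1$ and establishes $1$-criticality. I expect the main subtlety to lie in the principal-symbol lifting argument; the rest is essentially packaging of standard facts about point modules and holonomic modules over $A_2$.
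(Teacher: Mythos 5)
Your proof is correct, but it takes a genuinely different route from the paper. The paper's argument is module-theoretic and abstract: it notes $\kk[x,y]/I$ is compressible, invokes \cite[Proposition 6.9.6]{MR} twice to deduce that $A_2/IA_2$ is compressible (viewing $A_2$ as an iterated Ore extension of $\kk[x,y]$), and then uses \cite[Lemma 6.9.4]{MR} (compressible with Krull dimension implies critical), with non-Artinianity coming from the explicit descending chain $(c^nA_2+IA_2)/IA_2$. You instead argue D-module-theoretically: the principal-symbol lifting shows $\gr J$ strictly contains the prime $(f)$ in $\kk[x,y,\xi,\eta]$, so $\GKdim(A_2/J)\leq 2$, and Bernstein's inequality makes every proper quotient holonomic, hence of finite length; non-Artinianity comes from the infinitely many pairwise non-isomorphic point modules $A_2/m_pA_2$, $p\in X$. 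Your route in fact proves Corollary \ref{Jfinleng} (finite length of $A_2/J$ for $J\supsetneq fA_2$) directly, which is what the paper actually uses downstream, at the cost of invoking holonomicity machinery; the paper's route is more elementary in that sense and works verbatim for any prime of $\kk[x,y]$ without symbol calculus. Two small points to tighten: for non-isomorphy of the point modules, argue via elements rather than only the chosen generator --- every nonzero element of $A_2/m_qA_2$ has $\kk[x,y]$-annihilator with radical $m_q$ (right multiplication by $x-a$ is injective on $A_2/m_qA_2$ when $a$ differs from the $x$-coordinate of $q$), which is isomorphism-invariant; and the step ``infinitely many non-isomorphic simple quotients implies not Artinian'' should either note that $A_2/fA_2$ is a Noetherian module (so infinite length forces failure of DCC) or exhibit the strictly descending chain of finite intersections of the corresponding maximal submodules.
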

Before we prove this, we provide some auxiliary definitions and results.
\begin{Definition}
A nonzero module $M_R$ over a ring $R$ is \emph{compressible} if for any submodule $N\leq M$ there exists a monomorphism $M\hookrightarrow N$.
\end{Definition}
\begin{example}\label{k/I comp}
We observe that $\kk[x,y]/I$ is compressible for all prime ideals $I$ since any pair of ideals are subisomorphic.
\end{example}
\begin{lemma}\label{comp-crit}\emph{\cite[Lemma $6.9.4$]{MR}}
A compressible module with Krull dimension is critical.
\end{lemma}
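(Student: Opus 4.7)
Write $\alpha := \kdim M$. To show $M$ is critical I need $\kdim(M/N) < \alpha$ for every nonzero submodule $N \leq M$, so I will argue by contradiction: suppose $0 \neq N \leq M$ satisfies $\kdim(M/N) \geq \alpha$. The standard bound $\kdim(M/N) \leq \kdim M = \alpha$ then forces $\kdim(M/N) = \alpha$ and $N \subsetneq M$. The plan is to bootstrap the compressibility hypothesis into an infinite strictly descending chain $M = M_0 \supsetneq M_1 \supsetneq M_2 \supsetneq \cdots$ of submodules of $M$ whose successive factors all have Krull dimension at least $\alpha$, producing a direct contradiction with $\kdim M = \alpha$ via the recursive definition of Krull dimension.

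I will build the chain inductively, carrying along at each stage a pair $(M_k, N_k)$ consisting of a submodule $M_k \leq M$ equipped with a specified isomorphism $M_k \cong M$ together with a nonzero submodule $N_k \leq M_k$ satisfying $M_k/N_k \cong M/N$; the base case takes $(M_0, N_0) := (M, N)$. Given $(M_k, N_k)$, compressibility of $M_k$ (inherited from $M$, as compressibility is invariant under isomorphism) supplies a monomorphism $\psi_k \colon M_k \hookrightarrow N_k$, and I set $M_{k+1} := \psi_k(M_k) \subseteq N_k \subsetneq M_k$ together with $N_{k+1} := \psi_k(N_k)$. The pair $(M_{k+1}, N_{k+1})$ inherits every property required to continue the induction, and the inclusion $M_{k+1} \subseteq N_k$ yields a surjection $M_k/M_{k+1} \twoheadrightarrow M_k/N_k \cong M/N$, whence $\kdim(M_k/M_{k+1}) \geq \kdim(M/N) = \alpha$.

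The existence of such a chain with every consecutive factor of Krull dimension at least $\alpha$ contradicts $\kdim M \leq \alpha$, which by definition requires the factors in any descending chain to have Krull dimension strictly less than $\alpha$ eventually. Hence no nonzero $N$ with $\kdim(M/N) = \alpha$ can exist, and $M$ is $\alpha$-critical. The only point of the argument requiring real care is the inductive bookkeeping: I must maintain both the isomorphism $M_k \cong M$ and the distinguished submodule $N_k$ so that compressibility can be reapplied at the next stage and the crucial inequality $\kdim(M_k/M_{k+1}) \geq \alpha$ follows at each step. Beyond that, the proof is a mechanical unfolding of the definitions of compressibility and Krull dimension, requiring no further input.
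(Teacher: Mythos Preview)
The paper does not supply its own proof of this lemma; it is quoted verbatim from \cite[Lemma~6.9.4]{MR} and used as a black box. Your argument is correct and is essentially the standard proof found in that reference: iterate the compressibility embedding to manufacture an infinite strictly descending chain whose successive quotients all surject onto $M/N$ and hence have Krull dimension at least $\alpha$, contradicting the definition of $\kdim M = \alpha$. Your inductive bookkeeping with the pairs $(M_k,N_k)$ is slightly more explicit than necessary---one could simply note that the image of $M$ inside $N$ is again isomorphic to $M$ and repeat---but the extra care does no harm and the logic is sound.
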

In light of this result, we show that $A_2/IA_2$ is compressible for any prime ideal $I\lhd \kk[x,y]$. We have the following result which shows how compressible modules behave over Ore extensions.
\begin{proposition}\label{McR-comp}\emph{\cite[Proposition $6.9.6$]{MR}}
Let $R$ be a ring, $\delta$ a derivation of $R$, and $S=R[x;\delta]$. If $M_R$ is compressible then $(M\otimes_{R}S)_S$ is compressible.
\end{proposition}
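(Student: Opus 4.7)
The plan is to take an arbitrary nonzero right $S$-submodule $N$ of $M' := M \otimes_R S$ and construct an $S$-linear monomorphism $M' \hookrightarrow N$ via the standard leading-coefficient technique for Ore extensions. Every element of $M'$ is uniquely of the form $\sum_i m_i \otimes x^i$ with $m_i \in M$, so the notions of degree and leading coefficient have their obvious meanings. For each $k \geq 0$, set
\[
L_k(N) := \{\, m \in M : m \text{ is the leading coefficient of some } u \in N \text{ with } \deg u = k \,\} \cup \{0\}.
\]
Since $\delta$ is a derivation, $x^i r \equiv r x^i$ modulo terms of $x$-degree less than $i$, so for $u \in N$ of degree $k$ with leading coefficient $m$ the element $u \cdot r$ has coefficient of $x^k$ equal to $mr$. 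Hence each $L_k(N)$ is a right $R$-submodule of $M$.

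The key step is to pick $k$ minimal with $L_k(N) \neq 0$, which exists because degrees lie in $\ZZ_{\geq 0}$. By minimality, $N$ contains no nonzero element of degree below $k$, so the right $R$-submodule $N_{\leq k} := \{u \in N : \deg u \leq k\}$ consists of zero together with elements of degree exactly $k$. The $R$-linear map $\psi_k : N_{\leq k} \to L_k(N)$ returning the coefficient of $x^k$ is then both injective and surjective, hence an $R$-module isomorphism.

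Compressibility of $M$ applied to the nonzero submodule $L_k(N) \subseteq M$ now yields an $R$-monomorphism $\phi : M \hookrightarrow L_k(N)$, and composition gives $\phi_0 := \psi_k^{-1} \circ \phi : M \hookrightarrow N_{\leq k} \subseteq N$. The universal property of extension of scalars extends $\phi_0$ uniquely to a right $S$-linear map $\Phi : M' \to N$ by $m \otimes s \mapsto \phi_0(m) \cdot s$. Injectivity of $\Phi$ follows from a degree count: for nonzero $u = \sum_{i \leq d} m_i \otimes x^i$ with $m_d \neq 0$, the term $\phi_0(m_d) \cdot x^d$ has $x^{k+d}$-coefficient $\phi(m_d) \neq 0$, while every other summand $\phi_0(m_i) \cdot x^i$ has degree at most $k + i < k + d$; hence the $x^{k+d}$-coefficient of $\Phi(u)$ is $\phi(m_d) \neq 0$.

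The main obstacle is the lifting: the $R$-monomorphism into $L_k(N)$ supplied by compressibility has to be promoted to an $R$-linear map into $N$ itself. For $k$ above the minimum the projection $N_{\leq k} \twoheadrightarrow L_k(N)$ has nonzero kernel $N_{\leq k-1}$ and there is no reason for a canonical $R$-linear splitting to exist; choosing $k$ minimal kills this kernel and makes $\psi_k$ invertible, which is precisely what lets the construction go through.
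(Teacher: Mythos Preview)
Your proof is correct. The paper does not actually display a proof of this proposition---it is cited from McConnell--Robson, and the write-up present in the source is commented out---but that commented argument follows the textbook route: first reduce to the case where $M$ is cyclic, say $M = R/I$ so that $M\otimes_R S \cong S/IS$; then pick a nonzero element $y$ in the given submodule of minimal $x$-degree $n$; use compressibility of $M_R$ to right-multiply $y$ by some $r\in R$ so that its leading coefficient has annihilator exactly $I$; check via minimality of $n$ that $\ann_R(y)=I$; and finally verify that $\ann_S(y)=IS$, whence $yS \cong S/IS = M\otimes_R S$. Your argument bypasses the cyclic reduction by working with the entire $R$-submodule $N_{\le k}$ of minimal-degree elements at once: the isomorphism $\psi_k : N_{\le k} \xrightarrow{\sim} L_k(N)$ replaces the annihilator computation, and extension of scalars along $\phi_0$ replaces the explicit verification that $yS$ has annihilator $IS$. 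Both proofs rest on the same idea---take minimal degree and apply compressibility to the leading-coefficient submodule---but yours is organised more structurally and handles non-cyclic $M$ uniformly, while the textbook version is more hands-on at the cost of the preliminary reduction step.
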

\begin{comment}
\begin{proof}
I imagine this will be turned into a reference soon but might as well write it up for now

Let $N$ be a cyclic submodule of $M$, then $M\hookrightarrow N$ and hence, as $_R S$ is free, $M\otimes_R S \hookrightarrow N \otimes_{R}S$. Hence without loss of generality, we may assume $M$ is cyclic and so $M=R/I$ and $M\otimes_RS = S/IS$.
Let $0\neq L$ be an $S$-submodule of $S/IS$. Choose $0\neq y \in L$, $y = \sum_{j=0}^nm_j\otimes x^j$ where the length $n$ is minimal. Then $m_n\neq 0$ and so, by the compressibility of $M_R$, $M\hookrightarrow m_nR$. Hence there exists an $r \in R$ with $\ann_R(m_nr) = I$. Replacing $y$ by $yr$, we may suppose $\ann_R (m_n) = I$. Now we see, if $a \in R$, then $(m_n\otimes x^n)a = \sum_{\ell=0}^n {n\choose \ell}m_n\delta^{n-\ell}(a)\otimes x^{\ell} = m_na\otimes x^n+ \sum_{\ell=0}^{n-1} {n\choose \ell}m_n\delta^{n-\ell}(a)\otimes x^{\ell}$ and this has $x$-degree less than $n$ precisely when $a \in I$. Hence, by the minimality of $n$, $\ann_R (y) = I$.

Suppose that $\ann_S (y)=IS$. Then
$$yS \cong S/IS\cong M\otimes_R S$$
and so $M\otimes_R S \hookrightarrow L$ as required.

Thus it remains to be shown that $\ann_S (y)=IS$. Indeed, suppose $yz = 0$ for some $z \in S\setminus{IS}$. We may suppose $z = \sum_{i=0}^kr_ix^i$ where $r_k \notin I$. However, the highest coefficient of $yx$ is nonzero which is a contradiction.
\end{proof}
\end{comment}
\begin{proof}[Proof of Theorem \ref{1critical}]
We begin by showing $\kdim(A_2/IA_2)=1$. For this, we simply observe that $A_2$ is 2-critical, thus $\kdim(A_2/IA_2)\leq 1$. Since $A_2/IA_2$ is not artinian (as an example consider an element $c \in \kk[x,y]\setminus{I}$ such that $c \notin \kk+I$ and $c$ is not a unit, then the descending chain $$A_2/IA_2\supsetneq (cA_2+IA_2)/IA_2 \supsetneq (c^2A_2+IA_2)/IA_2 \supsetneq \dots$$
is infinite), we have $\kdim(A_2/IA_2)=1$.

We must now show $A_2/IA_2$ is $1$-critical. Recall from Example \ref{k/I comp} that $\kk[x,y]/I$ is compressible for all prime ideals $I$. Hence, viewing $A_2$ as an iterated Ore extension, applying Proposition \ref{McR-comp} twice and Lemma \ref{comp-crit} we obtain the result.
\end{proof}
\begin{corollary}\label{Jfinleng}
If $J$ is a right ideal of $A_2$ which strictly contains $IA_2$, then $A_2/J$ has finite length.\qed
\end{corollary}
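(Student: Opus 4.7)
The plan is to deduce the corollary directly from Theorem \ref{1critical} together with the general fact that a module which is both noetherian and artinian has finite length.

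First I would observe that, since $J\supsetneq IA_2$, the module $A_2/J$ is a proper quotient of $A_2/IA_2$. By Theorem \ref{1critical}, $A_2/IA_2$ is $1$-critical, which by definition means that $\kdim(A_2/IA_2) = 1$ while every proper quotient has Krull dimension strictly less than $1$. Since Krull dimensions of subfactors of $A_2/IA_2$ are defined (the ambient module has Krull dimension $1$), "strictly less than $1$" forces $\kdim(A_2/J) = 0$, which is exactly the statement that $A_2/J$ is artinian.

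Next I would invoke noetherianity: $A_2$ is a noetherian ring (this is the standard fact about Weyl algebras used throughout the paper), so $A_2/J$ is a noetherian right $A_2$-module. A module that is both noetherian and artinian has finite length, which gives the conclusion.

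There is no real obstacle here: the content is entirely in Theorem \ref{1critical}, and the corollary is just the translation of $1$-criticality into the length statement that will actually be used in later arguments (namely, the reduction to maximal right ideals $J \supsetneq fA_2$ mentioned just before the theorem). I would not expect the written proof to be more than a sentence or two, and it can be safely marked with \qed as already done in the statement.
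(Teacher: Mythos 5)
Your argument is correct and is exactly the reasoning the paper intends by marking the corollary with \qed after Theorem \ref{1critical}: a proper quotient of the $1$-critical module $A_2/IA_2$ has Krull dimension $0$, hence is artinian, and since $A_2$ is noetherian the quotient is also noetherian, so it has finite length. No difference in approach.
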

\subsection{Reduction to Simple Modules with a specific Torsion Property}
From here on, let $f\in \kk[x,y]$ be such that $X\coloneqq \mathbb{V}(f)$ satisfies the hypotheses of Notation \ref{NOT}. In this subsection we show that in our situation, in order to apply Proposition \ref{homs} it is not necessary to consider all right ideals $J$ which contain $fA_2$, rather just those $J$ which are maximal. We shall then see that considering a certain torsion property splits simple right $A_2$-modules into two types.
\begin{proposition}\label{homsequivs}
Assume Notation \ref{NOT}. Then the following are equivalent:
\begin{enumerate}
    \item[(1)] $\Hom_{A_2}(A_2/fA_2,A_2/J)$ is a finitely generated right $D$-module for all right ideals $J\leq_{r}A_2$ such that $fA_2\subsetneqq J$;
    \item[(2)] $\Hom_{A_2}(A_2/fA_2,A_2/J)$ is a finitely generated right $D$-module for all maximal right ideals $J\leq_{r}A_2$ which contain $fA_2$;
    \item[(3)] $\Hom_{A_2}(A_2/fA_2,N)$ is a finitely generated right $D$-module for all simple right $A_2$-modules $N$.
    \item[(4)] $\If$ is right noetherian.
\end{enumerate}
\end{proposition}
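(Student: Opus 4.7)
The plan is to establish the cycle $(1) \Rightarrow (2) \Rightarrow (3) \Rightarrow (1)$ together with the separate equivalence $(1) \iff (4)$. The equivalence $(1) \iff (4)$ is essentially Proposition \ref{homs} applied to $B = A_2$, $I = fA_2$, $R = \If$; since $D \cong \If/fA_2$ is right noetherian (Corollary \ref{noethsimp}) and annihilates the $\Hom$-spaces in question, being finitely generated as a right $D$-module coincides with being noetherian as a right $\If$-module. The trivial case $J = fA_2$ (not captured in (1)'s strict containment) is harmless, as $\Hom_{A_2}(A_2/fA_2,A_2/fA_2) \cong D$ is cyclic over itself. The implication $(1) \Rightarrow (2)$ is immediate, since any maximal right ideal containing $fA_2$ contains it strictly---$fA_2$ itself is not maximal because $A_2/fA_2$ has Krull dimension $1$ by Theorem \ref{1critical} and hence is not simple.

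The heart of the argument is $(2) \Rightarrow (3)$. Given a simple right $A_2$-module $N$, if $\Hom_{A_2}(A_2/fA_2,N) = 0$ there is nothing to show. Otherwise fix a nonzero $\phi : A_2/fA_2 \to N$; by simplicity $\phi$ is surjective and its kernel has the form $K/fA_2$ for some right ideal $K$ with $fA_2 \subseteq K \subsetneqq A_2$. The quotient identification $N \cong (A_2/fA_2)/(K/fA_2) \cong A_2/K$ then forces $K$ to be maximal. Naturality of $\Hom_{A_2}(A_2/fA_2,-)$ in the second argument upgrades this $A_2$-isomorphism to an isomorphism of right $D$-modules $\Hom_{A_2}(A_2/fA_2,N) \cong \Hom_{A_2}(A_2/fA_2,A_2/K)$, whose right-hand side is finitely generated over $D$ by (2). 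This is the step requiring conceptual care: (3) nominally quantifies over all simple right $A_2$-modules, not just those isomorphic to $A_2/K$ for $K$ a maximal right ideal containing $fA_2$, but the existence of a nonzero $\Hom$ forces exactly this form.

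For $(3) \Rightarrow (1)$, let $J \supsetneqq fA_2$; by Corollary \ref{Jfinleng} the module $A_2/J$ has finite length, so I induct on the length $n$ of a finite length right $A_2$-module $M$ to show that $\Hom_{A_2}(A_2/fA_2,M)$ is finitely generated over $D$. The case $n = 1$ is exactly (3). For $n \geq 2$, take any simple submodule $L \subset M$ and apply the left-exact functor $\Hom_{A_2}(A_2/fA_2,-)$ to $0 \to L \to M \to M/L \to 0$ to obtain an exact sequence of right $D$-modules whose outer terms are finitely generated (by (3) and the inductive hypothesis). Right noetherianity of $D$ (Corollary \ref{noethsimp}) then forces the image of the map $\Hom_{A_2}(A_2/fA_2,M) \to \Hom_{A_2}(A_2/fA_2,M/L)$ to be finitely generated, so $\Hom_{A_2}(A_2/fA_2,M)$ is an extension of finitely generated $D$-modules and hence is itself finitely generated.
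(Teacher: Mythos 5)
Your proposal is correct and follows essentially the same route as the paper: the equivalence with (4) via Proposition \ref{homs} together with the observation that $\Hom_{A_2}(A_2/fA_2,A_2/fA_2)\cong D$, the reduction of simple modules to $A_2/K$ with $K$ a maximal right ideal containing $fA_2$ via a nonzero surjection, and the finite-length induction using Corollary \ref{Jfinleng}, left exactness of $\Hom$, and right noetherianity of $D$. The only differences are organizational (you close the cycle as $(1)\Rightarrow(2)\Rightarrow(3)\Rightarrow(1)$ and induct on length directly rather than along a composition series), which changes nothing of substance.
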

\begin{proof}
We observe the implications $(1)$ implies $(2)$ and $(3)$ implies $(2)$ are straightforward.
We show $(2)$ implies $(1)$ and $(3)$. Assume $(2)$ holds and let $N$ be a simple right $A_2$-module. Without loss of generality, suppose $\varphi\in \Hom_{A_2}(A_2/fA_2,N)$ is a nonzero homomorphism. Consider $0\neq m = \varphi(1+fA_2)$. As $N$ is simple $mA_2 = N$. Hence we have a surjective homomorphism $\psi:A_2\twoheadrightarrow N$ by $a\mapsto m.a$, and $N\cong A_2/\rann(m)$. Clearly $fA_2\subseteq\rann(m)\leq_{r}A_2$. Now $$\Hom_{A_2}(A_2/fA_2,N) \cong \Hom_{A_2}(A_2/fA_2,A_2/\rann(m))$$ where the right-hand side is a finitely generated right $D$-module by assumption. Hence $(3)$ holds.

Now we show that $(2)$ implies $(1)$. Let $J$ be a right ideal of $A_2$ which strictly contains $fA_2$. By Corollary \ref{Jfinleng}, $A_2/J$ has finite length, say $n$, and hence it has a composition series of the form
$$0\subseteq M_0/J\subseteq M_2/J \subseteq \dots \subseteq M_n/J = A_2/J$$
where $M_i\leq_{r}A_2$ are right ideals which contain $J$ and each $M_i/M_{i-1}$ is a simple right $A_2$-module. We show that $\Hom_{A_2}(A_2/IA_2, M_j/J)$ is a finitely generated right $\DD(X)$-module for all $j\geq 0$ by induction on $j$. If $j=0$ then $M_0/J$ is simple and the result follows since $(3)$ holds.

Now assume that $j>0$. Consider the short exact sequence of $A_2$-modules

$$0 \to M_{j-1}/J \to M_j/J \to M_j/M_{j-1} \to 0.$$
Applying $\Hom_{A_2}(A_2/fA_2,-)$ gives rise to an exact sequence of $D$-modules
\begin{align*}
\scalebox{0.91}{$0\to \Hom_{A_2}(A_2/fA_2,M_{j-1}/J)\xrightarrow{\alpha} \Hom_{A_2}(A_2/fA_2,M_j/J) \xrightarrow{\beta} \Hom_{A_2}(A_2/fA_2,M_{j}/M_{j-1})$}
\end{align*}
from which we extract the short exact sequence of $D$-modules
$$0 \to \Hom_{A_2}(A_2/fA_2,M_{j-1}/J) \to \Hom_{A_2}(A_2/fA_2,M_j/J) \to \im \beta \to 0. $$
As $\Hom_{A_2}(A_2/fA_2,M_{j-1}/J)$ is a finitely generated right $D$-module by induction, and $\im \beta$ is a submodule of $\Hom_{A_2}(A_2/fA_2,M_j/M_{j-1})$, which is finitely generated since $(3)$ holds and $D$ is right noetherian, $\Hom_{A_2}(A_2/fA_2,A_2/J)$ is finitely generated as required.

Finally we must show the equivalence of $(1)$ and $(4)$. By Proposition \ref{homs}, $(4)$ implies $(1)$. For the converse, Proposition \ref{homs} and the fact that $D$ is right noetherian (Corollary \ref{noethsimp}) show that we must check that  $\Hom_{A_2}(A_2/fA_2,A_2/fA_2)$ is a finitely generated right $D$-module. But this is clear since
$$\Hom_{A_2}(A_2/fA_2,A_2/fA_2)\cong \frac{(fA_2:fA_2)}{fA_2} = \frac{\If}{fA_2} \cong D.$$
\end{proof}
We now split simple $A_2$-modules into two cases.
\begin{Definition}
Let $M$ be a right $A_2$-module and let $\mathcal{C}$ be a right Ore set in $A_2$. Define 
$$t_{\mathcal{C}}(M) \coloneqq \{m \in M \mid m\cdot a = 0 \text{ for some } a \in \mathcal{C}\}$$
to be the set of $\mathcal{C}$-torsion elements of $M$. Observe that this is a submodule of $M$. If $t_{\mathcal{C}}(M) = 0$, we say $M$ is \emph{$\mathcal{C}$-torsionfree}, and if $t_{\mathcal{C}}(M) = M$, $M$ is \emph{$\mathcal{C}$-torsion}.
\end{Definition}
The following lemma splits simple $A_2$-modules into two natural cases. When $\mathcal{C} = \kk[y]^*$ we will abuse notation and say $M$ is $\kk[y]$-torsionfree or $\kk[y]$-torsion respectively; similarly if $\mathcal{C} = \kk[x]^*$.
\begin{lemma}\label{torN}
Let $M$ be a simple right $A_2$-module and consider the right Ore set $\mathcal{C} = \kk[y]^*$ or $\mathcal{C} = \kk[x]^*$. Then either $M$ is $\mathcal{C}$-torsion or $\mathcal{C}$-torsionfree.
\end{lemma}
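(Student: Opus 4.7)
The plan is to observe that this lemma is essentially immediate from the submodule property of $t_\mathcal{C}(M)$ asserted in the preceding definition, combined with simplicity of $M$. So the only real content to establish is that $\mathcal{C} = \kk[y]^*$ (resp.\ $\kk[x]^*$) is a right Ore set in $A_2$, so that $t_\mathcal{C}(M)$ is indeed an $A_2$-submodule. Once this is in hand, simplicity of $M$ forces $t_\mathcal{C}(M) \in \{0,M\}$, which is exactly the claimed dichotomy.

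To verify the right Ore condition, I would note that $A_2$ is a noetherian domain and that $\mathcal{C}$ consists of regular elements; by the standard criterion this already guarantees that $\mathcal{C}$ is a right (and left) Ore set. More concretely, one may view $A_2 \cong A_1(x) \otimes_\kk A_1(y)$, and then $\kk[y]^* \subseteq A_1(y)$ is Ore in $A_1(y)$ (and hence in $A_2$) by direct computation using $[\partial_y, p(y)] = p'(y) \in \kk[y]$. This localisation is in fact routinely used elsewhere in the paper (for instance it is implicit in applications of Lemma \ref{LcDiffs} and Theorem \ref{Bern}), so no new work is required.

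Given the Ore condition, the submodule property is the usual calculation: for $m_1 c_1 = m_2 c_2 = 0$ one picks a common right multiple $c_1 r_1 = c_2 r_2 \in \mathcal{C}$ so that $(m_1+m_2)(c_1 r_1) = 0$, and for $m c = 0$ with $a \in A_2$ one uses the right Ore condition to find $a c' = c a'$ with $c' \in \mathcal{C}$, giving $(ma)c' = m c a' = 0$. Thus $t_\mathcal{C}(M)$ is a submodule of the simple module $M$, and the lemma follows. The main (and only) obstacle is really a bookkeeping one — confirming that the Ore condition is satisfied in the form needed — but this is completely standard, so I do not expect any difficulty.
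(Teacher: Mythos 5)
Your argument is correct and follows essentially the same route as the paper: the paper simply cites \cite[Lemma 4.21]{GW} for the fact that $t_{\mathcal{C}}(M)$ is an $A_2$-submodule and then invokes simplicity of $M$, exactly as you do. One caution: your first justification of the Ore condition --- that a multiplicatively closed set of regular elements in a noetherian domain is automatically Ore --- is not a valid general principle (for instance, the powers of $x$ fail the right Ore condition in the enveloping algebra of the two-dimensional Lie algebra with $[x,y]=y$); it is your second argument, via $A_2\cong A_1\otimes_{\kk}A_1$ and the commutator computation (equivalently the paper's Lemma \ref{LcDiffs}), that actually establishes that $\kk[x]^*$ and $\kk[y]^*$ are right Ore sets, so the proof stands as written.
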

\begin{proof}
By \cite[Lemma 4.21]{GW} $t_{\mathcal{C}}(M)$ is an $A_2$-submodule of $M$. Hence, by simplicity of $M$, either $t_{\mathcal{C}}(M) =0$ or $t_{\mathcal{C}}(M) =M$.
\end{proof}
We summarise our progress so far. In order to prove that $\If$ is right noetherian, we must show that  $\Hom_{A_2}(A_2/fA_2,A_2/J)$ is a finitely generated right $D$-module for all maximal right ideals $J$ which strictly contain $fA_2$; these fall into two cases: 
\begin{enumerate}
    \item[(1)] $A_2/J$ is $\kk[y]$-torsion and $\kk[x]$-torsion;
    \item[(2)] $A_2/J$ $\kk[y]$-torsionfree or $\kk[x]$-torsionfree.
\end{enumerate} We begin with case $(2)$.
\subsection{The Torsionfree Case}
In this section we show that if $A_2/J$ is $\kk[y]$- or $\kk[x]$-torsionfree, then the right $D$-module $\Hom_{A_2}(A_2/fA_2,A_2/J)$ is finitely generated for all maximal right ideals $J$ which strictly contain $fA_2$. We will see it is enough to prove this for the $\kk[y]$-torsionfree case.
\begin{lemma}\label{incl}
Let $J$ be a maximal right ideal of $A_2$ which strictly contains $fA_2$ and such that $A_2/J$ is $\kk[y]$-torsionfree. Let $g\in \kk[y]$ be the polynomial which defines the $y$-coordinates of the singular points of $X$. Note that $S = \{g^n\}_{n\geq 0}$ forms a right Ore set in $A_2$ (Lemma \ref{LcDiffs}) and so we may construct the localisation of $A_2$ by $S$:
$$A_2[g^{-1}] = A_2S^{-1}\coloneqq\{ab^{-1} \mid a \in A_2, \ b \in S\}.$$
Then the natural map of the following right $D$-modules 
$$\Hom_{A_2}(A_2/fA_2,A_2/J)\to \Hom_{A_2[g^{-1}]}(A_2[g^{-1}]/fA_2[g^{-1}],A_2[g^{-1}]/J[g^{-1}])$$
is injective.
\end{lemma}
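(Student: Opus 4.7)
The plan is to make both $\Hom$-modules completely explicit via Proposition \ref{homsquotient}, and then translate injectivity of the natural map into a torsion statement which is handled directly by the $\kk[y]$-torsionfree hypothesis on $A_2/J$.

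First, I would invoke Proposition \ref{homsquotient} to identify
\[\Hom_{A_2}(A_2/fA_2, A_2/J) \cong (J:fA_2)/J,\]
with a class $a+J$ corresponding to the homomorphism sending $1+fA_2$ to $a+J$. Applied to the localisation $A_2[g^{-1}]$ (and to the right ideals $fA_2[g^{-1}] \subseteq J[g^{-1}]$), the same proposition gives
\[\Hom_{A_2[g^{-1}]}\!\left(A_2[g^{-1}]/fA_2[g^{-1}],\, A_2[g^{-1}]/J[g^{-1}]\right) \cong (J[g^{-1}]:fA_2[g^{-1}])/J[g^{-1}].\]
Under these identifications, the natural map is simply $a + J \mapsto a + J[g^{-1}]$, using that $A_2 \hookrightarrow A_2[g^{-1}]$ since $A_2$ is a domain and $g\neq 0$.

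Next I would check injectivity on these concrete representatives. Suppose $a + J$ lies in the kernel, so $a \in J[g^{-1}] = J\cdot S^{-1}$. Then $a g^n \in J$ for some $n \geq 0$, i.e.\ $(a+J)\cdot g^n = 0$ in $A_2/J$. Since $g \in \kk[y]^*$, the class $a+J$ is a $\kk[y]$-torsion element of $A_2/J$. By hypothesis $A_2/J$ is $\kk[y]$-torsionfree, so $a \in J$ and the class was zero to begin with.

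The only verification worth flagging is that the ``natural map'' of the statement corresponds, under the identifications coming from Proposition \ref{homsquotient}, to the inclusion $a + J \mapsto a + J[g^{-1}]$; this is a routine unwinding using that a homomorphism between cyclic modules is determined by the image of the generator, together with flatness of $A_2 \to A_2[g^{-1}]$. Beyond that, I do not expect any real obstacle: the case split from Lemma \ref{torN} was set up precisely so that the torsionfree case lets us pass to the localisation without losing information on $\Hom$.
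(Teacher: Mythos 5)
Your proposal is correct and follows essentially the same route as the paper: identify both $\Hom$-modules via Proposition \ref{homsquotient}, observe that the natural map becomes $a+J\mapsto a+J[g^{-1}]$, and note that any kernel element satisfies $ag^n\in J$ for some $n$, hence is $\kk[y]$-torsion in $A_2/J$ and therefore zero by the torsionfree hypothesis. No gaps to flag.
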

\begin{proof}
Recall the definition of the ideal quotient of two right ideals $I,J$ in a ring $R$
$$(J:I) = \{r\in R \mid rI\subseteq J\}.$$
\begin{comment}

$$\frac{(J:fA_2)}{J} \to \frac{(J[g^{-1}]:fA_2[g^{-1}])}{J[g^{-1}]} $$
is injective.

\end{comment}
We have the following natural map of right $\If$-modules:
$$(J:fA_2) \xhookrightarrow{\iota} (J[g^{-1}]:fA_2[g^{-1}])\xtwoheadrightarrow{h} \frac{(J[g^{-1}]:fA_2[g^{-1}])}{J[g^{-1}]}$$
and let us consider the map $\pi = h\circ \iota$, so $\pi(a) = a+J[g^{-1}].$ Note that $J\subseteq \ker \pi$ and that the induced map 
$$\frac{(J:fA_2)}{J}\to \frac{(J[g^{-1}]:fA_2[g^{-1}])}{J[g^{-1}]}$$
intertwines with the isomorphisms from Proposition \ref{homsquotient} to give the map in the statement of the lemma. If we can show $\ker \pi = J$, then the result will follow by Proposition \ref{homsquotient}. To this end, let $m \in \ker \pi$. Then
\begin{align*}
    m \in J[g^{-1}] &\implies m = jg^{-n} \quad \textrm{ for some } j \in J \\
    & \implies mg^n \in J \implies m+J\in A_2/J \text{ is } \kk[y]\text{-torsion}.
\end{align*}
By our hypothesis, this implies $m\in J$.
\end{proof}
\begin{lemma}\label{localisationgood}
Let $X$ and $X'$ be birational, irreducible, affine curves which contain a common open subset $U$. Then the set of linear differential operators
$$\DD(X,U)= \DD(U)= \DD(X',U)$$
are equal.

Further, 
$$\DD(U)\otimes_{\DD(X')} \DD(X,X') \subseteq \DD(U).$$
\end{lemma}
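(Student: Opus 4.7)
My plan is to identify every algebra in the statement with a subring of $\DD(K)$, where $K$ is the common function field of $X$, $X'$ and $U$, and to extend the convention of Definition~\ref{D-rings} to open subsets by setting
$$\DD(X,U) = \{D \in \DD(K) \mid D * \mathcal{O}(X) \subseteq \mathcal{O}(U)\},$$
and analogously for $\DD(X',U)$. The inclusion $\DD(U) \subseteq \DD(X,U)$ is then immediate from $\mathcal{O}(X) \subseteq \mathcal{O}(U)$, since any $D$ preserving $\mathcal{O}(U)$ a fortiori sends $\mathcal{O}(X)$ into $\mathcal{O}(U)$.

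The reverse inclusion $\DD(X,U) \subseteq \DD(U)$ is the main content, and I would prove it by induction on the order of $D$ in $\DD(K)$. The order-zero case is trivial, since such a $D$ is multiplication by $D * 1 \in \mathcal{O}(U)$. For $D$ of positive order $n$, it suffices (using the presentation $\mathcal{O}(U) = \bigcap_{p \in U} \mathcal{O}_{X,p}$) to check $D * c \in \mathcal{O}_{X,p}$ at each $p \in U$ for every $c \in \mathcal{O}(U)$. Writing $c = a/s$ with $a \in \mathcal{O}(X)$ and $s \in \mathcal{O}(X) \setminus \mathfrak{m}_p$, the identity $Ds = sD + [D,s]$ in $\DD(K)$ applied to $c$ yields
$$D * c = s^{-1}\bigl(D * a - [D,s] * c\bigr).$$
The commutator $[D,s]$ has order at most $n-1$, and a direct check (using $sa' \in \mathcal{O}(X)$ for every $a' \in \mathcal{O}(X)$) shows $[D,s] \in \DD(X,U)$, so by the induction hypothesis $[D,s] \in \DD(U)$ and hence $[D,s] * c \in \mathcal{O}(U) \subseteq \mathcal{O}_{X,p}$. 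Combined with $D * a \in \mathcal{O}(U) \subseteq \mathcal{O}_{X,p}$ and $s^{-1} \in \mathcal{O}_{X,p}$, this gives $D * c \in \mathcal{O}_{X,p}$ as required. The identical argument with $X$ replaced by $X'$ establishes $\DD(X',U) = \DD(U)$.

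For the second assertion, any $D' \in \DD(X,X')$ satisfies $D' * \mathcal{O}(X) \subseteq \mathcal{O}(X') \subseteq \mathcal{O}(U)$, so by the first part $D' \in \DD(X,U) = \DD(U)$. Since $\DD(X') \subseteq \DD(U)$ by Lemma~\ref{LcDiffs}, multiplication inside the ring $\DD(U)$ induces a well-defined $\DD(X')$-balanced map $\DD(U) \otimes_{\DD(X')} \DD(X,X') \to \DD(U)$, yielding the claimed inclusion. The main delicate point is the inductive step: one must justify the commutator reduction locally at each $\mathcal{O}_{X,p}$ and rely on the description $\mathcal{O}(U) = \bigcap_{p \in U} \mathcal{O}_{X,p}$, which is standard for irreducible affine curves even in the presence of cuspidal singularities, but is the only part of the argument that requires the curve geometry rather than purely formal manipulation.
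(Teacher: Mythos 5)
Your proof is correct, but it reaches the key equality $\DD(X,U)=\DD(U)$ by a genuinely different route from the paper. The paper first produces a single element $g\in\mathcal{O}(X)$ with $\mathcal{O}(U)=\mathcal{O}(X)[g^{-1}]$ (``a standard argument in projective geometry'') and then pushes the action of $D\in\DD(X,U)$ through the localisation using Lemma \ref{LcDiffs}; you instead induct on the order of $D$ in $\DD(K)$, using the commutator identity $D*c=s^{-1}\bigl(D*a-[D,s]*c\bigr)$ together with the intrinsic description $\mathcal{O}(U)=\bigcap_{p\in U}\mathcal{O}_{X,p}$. Your argument buys robustness: it needs no affineness or principality of $U$ at all, whereas realising $\mathcal{O}(U)$ as a localisation at one $g$ is genuinely restrictive for curves of positive genus (the complement of a point need not be a principal open set), so your proof covers the lemma as literally stated, while the paper's argument is tailored to the situation of Proposition \ref{TFFG}, where $U$ is a principal open subset by construction and Lemma \ref{LcDiffs} applies painlessly. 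Two small remarks. First, your citation of Lemma \ref{LcDiffs} for $\DD(X')\subseteq\DD(U)$ strictly presupposes that $\mathcal{O}(U)$ is a localisation of $\mathcal{O}(X')$; this is unnecessary, since the inclusion is immediate from your own first part via $\DD(X')\subseteq\DD(X',U)=\DD(U)$. Second, for the final inclusion the paper fixes the convention of identifying $\DD(U)\otimes_{\DD(X')}\DD(X,X')$ with the product $\DD(U)\DD(X,X')$ inside the Goldie quotient ring (using that $\DD(X,X')$ is rank one projective over the HNP ring $\DD(X')$); your balanced multiplication map proves exactly that this product lands in $\DD(U)$, which is the substance of the claim under the same identification, so the two arguments for the second assertion essentially coincide.
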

\begin{proof}
Let the field of fractions associated to $\mathcal{O}(U)$ be denoted $K = \kk(U)$. Then we may identify $\DD(U)$ with its image in $\DD(K)$ as
$$\DD(U) = \{\theta\in \DD(K) \mid \theta*(\mathcal{O}(U))\subseteq \mathcal{O}(U)\}.$$
We may similarly identify $\DD(X,X')$ as 
$$\DD(X,X') = \{\theta\in \DD(K) \mid \theta*(\mathcal{O}(X))\subseteq \mathcal{O}(X')\}.$$
Since $\DD(X)$ is an HNP ring, $\DD(X,X')$ is a rank $1$ projective $\DD(X)$-module and in this proof we will identify $\DD(U)\otimes_{\DD(X')}\DD(X,X')$ with the subset $\DD(U)\DD(X,X')$ of the Goldie quotient ring $Q$. We will do this in future without comment.

%As $U\subseteq X$ (resp. $X'$) we may find a closed subset $Z$ (resp. $Z'$) such that $Z = X\setminus{U}$ (resp. $Z' = X'\setminus{U}$). Further, 

As $\kk(X) = \kk(X') = \kk(U)$, we may use a standard argument in projective geometry to find $g \in \mathcal{O}(X)$ (respectively $g' \in \mathcal{O}(X')$) such that $\mathcal{O}(U) = \mathcal{O}(X)[g^{-1}]$ (respectively $\mathcal{O}(U) = \mathcal{O}(X')[g'^{-1}]$).

Then, as $\mathcal{O}(X)\subseteq \mathcal{O}(U)$, $\DD(U) \subseteq \DD(X,U)$. Now, let $D \in \DD(X,U)$ and observe that
$$D*(\mathcal{O}(U)) = D*(\mathcal{O}(X)[g^{-1}]) \subseteq D*(\mathcal{O}(X))[g^{-1}] \subseteq \mathcal{O}(U)[g^{-1}] = \mathcal{O}(U),$$
where the first containment follows by Lemma \ref{LcDiffs}. So $\DD(X,U) = \DD(U)$, and similarly, $\DD(X',U) = \DD(U)$.

Finally, 
$$\DD(U)\DD(X,X') *(\mathcal{O}(X)) = \DD(U) *(\mathcal{O}(X')) \subseteq \mathcal{O}(U),$$
and so $\DD(U)\DD(X,X') \subseteq \DD(X,U)  = \DD(U)$.
\end{proof}
\begin{proposition}\label{TFFG}
Let $J$ be a maximal right ideal of $A_2$ which strictly contains $fA_2$ such that $A_2/J$ is $\kk[y]$-torsionfree and let $g\in \kk[y]$ be the polynomial which defines the $y$-coordinates of the singular points of $X$. Then $$M\coloneqq \Hom_{A_2[g^{-1}]}(A_2[g^{-1}]/fA_2[g^{-1}],A_2[g^{-1}]/J[g^{-1}])$$ is a finitely generated right $D$-module. Consequently, $\Hom_{A_2}(A_2/fA_2,A_2/J)$ is a finitely generated right $D$-module.
\end{proposition}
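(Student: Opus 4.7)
The plan is to prove $M$ is finitely generated as a right $D$-module by reducing to the smooth locus $X\setminus V(g)$, where McCaffrey's theorem and holonomicity apply, and then descending back to $D$ via Bernstein's preservation of holonomicity combined with the Morita equivalence between $D$ and $A=\DD(\widetilde{X})$.

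First, since $g$ vanishes on all singular $y$-coordinates of $X$, both $B\coloneqq\kk[x,y][g^{-1}]$ and $B/fB\cong\mathcal{O}(X\setminus V(g))$ are regular $\kk$-affine domains. McCaffrey's Theorem \ref{McCfnoeth} (together with Lemma \ref{LcDiffs} identifying $\DD(B)=A_2[g^{-1}]$) then gives that $\mathbb{I}_{A_2[g^{-1}]}(fA_2[g^{-1}])$ is right noetherian. By Proposition \ref{homs} combined with Proposition \ref{idealizer-diffops}, $M$ is finitely generated as a right module over $\DD(X\setminus V(g))=D[g^{-1}]$. To upgrade this to holonomicity, McCaffrey's Propositions \ref{strucM} and \ref{strictc} applied in the localization yield an identification $M\cong D[g^{-1}]/K$ for a nonzero proper right ideal $K$ of the HNP domain $D[g^{-1}]$; cyclic modules with nonzero right annihilator over an HNP ring have finite length by \cite[Lemma 6.2.8]{MR}, so $M$ is holonomic over $D[g^{-1}]$.

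Next, I would apply Lemma \ref{localisationgood} to $X$ and $\widetilde{X}$, which share the common smooth open $X\setminus V(g)$, to identify $D[g^{-1}]=\DD(X\setminus V(g))=A[g^{-1}]$. Hence $M$ is a holonomic right $A[g^{-1}]$-module. Since $\widetilde{X}$ is smooth, Bernstein's Theorem \ref{Bern} then applies and yields that $M$ is holonomic, in particular finitely generated, as a right $A$-module.

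The final step is to transfer finite generation back to $D$ through the Morita equivalence of $D$ and $A$ via $P=\DD(\widetilde{X},X)$ (Theorem \ref{SS}). The key observation is that $P[g^{-1}]=D[g^{-1}]=A[g^{-1}]$, so every right $A$-linear map $\phi:P\to M$ extends uniquely to a right $A[g^{-1}]$-linear map on $P[g^{-1}]=A[g^{-1}]$ (using that $M$ is already an $A[g^{-1}]$-module), and evaluation at $1$ produces an isomorphism of right $D$-modules $\Hom_A(P,M)\cong M$. Under the Morita functor $\Hom_A(P,-)$ from right $A$-modules to right $D$-modules, finite generation is preserved, so $M$ finitely generated over $A$ forces $M$ finitely generated over $D$, which is the main claim. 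The ``consequently'' statement then follows from Lemma \ref{incl}, which embeds $\Hom_{A_2}(A_2/fA_2,A_2/J)$ as a right $D$-submodule of the finitely generated right $D$-module $M$, together with the noetherianity of $D$ from Corollary \ref{noethsimp}. The most delicate step is this Morita transfer: one must reconcile the Morita-equivalence $A$-action with the natural restriction of the $A[g^{-1}]$-action on $M$ through the identification $\Hom_A(P,M)\cong M$.
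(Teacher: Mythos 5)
Your overall route is the paper's: localise at $g$, use McCaffrey (Theorem \ref{McCfnoeth}, Propositions \ref{strucM} and \ref{strictc}) to present $M$ as a proper cyclic quotient of $\DD(\mathcal{O}(X)[\overline{g}^{-1}])$, identify this ring with a localisation of $A=\DD(\NX)$ (the localised curve is smooth, hence normal), deduce finite length and holonomicity, apply Bernstein's Theorem \ref{Bern} over the smooth curve $\NX$ to get finite generation over $A$, and finish by transferring along the Morita equivalence; the ``consequently'' via Lemma \ref{incl} plus noetherianity of $D$ is also exactly as intended. Applying Bernstein directly to $M$ rather than to its simple composition factors is a harmless shortcut. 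One small slip: $D[g^{-1}]/K$ has \emph{zero} annihilator as a module, since $D[g^{-1}]$ is a simple ring; what you mean is that the cyclic generator has nonzero annihilator (the module is torsion over an HNP domain), or, as the paper argues, that $A[G^{-1}]$ is $1$-critical so proper cyclic quotients have finite length.

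The genuine weak point is the Morita transfer, which you yourself flag as delicate. You assert ``$P[g^{-1}]=D[g^{-1}]=A[g^{-1}]$'' as the key observation but give no argument, and everything downstream (existence and uniqueness of the extension of $\phi$ to $A[g^{-1}]$, and the isomorphism $\Hom_A(P,M)\cong M$ of right $D$-modules) rests on it. The real content is $P\otimes_A A[g^{-1}]\cong PA[g^{-1}]=A[g^{-1}]$, i.e.\ that $P=\DD(\NX,X)$ becomes the unit ideal after inverting $g$. This is true but is not one of the quoted results: Lemma \ref{localisationgood} only gives the one-sided containment $\DD(U)\DD(X,X')\subseteq \DD(U)$ for the mirror-image product, and the paper itself spends a substantial part of its proof establishing its version $A[G^{-1}]\otimes_{D}P\cong A[G^{-1}]$ (via Lemma \ref{localisationgood}, simplicity of $D$ and $A$, and the progenerator identities $P\otimes_A P^*=D$, $P^*\otimes_D P=A$), after which it still needs Proposition \ref{GOOD} to descend simple $A[G^{-1}]$-modules to simple $A$-modules. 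To close your gap one can argue directly: the conductor of $\mathcal{O}(X)\subseteq\mathcal{O}(\NX)$ is a nonzero ideal whose zero locus is the singular locus of $X$, which lies in $\mathbb{V}(g)$, so by the Nullstellensatz $g^{N}\mathcal{O}(\NX)\subseteq\mathcal{O}(X)$ for some $N$; thus multiplication by $g^{N}$ lies in $P$, whence $1=g^{N}g^{-N}\in PA[g^{-1}]$ and $PA[g^{-1}]=A[g^{-1}]$. With that supplied (together with the routine check that evaluation at $1$ intertwines the right $D$-action on $\Hom_A(P,M)$ with the restricted $A[g^{-1}]$-action on $M$), your ending works and is in fact a little shorter than the paper's, since it bypasses Proposition \ref{GOOD}; as written, however, the pivotal identity is asserted rather than proved, and it is precisely where the paper's proof does its real work.
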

\begin{proof}
Let the normalisation of $X$ be the curve $\NX$ and let $\phi:\NX\to X$ be the normalisation map which is bijective. We note that $\mathcal{O}(X)$ and $\mathcal{O}(\NX)$ may be viewed as subsets of $\kk(X)$, and hence $\DD(X)$ and $\DD(\NX)$ can be identified as subsets of $\DD(\kk(X))$. Let $A = \DD(\NX)$. We first claim that $M$ is a finite length right module over a localisation of $A$.

Indeed,
\begin{align*}
    M= \frac{(J[g^{-1}]:fA_2[g^{-1}])}{J[g^{-1}]} &= \frac{\mathbb{I}_{A_2[g^{-1}]}(fA_2[g^{-1}])+J[g^{-1}]}{J[g^{-1}]} \\&\cong \frac{\mathbb{I}_{A_2[g^{-1}]}(fA_2[g^{-1}])}{\mathbb{I}_{A_2[g^{-1}]}(fA_2[g^{-1}])\cap J[g^{-1}]},
\end{align*} 
where the second equality holds due to Proposition \ref{strucM} as $(f)\lhd \kk[x,y][g^{-1}]$ satisfy the conditions of Theorem \ref{McCfnoeth}. Further, due to Proposition \ref{strictc}, we note that $fA_2[g^{-1}]\subsetneqq \mathbb{I}_{A_2[g^{-1}]}(fA_2[g^{-1}])\cap J[g^{-1}]$ and hence 
$$\DD(\mathcal{O}(X)[\overline{g}^{-1}])\cong  \frac{\mathbb{I}_{A_2[g^{-1}]}(fA_2[g^{-1}])}{fA_2[g^{-1}]}\twoheadrightarrow M,$$
with nontrivial kernel, where $\overline{g}$ is the image of $g\in \kk[x,y]$ in $\mathcal{O}(X)$.

We now show that $C' \coloneqq \mathcal{O}(X)[\overline{g}^{-1}] \cong \mathcal{O}(\NX)[G^{-1}]$ where $G \in \mathcal{O}(\NX)$, whence we may conclude that $M$ is isomorphic to a proper factor of a localisation of $A$. Indeed, if $\phi: \NX\to X$ is our bijective normalisation map then, if $Y$ denotes the set of singular points in $X$ and $\widetilde{Y}$ the corresponding points in $\NX$, the restriction of $\phi$ to $X\setminus{Y}$ gives an isomorphism $X\setminus{Y}\cong \NX\setminus{\widetilde{Y}}$. Let $Z = \mathbb{V}(g)\cap X$, which is a finite set of points, and define $\widetilde{Z}\coloneqq\phi^{-1}(Z)$. Then $\phi$ also gives an isomorphism 
$$X\setminus{Z} \cong \NX\setminus{\widetilde{Z}}.$$
As in the proof of Lemma \ref{localisationgood}, we may find $G\in\mathcal{O}(\NX)$ which defines $\widetilde{Z}$. Then
$$C' = \mathcal{O}(X\setminus{Z})\cong \mathcal{O}(\NX\setminus{\widetilde{Z}}) =\mathcal{O}(\NX)[G^{-1}].$$
Hence $$\DD(C') = \DD(\mathcal{O}(\NX)[G^{-1}])= \DD(\mathcal{O}(\NX))[G^{-1}] = A[G^{-1}],$$
by Lemma \ref{LcDiffs}.

Let us now look more closely at $A[G^{-1}]$. We claim that this ring is $\kdim 1$-critical. Indeed, $A[G^{-1}]$ is a localisation of the HNP ring $A$ by \ref{smoothfacts}(b) , and is thus hereditary by Lemma \ref{LHisH}. Since it is not a division ring, $\kdim(A[G^{-1}])=1$ and, by \cite[Corollary 6.2.12]{MR}, any proper factor has $\kdim 0$ as required. Hence $M$ has finite length as a right $A[G^{-1}]$-module. Thus, to show that $M_{D}$ is finitely generated it suffices to show $S_{D}$ is finitely generated for all simple right modules $S$ of $A[G^{-1}] = \DD(C')\supseteq D.$ But simple $A[G^{-1}]$-modules are holonomic so, by Theorem \ref{Bern}, $S_{A}$ is holonomic and thus is finitely generated. (Note that we are using the fact that $\NX$ is nonsingular here.)

Now recall by Theorem \ref{SS} that $D$ and $A$ are Morita equivalent where the progenerator is
$$P \coloneqq \DD(\NX,X) = \{\theta \in A\mid \theta*(\mathcal{O}(\NX))\subseteq \mathcal{O}(X)\}.$$
We show $A[G^{-1}]\otimes_{D} P \cong A[G^{-1}]$ as $A$-modules. We note that $A[G^{-1}]\otimes_{D} P \subseteq A[G^{-1}]$ by Lemma \ref{localisationgood}. We now show the reverse containment. Observe that $P\otimes_{A}P^*$ (resp. $P^*\otimes_{D}P$) is a nonzero two sided ideal of $D$ (resp. $A$), and hence $P\otimes_{A}P^*=D$ (resp. $P^*\otimes_{D}P = A$) by simplicity, $A$ by \ref{smoothfacts}(a) and $D$ by Corollary \ref{noethsimp}. Thus
$$A[G^{-1}] = \left(A[G^{-1}]\otimes_{D} P\right)\otimes_{A}P^* \subseteq A[G^{-1}]\otimes_{A} P^* \subseteq A[G^{-1}],$$
where the last containment follows by Lemma \ref{localisationgood} as $P^* = \DD(X,\NX)$.
Hence,
\begin{equation*}
    \left(A[G^{-1}]\otimes_{D} P\right)\otimes_{A}P^*  =  A[G^{-1}]\otimes_{D} P^* ,
\end{equation*}
and as $P^*$ is a progenerator, applying $-\otimes_{D}P$ to both sides gives $A[G^{-1}]\otimes_{A} P \cong A[G^{-1}]$.

We claim that $S_D\otimes_D P \cong S_A$. By Proposition \ref{GOOD}, we see that $S \cong S'\otimes_{A} A[G^{-1}]$ where $S'$ is a finitely generated simple right $A$-module. Now,
\begin{align*}
    S\otimes_{D}P &= S'\otimes_{A} \left(A[G^{-1}]\otimes_{D} P \right)\\
    &\cong S' \otimes_{A} A[G^{-1}] = S_{A}.
\end{align*}
As $S_{A}$ is finitely generated, and finite generation is preserved by Morita equivalence, $S_{D}$ is finitely generated as required.
\end{proof}
We have shown that if $A_2/J$ is $\kk[y]$-torsionfree where $J$ is a maximal right ideal which contains $fA_2$, then $\Hom_{A_2}(A_2/fA_2,A_2/J)$ is a finitely generated right $D$-module. If $A_2/J$ were instead a $\kk[x]$-torsionfree module, an identical method would work.
\subsection{The Torsion Case}
We must show that $\Hom_{A_2}(A_2/fA_2,A_2/J)$ is a finitely generated right $D$-module for all maximal right ideals $J$ of $A_2$ which strictly contain $fA_2$ and such that $A_2/J$ is $\kk[x]-$ and $\kk[y]$-torsion. We shall see that we may effectively reduce to showing that the right $D$-module:
\begin{equation*}
    \mathcal{M} = \frac{((x,y)A_2:fA_2)}{(x,y)A_2}
\end{equation*}
is finitely generated.
We begin by defining a special type of filtration on a ring.
\begin{Definition}
Let $R = \bigcup_{n\geq 0} R(n)$ be a filtered ring. We say that $R$ is \emph{strongly filtered} if
$$R(i)R(j) = R(i+j)$$
for all $i,j \in \NN$.
\end{Definition}
\begin{Definition}\label{qfilt}
Let $T$ be a module over a ring $R$ with a strong filtration $R = \bigcup R(i)$. We say $T$ is \emph{quasi-filtered} if $T$ may be written as a union of nested finite-dimensional vector spaces $T = \bigcup_{i\in \NN}T(i)$ such that there exists an $a \in \NN$ such that 
$$T(i)R(1)\subseteq T(i+a)$$ for all $i\in \NN$. Further, we say $T$ has \emph{generalised linear growth} if there exist $c\in \RR_+$, $d\in \RR$ such that
$$\dim_{\kk}T(i) \leq ci+d,$$
for all $i\in \NN$.
\end{Definition}
We now prove the following theorem which shows that modules with suitably bounded growth over strongly filtered simple rings will always be finitely generated.
\begin{theorem}\label{glgfg}
Let $A$ be a strongly filtered infinite dimensional simple ring. Then any quasi-filtered right $A$-module with generalised linear growth is finitely generated.
\end{theorem}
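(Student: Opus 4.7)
The strategy is a proof by contradiction, combining a structural observation about cyclic modules over $A$ with the imposed growth condition. The key preliminary is the following \emph{cyclic module lemma}: if $t$ is a nonzero element in any right $A$-module, then $tA$ is infinite-dimensional over $\kk$. Indeed, the set $\mathrm{ann}_A(tA) = \{b \in A : tAb = 0\}$ is a two-sided ideal of $A$. Since $tA \neq 0$, it is proper, so it equals $0$ by simplicity; hence $A$ embeds in $\mathrm{End}_\kk(tA)$, forcing $\dim_\kk tA = \infty$ since $\dim_\kk A = \infty$.

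I would then proceed as follows. First observe that the ascending chain $T(0)A \subseteq T(1)A \subseteq \cdots$ has union $T$, so if it stabilises at some $T(N)A$, then $T = T(N)A$ is finitely generated. Assume for contradiction that $T$ is not finitely generated, so this chain never stabilises. Construct $t_1, t_2, \ldots$ and submodules $M_n = \sum_{i \leq n} t_i A$ inductively, choosing $t_n \in T(k_n) \setminus M_{n-1}$ where $k_n = \min\{k : T(k) \not\subseteq M_{n-1}\}$ is minimal (which exists because $T$ itself is not contained in the finitely generated $M_{n-1}$). By the minimality of $k_n$, we have $T(k_n - 1) \subseteq M_{n-1}$, and the sequence $k_1 \leq k_2 \leq \cdots$ must tend to infinity (otherwise infinitely many $t_n$ would lie in some fixed finite-dimensional $T(K)$, which is impossible). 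Now fix $n$: the image $\bar t_n$ of $t_n$ in $T/M_{n-1}$ is nonzero, so by the cyclic module lemma the cyclic submodule $\bar t_n A$ is infinite-dimensional, and hence $\dim_\kk \bar t_n A(m) \to \infty$ as $m \to \infty$. Under the inclusion $\bar t_n A(m) \hookrightarrow (T(k_n + am) + M_{n-1})/M_{n-1}$ this yields the lower bound
\[
\dim (M_n \cap T(k_n + am)) - \dim (M_{n-1} \cap T(k_n + am)) \geq \dim \bar t_n A(m).
\]
Telescoping over $n$, one obtains a lower bound on $\dim (M_n \cap T(L))$ built from terms each tending to infinity, while the generalised linear growth hypothesis provides the upper bound $\dim (M_n \cap T(L)) \leq \dim T(L) \leq cL + d$. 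Selecting $n$ and $L$ appropriately produces a contradiction.

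The hard part is the final bookkeeping step: one needs to diagonalise the two indices $n$ and $m$ so that the telescoped sum of the unboundedly growing quantities $\dim \bar t_i A(m)$ really does overwhelm the single linear bound $cL+d$. The difficulty is that the rate at which $\dim \bar t_i A(m)$ grows in $m$ is not uniform in $i$, and $k_n$ could itself grow fast, which would give $L = k_n + am$ extra room. The crucial resource for overcoming this is the \emph{strongly filtered} hypothesis: $A(i)A(j) = A(i+j)$ forces $\dim A(n)$ to be strictly increasing (else $A$ would be finite-dimensional), and in fact (using e.g.\ the Small--Stafford--Warfield theorem on algebras of GK dimension one, together with simplicity) $\dim_\kk A(n)$ must have growth strictly faster than linear. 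This superlinear lower bound on the growth of $A$, transported to cyclic modules $\bar t_i A$ via the faithfulness established in the cyclic module lemma, is what allows the telescoping aggregate to eventually violate the linear bound $cL + d$ and furnish the contradiction.
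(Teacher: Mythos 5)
Your skeleton — argue by contradiction, build an ascending chain of submodules $M_n$ generated by elements chosen outside the previous one, compare telescoped dimension counts of $\dim(M_n\cap T(L))$ against the linear upper bound $cL+d$ coming from quasi-filtration and generalised linear growth — is essentially the architecture of the paper's proof (Lemmas \ref{AC} and \ref{upbound}), and your cyclic module lemma is the same faithfulness input the paper uses. But the step you yourself flag as the hard part is exactly where the argument breaks, and the mechanism you propose for it is wrong. You want to get uniform growth rates for the cyclic modules $\bar t_i A$ by showing $\dim_{\kk}A(n)$ grows superlinearly (via Small--Stafford--Warfield and simplicity) and then ``transporting'' this to $\bar t_i A$ via faithfulness. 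Faithfulness of $\bar t_i A$ does not transfer growth: $\dim \bar t A(m) = \dim A(m) - \dim\bigl(A(m)\cap \ann(\bar t)\bigr)$, and $\ann(\bar t)$ is only a right ideal, so simplicity gives no control. Concretely, take $A=A_1$ with the Bernstein filtration (a strongly filtered, simple, infinite-dimensional ring): the cyclic module $\kk[x]\cong A_1/\partial A_1$ is faithful, yet $\dim \kk[x]\cdot A(m)$ grows linearly with slope $1$ while $\dim A(m)$ grows quadratically. So no superlinear bound passes to cyclic modules, and your final bookkeeping step cannot be completed as described. (The SSW-based claim also quietly needs $A(1)$ finite-dimensional, i.e.\ $A$ affine, which is not among the hypotheses; but this is secondary to the transfer error.)

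What fills the gap is much more elementary, and it is the paper's Lemma \ref{lbound}: for \emph{any} nonzero finite-dimensional subspace $U$ of a right $A$-module, the chain $UA(m)$ is \emph{strictly} increasing in $m$ — if $UA(m)=UA(m+1)$ then strong filtration gives $UA(m)A(1)=UA(m)$, so $UA=UA(m)$ is a nonzero finite-dimensional $A$-module, contradicting your own cyclic module lemma — hence $\dim_{\kk}UA(m)\geq m$, a linear lower bound with slope $1$ that is uniform in $U$. With this, your telescoping gives $\dim\bigl(M_n\cap T(k_n+am)\bigr)\geq \sum_{i=1}^{n}\dim \bar t_i A(m)\geq nm$, while the growth hypothesis gives the upper bound $c(k_n+am)+d = cam + (ck_n+d)$; fixing $n>ca$ and letting $m\to\infty$ yields the contradiction (this is exactly the slope comparison in Lemmas \ref{AC} and \ref{upbound}, where the bound $i\leq ca$ caps the length of the chain). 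So the issue is not that you need superlinear growth of $A$ — you need, and can easily get, a slope-$1$ bound uniform over all the cyclic quotients; your proposal is missing this observation and substitutes an incorrect one.
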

Before we prove this theorem, we first prove some auxiliary lemmata.
\begin{lemma}\label{lbound}
Let $A$ be a strongly filtered simple infinite dimensional ring and let $U$ be a nonzero finite dimensional subspace of a right $A$-module. Then
$$\dim_{\kk}UA(m)\geq m \quad \forall m\in \NN.$$
\end{lemma}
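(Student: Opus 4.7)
The plan is to proceed by induction on $m$, using the simplicity and infinite dimensionality of $A$ to force strict containments in the chain $UA(0)\subseteq UA(1)\subseteq UA(2)\subseteq\cdots$. For the base case $m=0$, since $1\in A(0)$ we have $U\subseteq UA(0)$, and $U\neq 0$ gives $\dim_{\kk}UA(0)\geq 1\geq 0$. The inductive step reduces to showing the strict inclusion $UA(m)\subsetneq UA(m+1)$, as this then yields $\dim_{\kk}UA(m+1)\geq \dim_{\kk}UA(m)+1\geq m+1$.

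To establish this strict inclusion, I would argue by contradiction: suppose $UA(m+1)=UA(m)$ for some $m$. The strong filtration hypothesis gives $A(k)A(1)=A(k+1)$, hence $UA(k+1)=UA(k)A(1)$. A routine induction starting from the assumed equality then yields $UA(n)=UA(m)$ for every $n\geq m$, so
\begin{equation*}
UA=\bigcup_{n\geq 0}UA(n)=UA(m)
\end{equation*}
is finite-dimensional over $\kk$.

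Now consider the two-sided annihilator $I\coloneqq \{a\in A\mid UA\cdot a=0\}$. It is a right ideal by construction, and it is a left ideal because $UA$ is a right $A$-submodule: for any $b\in A$ we have $UA\cdot b\subseteq UA$, so if $a\in I$ then $UA\cdot(ba)=(UA\cdot b)\cdot a\subseteq UA\cdot a=0$. Since $UA\neq 0$ we have $1\notin I$, so $I$ is a proper two-sided ideal, and simplicity of $A$ forces $I=0$. Hence right multiplication embeds $A$ into the finite-dimensional algebra $\End_{\kk}(UA)$, contradicting the assumption that $A$ is infinite-dimensional. Therefore $UA(m)\subsetneq UA(m+1)$, completing the induction.

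The main obstacle is the stabilisation-plus-annihilator step: one must be sure that once the chain stabilises it stabilises permanently (which is exactly where the strong filtration identity $A(i)A(j)=A(i+j)$ is used, together with the convention $1\in A(0)$), and then that the resulting finite-dimensional $UA$ genuinely produces a \emph{proper} two-sided ideal to contradict simplicity. Both points are standard given the definitions, so no technical surprises are expected.
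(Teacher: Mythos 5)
Your proof is correct and takes essentially the same route as the paper: the paper likewise derives $UA(m-1)\subsetneqq UA(m)$ by observing that otherwise $UA = UA(m)$ would be a nonzero finite-dimensional module over the simple infinite-dimensional ring $A$, which is impossible, and then sums the strict jumps. The only difference is that you spell out why this is impossible (the annihilator of $UA$ is a proper two-sided ideal, hence zero by simplicity, giving an embedding $A\hookrightarrow \End_{\kk}(UA)$), a step the paper leaves implicit.
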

\begin{proof}
We begin by noting that $UA(m)\supsetneqq UA(m-1)$ for all $m\in \NN$. Indeed, if this were not the case then $UA = UA(m)$ would be a nonzero finite dimensional $A$-module since $A$ is strongly filtered, which is impossible.
Hence
\begin{gather*}
\dim_{\kk}UA(m)\geq \dim_{\kk}UA(m-1)+1 \geq \dots \geq \dim_{\kk}UA(0)+m\geq m. \qedhere
\end{gather*}
\end{proof}
\begin{lemma}\label{AC}
Let $A$ be a strongly filtered simple infinite dimensional ring and assume that $N$ is not a finitely generated right $A$-module. Then we may form a infinite ascending chain of finite dimensional subspaces $\{U_i\}_{i\geq 1}$ of $N$ such that
$\dim_{\kk}U_{i}A(m) \geq im$ for all $i,m\geq 1$.
\end{lemma}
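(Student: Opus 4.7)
The plan is to build the chain inductively, using Lemma \ref{lbound} at each step on a suitable quotient to extract the required $m$ new dimensions when we enlarge $U_i$. For the base case, pick any nonzero $n_1\in N$ and set $U_1=\kk n_1$; then Lemma \ref{lbound} immediately gives $\dim_{\kk}U_1A(m)\geq m$.

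For the inductive step, suppose $U_i\subseteq N$ is a finite-dimensional subspace with $\dim_{\kk}U_iA(m)\geq im$ for all $m\geq 1$. Since $U_i$ is finite-dimensional and $N$ is not finitely generated, we have $U_iA\subsetneq N$, so we can choose some $n_{i+1}\in N\setminus U_iA$ and set $U_{i+1}=U_i+\kk n_{i+1}$. The key inequality is
\[
\dim_{\kk}U_{i+1}A(m)\;\geq\;\dim_{\kk}U_iA(m)+\dim_{\kk}\frac{n_{i+1}A(m)+U_iA(m)}{U_iA(m)}.
\]
Since $U_iA(m)\subseteq U_iA$, enlarging the denominator can only shrink the quotient, so the second term is bounded below by
\[
\dim_{\kk}\frac{n_{i+1}A(m)+U_iA}{U_iA}.
\]

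To estimate this, pass to the right $A$-module $\overline{N}=N/U_iA$; the coset $\overline{n}_{i+1}$ is nonzero by our choice of $n_{i+1}$. Applying Lemma \ref{lbound} to the one-dimensional subspace $\kk\overline{n}_{i+1}\subseteq \overline{N}$ yields
\[
\dim_{\kk}\overline{n}_{i+1}A(m)\;\geq\;m,
\]
and $\overline{n}_{i+1}A(m)$ is exactly $(n_{i+1}A(m)+U_iA)/U_iA$. Combining these inequalities gives $\dim_{\kk}U_{i+1}A(m)\geq im+m=(i+1)m$, completing the induction.

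There is no serious obstacle here; the only subtlety is being careful about the difference between quotienting $n_{i+1}A(m)$ by $n_{i+1}A(m)\cap U_iA(m)$ versus by $n_{i+1}A(m)\cap U_iA$, which is resolved by the simple monotonicity observation above. The hypotheses that $A$ is simple, strongly filtered, and infinite-dimensional enter only through Lemma \ref{lbound}, which we reuse both on $N$ itself (base case) and on the nonzero quotient module $N/U_iA$ (inductive step); the non-finite-generation of $N$ is what guarantees that such a quotient is always nonzero.
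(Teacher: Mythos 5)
Your proof is correct and follows essentially the same route as the paper: induct on $i$, use the non-finite-generation of $N$ to pick an element (or subspace) outside $U_iA$, pass to the quotient module $N/U_iA$, apply Lemma \ref{lbound} there, and add the dimension counts. The only difference is that you spell out the monotonicity step justifying $\dim_{\kk}U_{i+1}A(m)\geq\dim_{\kk}U_iA(m)+\dim_{\kk}\overline{n}_{i+1}A(m)$, which the paper leaves implicit.
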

\begin{proof}
We proceed by induction on $i\geq 1$, noting that the preceding lemma gives the base case $i=1$. Now we assume that there exists a finite dimensional $U_i$ such that $\dim_{\kk}U_iA(m)\geq im$ for all $m\in \NN$. Now, since $N$ is not finitely generated, $N\neq U_iA$, so there exists a finite dimensional $U_{i+1}\supsetneq U_i$ with $U_{i+1}\nsubseteq U_iA$. Let $\overline{U_{i+1}}$ denote the image of $U_{i+1}$ in $N/U_iA$. Consider the factor module $$\overline{U_{i+1}A} \coloneqq U_{i+1}A/U_iA = \overline{U_{i+1}}A.$$
Then
\begin{align*}
    \dim_{\kk}U_{i+1}A(m)& \geq \dim_{\kk}U_{i}A(m)+\dim_{\kk}\overline{U_{i+1}}A(m) \\
    &\geq im+m
\end{align*}
where the last inequality follows from Lemma \ref{lbound}.
\end{proof}
\begin{lemma}\label{upbound}
Let $A$ be a strongly filtered simple infinite dimensional ring and let $N$ be a quasi-filtered right $A$-module with generalised linear growth. Then any strictly ascending chain of finite-dimensional subspaces $\{U_i\}_{i\in \NN}$ \ such that $\dim_{\kk}U_iA(m)\geq im$ for all $i,m\in \NN$ is finite.
\end{lemma}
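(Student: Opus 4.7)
The plan is to derive a contradiction from the assumption that the chain $\{U_i\}_{i\in\NN}$ is infinite. Let $N = \bigcup_{j\in\NN} T(j)$ be a quasi-filtration witnessing the hypothesis, with associated shift $a\in\NN$ and growth constants $c\in\RR_+$, $d\in\RR$, so that $T(j)A(1)\subseteq T(j+a)$ and $\dim_{\kk} T(j)\leq cj+d$ for all $j$.

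The first step would be to iterate the quasi-filtered inclusion in order to control $T(j)A(m)$ for every $m$. Because $A$ is strongly filtered, $A(m)=A(1)^m$, so a straightforward induction on $m$ yields
$$T(j)A(m) \subseteq T(j+ma) \quad \text{for all } j,m \in \NN.$$
Next, since the chain is assumed infinite, I can pick an index $i$ with $i > ca$. The subspace $U_i$ is finite-dimensional and lies in $N=\bigcup_j T(j)$, so there exists $N_i \in \NN$ with $U_i \subseteq T(N_i)$. Combining the previous containment with the generalised linear growth bound gives
$$\dim_{\kk} U_i A(m) \;\leq\; \dim_{\kk} T(N_i + ma) \;\leq\; c(N_i + ma) + d.$$

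Finally, the hypothesis $\dim_{\kk} U_i A(m) \geq im$ forces $im \leq cN_i + cam + d$ for every $m\in\NN$, which rearranges to
$$(i - ca)\,m \;\leq\; cN_i + d \quad \text{for all } m \in \NN.$$
Since $i-ca > 0$ and the right-hand side is a constant (depending only on the fixed $i$), this is violated once $m$ is large enough, a contradiction. Hence the chain must be finite. The argument is essentially bookkeeping; the only real content is the iteration $T(j)A(m)\subseteq T(j+ma)$, which is the step that genuinely uses both the strongly filtered hypothesis on $A$ and the quasi-filtered hypothesis on $N$, and it is there that one should pay attention to ensure the two definitions mesh correctly.
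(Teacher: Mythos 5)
Your proposal is correct and follows essentially the same route as the paper: bound each $U_i$ inside a filtered piece of $N$, iterate $T(j)A(1)\subseteq T(j+a)$ together with $A(m)=A(1)^m$ to get $U_iA(m)\subseteq T(u_i+ma)$, and compare the lower bound $im$ with the linear growth bound. The only cosmetic difference is that you argue by contradiction with a fixed $i>ca$, whereas the paper directly deduces $i\leq ca$ for every index in the chain.
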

\begin{proof}
Note that for the $U_i$ as in the statement of the lemma there exists a $u_i\in \NN$ such that $U_i\subseteq N(u_i)$, since they are finite dimensional. Further, for all $m\in \NN$,
$$U_iA(m) \subseteq N(u_i)A(m) = N(u_i)A(1)^m\subseteq N(u_i+ma)$$
as $N$ is quasi-filtered, which implies that 
$$im \leq \dim_{\kk}(U_iA(m))\leq \dim_{\kk}N(u_i+ma) \leq c(u_i+ma)+d,$$
and hence $i\leq ca$.
\end{proof}
\begin{proof}[Proof of Theorem \ref{glgfg}]
Assume for contradiction that $N_{A}$ is a quasi-filtered right module with generalised linear growth which is not finitely generated. Then by Lemma \ref{AC} we may form an ascending chain of subspaces $\{U_i\}_{i\in\NN}$ such that $\dim_{\kk}U_iA(m)\geq im$ for all $i,m\geq 1$. But this contradicts Lemma \ref{upbound}. 
\end{proof}
Recall Notation \ref{NOT}. We wish to apply Theorem \ref{glgfg} to the $D = \DD(X)$-module $$\mathcal{M} = \frac{((x,y)A_2:fA_2)}{(x,y)A_2}.$$

We now show that in our circumstances we have two filtrations on $D = \DD(X)$ which interact well together. For the first filtration, recall the Bernstein filtration $\mathcal{B} = \bigcup_{i\geq 0}\mathcal{B}_i$ on the second Weyl algebra $A_2$ defined by 
$$\mathcal{B}_n = \Span_{\kk}\{x^iy^j\dx^k\dy^{\ell}\mid i+j+k+\ell\leq n\}.$$
That is to say, $\mathcal{B}_n$ is the $\kk$-subspace of $A_2$ spanned by monomials in $x,y,\dx,\dy$ of degree $\leq n$. Recall also, by Proposition \ref{idealizer-diffops}, we have that $D \cong\If/fA_2$, and we use this to define a filtration on $D$. First we view $\If$ as a subring of $A_2$ with the Bernstein filtration $\mathcal{B}$ and define a filtration on $\If$ by setting $\mathcal{G}_m = \If\cap \mathcal{B}_m$. Then we define a filtration on the ring $\If/fA_2$ by 
$$\mathcal{F}_m \coloneqq \left(\If/fA_2\right)_m = \frac{\mathcal{G}_m+fA_2}{fA_2}.$$
By abuse of notation, we will refer to this as the \emph{Bernstein filtration on $D$.} We use subscripts here to clearly distinguish which filtration on $D$ we are using.

For the second filtration, by Corollary \ref{noethsimp}, $D$ has a strong filtration which we denote
$$D(m) = V^m$$
where $V$ is a finite generating set for $D$ containing $1$.

We claim these two filtrations interact well together, which we formalise in the following proposition.
\begin{lemma}\label{goodstrongfilt}
Recall Notation \ref{NOT}. Consider the Bernstein filtration on $D$ and let $M$ be a filtered $D$-module with respect to the Bernstein filtration. Choose a finite generating set $V$ for $D$ containing $1$ and define a strong filtration on $D$ by $D(m) = V^m$. Then $M$ is quasi-filtered in the sense of Definition \ref{qfilt}.
\end{lemma}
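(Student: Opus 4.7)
The plan is to directly verify the defining condition of quasi-filtered from Definition \ref{qfilt} by exhibiting the constant $a$ that relates the two filtrations on $D$. First, I would unpack the hypothesis that $M$ is a filtered $D$-module with respect to the Bernstein filtration $\{\mathcal{F}_j\}$: by definition, this means $M = \bigcup_{i\geq 0} M(i)$ is written as a union of nested finite-dimensional $\kk$-subspaces satisfying $M(i)\mathcal{F}_j \subseteq M(i+j)$ for all $i,j \in \NN$.

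The key observation is that since $V$ is a \emph{finite} subset of $D = \bigcup_{m\geq 0} \mathcal{F}_m$ and the $\{\mathcal{F}_m\}$ form an ascending chain exhausting $D$, there exists some $a \in \NN$ with $V \subseteq \mathcal{F}_a$; concretely, one may take $a$ to be the maximum Bernstein degree among the finitely many elements of $V$. In particular $D(1) = V \subseteq \mathcal{F}_a$.

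Combining these two ingredients, for every $i\in \NN$,
$$M(i)\, D(1) \;=\; M(i)\,V \;\subseteq\; M(i)\,\mathcal{F}_a \;\subseteq\; M(i+a),$$
which is precisely the condition demanded by Definition \ref{qfilt}. I do not foresee any real obstacle here: the lemma is essentially a routine compatibility check between the Bernstein filtration and the strong filtration on $D$, and it boils down to the finite-dimensionality of the generating set $V$ together with the module-filtration axiom. The genuine work of the torsion case lies not in this lemma but in producing a natural Bernstein filtration on the specific module $\mathcal{M} = ((x,y)A_2:fA_2)/(x,y)A_2$ and verifying that it has generalised linear growth, so that Theorem \ref{glgfg} can then be applied.
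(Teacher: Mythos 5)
Your proof is correct and follows essentially the same route as the paper: both arguments observe that the finite (hence finite-dimensional) set $D(1)=V$ lies in some Bernstein piece $\mathcal{F}_a$, and then conclude $M(i)D(1)\subseteq M(i)\mathcal{F}_a\subseteq M(i+a)$ from the module-filtration axiom. Nothing is missing.
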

\begin{proof}
Note that each filtered piece of $D$ under the strong filtration is finite dimensional, hence there exists $a\in\NN$ such that
$$D(1)\subseteq \mathcal{F}_{a}.$$
Hence
$$M(i)D(1)\subseteq M(i)\mathcal{F}_{a}\subseteq M(i+a),$$
as required.
\end{proof}
\begin{proposition}\label{torsionproved}
Let $\gamma,\delta \in \kk$. The right $D$-module
$$\mathcal{M} = \frac{((x-\gamma,y-\delta)A_2:IA_2)}{(x-\gamma,y-\delta)A_2}$$
is finitely generated.
\end{proposition}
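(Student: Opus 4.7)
The plan is to apply Theorem \ref{glgfg} to the right $D$-module $\mathcal{M}$. Define a filtration on $\mathcal{M}$ inherited from the Bernstein filtration $\mathcal{B}$ on $A_2$, by
$$\mathcal{M}(n) = \frac{\bigl(((x-\gamma,y-\delta)A_2 : fA_2)\cap \mathcal{B}_n\bigr) + (x-\gamma,y-\delta)A_2}{(x-\gamma,y-\delta)A_2}.$$
Multiplicativity of $\mathcal{B}$, combined with the idealizer condition $\phi f A_2\subseteq fA_2$ for $\phi \in \If$, readily gives $\mathcal{M}(n)\cdot \mathcal{F}_m \subseteq \mathcal{M}(n+m)$, so $\mathcal{M}$ becomes a filtered right $D$-module with respect to the Bernstein filtration $\mathcal{F}$ on $D$. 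By Lemma \ref{goodstrongfilt} it is then quasi-filtered with respect to the strong filtration on $D$, and $D$ is a simple infinite-dimensional $\kk$-algebra by Corollary \ref{noethsimp}. Hence by Theorem \ref{glgfg} it suffices to prove that $\mathcal{M}$ has generalised linear growth.

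By an affine translation I may assume $(\gamma,\delta)=(0,0)$ and write $I = (x,y)$. By Proposition \ref{homsquotient}, $\mathcal{M}\cong \Hom_{A_2}(A_2/fA_2, N)$ where $N = A_2/IA_2$. Using $\dx^a x = x\dx^a + a\dx^{a-1}$ and the analogous relation for $\dy, y$, one checks that $N$ has $\kk$-basis $\{\overline{\dx^a\dy^b}\}_{a,b\geq 0}$ with right action $\overline{\dx^a\dy^b}\cdot x = a\,\overline{\dx^{a-1}\dy^b}$ and similarly for $y$. Identifying $N$ with the polynomial ring $\kk[u,v]$ via $u\leftrightarrow \dx$, $v\leftrightarrow \dy$, right multiplication by $f(x,y)$ becomes the constant-coefficient differential operator $f(\partial_u,\partial_v)$ acting on $\kk[u,v]$. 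Since $\Hom_{A_2}(A_2/fA_2,N) \cong \{n\in N: n\cdot f = 0\}$, we obtain
$$\mathcal{M}\cong \ker f(\partial_u,\partial_v) \subseteq \kk[u,v],$$
and under this identification $\mathcal{M}(n)$ corresponds to the intersection of this kernel with the space of polynomials of total degree $\leq n$ in $u, v$.

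If $f(0,0)\neq 0$, then $f(\partial)$ has nonzero constant term and is injective on $\kk[u,v]$ by a leading-degree argument, so $\mathcal{M}=0$. Otherwise, write $f = f_{d_0}+\cdots+f_d$ with each $f_k$ homogeneous of degree $k$ and $1\leq d_0 \leq d$ the smallest and largest degrees appearing; each $f_k(\partial)$ then lowers total degree by exactly $k$. It is a classical fact (apolarity in characteristic zero) that for a nonzero homogeneous polynomial $g$ of degree $k$ in two variables, the operator $g(\partial_u,\partial_v)\colon \Sym^n\kk^2\to \Sym^{n-k}\kk^2$ is surjective for $n\geq k$; applied to $g = f_{d_0}$ this gives $\dim_{\kk}\ker f_{d_0}(\partial)|_{\Sym^n\kk^2} = d_0$ for $n\geq d_0$. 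For any $p\in \ker f(\partial)$ with top-degree homogeneous component $p_n\in \Sym^n\kk^2$, comparing the degree-$(n-d_0)$ parts in $f(\partial) p = 0$ forces $f_{d_0}(\partial) p_n = 0$. Consequently the associated graded of the total-degree filtration on $\ker f(\partial)$ injects degree-by-degree into $\bigoplus_n \ker f_{d_0}(\partial)|_{\Sym^n\kk^2}$, and summing yields $\dim_\kk \mathcal{M}(n)\leq d_0\, n + O(1)$, the required generalised linear growth.

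The main obstacle is the linear-growth estimate. The ambient module $A_2/IA_2$ has Bernstein filtration pieces of dimension $\binom{n+2}{2}$ (quadratic growth), so the condition ``$\theta f \in IA_2$'' must produce substantial cancellation to reduce this to linear growth. Recasting this condition as a polynomial-kernel problem $f(\partial_u,\partial_v)\, p = 0$ on $\kk[u,v]$, and then invoking the classical surjectivity of the leading homogeneous symbol on symmetric powers of $\kk^2$, is the crucial reduction; the remainder is formal, using Theorem \ref{glgfg} and Lemma \ref{goodstrongfilt}.
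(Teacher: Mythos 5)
Your proof is correct, and while the outer skeleton is the same as the paper's (translate to the origin, put the Bernstein-induced filtration on $\mathcal{M}$, check $\mathcal{M}(n)\cdot\mathcal{F}_m\subseteq\mathcal{M}(n+m)$, invoke Lemma \ref{goodstrongfilt} and Theorem \ref{glgfg}, so that everything reduces to generalised linear growth), you establish the growth estimate by a genuinely different route. The paper works with the constraints $\Phi_{ij}=(\theta * fx^iy^j)(0,0)$ and shows, via a deg-lex leading-term argument, that the constraints indexed by $\Delta_{n-N}$ are linearly independent, giving $\dim_\kk\mathcal{M}(n)\leq|\Delta_n|-|\Delta_{n-N}|\sim Nn$ with $N=\deg f$. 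You instead identify $\mathcal{M}\cong\{\bar\theta\in A_2/(x,y)A_2 : \bar\theta\cdot f=0\}$ with the kernel of the constant-coefficient operator $f(\partial_u,\partial_v)$ on $\kk[u,v]$ (the Macaulay inverse system of $(f)$ at the point), note that the filtration matches total degree, and bound the kernel degreewise by $\ker f_{d_0}(\partial)$ on $\Sym^n\kk^2$, where $f_{d_0}$ is the lowest homogeneous part; the classical apolarity fact that $g(\partial)\colon\Sym^n\kk^2\to\Sym^{n-k}\kk^2$ is surjective in characteristic zero (in two variables one can even see this by factoring $g$ into linear forms, each a surjective directional derivative) then gives $\dim_\kk\mathcal{M}(n)\leq d_0 n+O(1)$. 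The two arguments are essentially dual to each other --- the paper's $\Phi_{ij}$ are exactly the linear functionals cutting out $\ker f(\partial)$ --- but yours packages the count through a standard structural fact rather than an explicit leading-term computation, and it yields the sharper constant $d_0$ (the multiplicity of the curve at the point) in place of $\deg f$; it also isolates cleanly the trivial case $f(\gamma,\delta)\neq 0$, where $\mathcal{M}=0$. The steps you leave as ``readily checked'' (compatibility of the filtration with $\mathcal{F}$, and that $\mathcal{M}(n)$ corresponds to kernel elements of degree at most $n$) are indeed routine, so I see no gap.
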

\begin{proof}
We first observe that for any point $(\gamma, \delta) \in \mathbb{A}^2$, we have the following isomorphism:
$$\frac{((x-\gamma,y-\delta)A_2:fA_2)}{(x-\gamma,y-\delta)A_2} \cong \frac{((x',y')A_2:\hat{f}A_2)}{(x',y')A_2},$$
where $\hat{f}\in \kk[x,y]$ defines a curve with the same properties as $X$. Hence without loss of generality it suffices to prove that $\mathcal{M}$ is finitely generated for $\gamma = \delta = 0$.

We use Theorem \ref{glgfg}. We define a filtration on $\mathcal{M}$ by first denoting $$\Gamma_n \coloneqq ((x,y)A_2:fA_2)\cap \mathcal{B}_n$$ and then constructing
$$\mathcal{M}(n) \coloneqq \frac{\Gamma_n+(x,y)A_2}{(x,y)A_2}.$$
We again abuse notation and refer to this as the \emph{Bernstein filtration on $\mathcal{M}$}. Now we note that $\Gamma_n \cdot \mathcal{G}_m \subseteq \Gamma_{n+m}$ by definition. Then
\begin{align*}
\mathcal{M}(n)\cdot \mathcal{F}_m & = \frac{\Gamma_n+(x,y)A_2}{(x,y)A_2} \cdot \frac{\mathcal{G}_m+fA_2}{fA_2} \\
& \subseteq \frac{\Gamma_n\cdot \mathcal{G}_m+(x,y)A_2}{ (x,y)A_2}\\
& \subseteq \frac{\Gamma_{n+m}+(x,y)A_2}{ (x,y)A_2} = \mathcal{M}(n+m).
\end{align*}
So $\mathcal{M}$ is a filtered $D$-module when $D$ has the induced Bernstein filtration. Now we show $\mathcal{M}$ has generalised linear growth.

We will now use some standard linear algebra techniques related to linear independence of equations to bound the dimension of $\mathcal{M}(n)$ where $n\gg0$. We begin by describing the important sets we will use. Firstly, given a polynomial $f = \sum_{\alpha,\beta}f_{\alpha \beta}x^{\alpha}y^{\beta} \in \kk[x,y]$ we define the set
$$\supp(f) \coloneqq \{(i,j) \mid f_{ij}\neq 0\}\subseteq \NN^2.$$
Then we note
$$\supp(fx^ay^b) = \supp(f)+(a,b).$$
Write $A_2/(x,y)A_2 = \kk[\dx,\dy]$, which we filter by setting $\deg\dx = \deg\dy = 1$; this induces the Bernstein filtration on $\mathcal{M} \subseteq A_2/(x,y)A_2$. That is to say, we view the filtered pieces of $\mathcal{M}$ as: 
$$\mathcal{M}(n) =\{\theta \in \kk[\dx,\dy]_{\leq n}\mid \theta * fx^iy^j\in (x,y)\quad \forall i,j \in \NN\}.$$
We also define 
$$\Delta_n = \{(i,j)\in \mathbb{N}^2\mid i+j\leq n\},$$
Let $N = \max_{\text{totaldeg}}(f)$. Then, if $n> N$, we see
$$\Delta_{n-N} = \{(i,j)\in \NN^2 \mid \supp(f)+(i,j)\subseteq \Delta_n\}.$$
Fix $n>N$. We first claim that $$\dim_{\kk}\mathcal{M}(n)\leq |\Delta_n|-|\Delta_{n-N}|.$$
Put a deg-lex ordering on pairs $(i,j)\in \NN^2$, denoted $\prec$, where $(a,b)\prec (c,d)$ if $a+b<c+d$ or, if there is equality of total degree, and $(a,b)$ is lexicographically less than $(c,d)$. Note that
\begin{equation}\label{deglex}
    (a,b)\prec (c,d) \iff (a+i,b+j)\prec (c+i,d+j).
\end{equation}
Now we show how to give an upper bound for $\dim_{\kk} \mathcal{M}(n)$. 

Let $\theta = \sum_{0\leq p+q\leq n}a_{pq}\dx^p\dy^q $ be a general element of $\kk[\dx,\dy]_{\leq n}$, where the $a_{pq}$ are indeterminates. Also let 
\begin{equation}\label{lambda}
\Phi_{ij} \coloneqq \left(\theta * fx^iy^j\right) (0,0) =  \sum_{(\alpha,\beta)\in \supp f}a_{\alpha+i,\beta+j}(\alpha+i)!(\beta+j)!f_{\alpha \beta},
\end{equation}
which is a linear combination of the variables $a_{pq}.$ Then $\theta\in \mathcal{M}(n)$ if and only if $\Phi_{ij} = 0$ for all $i,j \in \NN$. When $(i,j)\in \Delta_{n-N}$, $\Phi_{ij}$ is nonzero as an element of the set $\Span_{\kk}\{a_{pq}\mid (p,q) \in \Delta_n\}$.

We extend the monomial ordering on $\Delta_n$ to the $a_{pq}$ by defining $a_{\alpha \beta}\prec a_{\alpha'\beta'}$ if and only if $(\alpha, \beta)\prec (\alpha', \beta')$. Further, we denote the element of $\NN^2$ associated to the leading term of $\Phi_{ij}$ by $\overline{\Phi_{ij}}$, so $\overline{\Phi_{i j}} \coloneqq (k,\ell)$ where $a_{k\ell} = \lt_{\prec}(\Phi_{ij})$.
Then we note that 
$$\overline{\Phi_{ij}} = (i,j)+\overline{\Phi_{00}},$$
by \eqref{lambda}.

Let $\{\beta_{ij}\}_{0\leq i+j \leq n}\in \kk$ be not all zero and let 
$$M = \max_{\prec}\{(i,j)\mid \beta_{ij}\neq 0\} = (i_M,j_M).$$
If $(i,j)\prec M$, then $\overline{\Phi_{ij}}\prec M+\overline{\Phi_{00}}$ by \eqref{deglex}. Consider the linear combination of the $\Phi_{ij}$, $$\Phi \coloneqq \sum_{0\leq i+j\leq n}\beta_{ij}\Phi_{ij},$$
viewed as a linear expression in the $a_{pq}$. Then $\overline{\Phi} = \overline{\Phi_{i_Mj_M}} = M+\overline{\Phi_{00}}$ and the leading coefficient of $\Phi$ is $\lambda \beta_{i_Mj_M}$ where $\lambda$ is a nonzero coefficient coming from \eqref{lambda}. Hence, $\Phi =0$ implies $\beta_{i_Mj_M}=0$, a contradiction.

Thus $\{\Phi_{ij}\mid (i,j)\in \Delta_{n-N}\}$ are linearly independent and the elements of $\Delta_{n-N}$ give linearly independent constraints on $\mathcal{M}(n)$.
Hence 
$$\dim_{\kk}\mathcal{M}(n)\leq |\Delta_n|-|\Delta_{n-N}|,$$
as claimed.

We now observe that 
$$\dim_{\kk}\mathcal{M}(n)\leq |\Delta_n|-|\Delta_{n-N}| = \frac{n(n+1)}{2}-\frac{(n-N)(n-N+1)}{2}$$
which is linear in $n$, and hence $\mathcal{M}$ has generalised linear growth.

Put a strong filtration $D = \bigcup_{n\geq 0}D(n)$ on $D$ as described in Lemma \ref{goodstrongfilt}. By Lemma \ref{goodstrongfilt}, there exists an $a\in \NN$ such that $$\mathcal{M}(n)D(1)\subseteq \mathcal{M}(n+a)$$
for all $n\geq 0$. Thus, by Theorem \ref{glgfg}, $\mathcal{M}_D$ is finitely generated.
\end{proof}
We now complete the proof of the right noetherianity of $\If$. The following auxiliary lemma aids us. 
\begin{lemma}\label{compS}
Let $S$ be a simple $\kk[x]$-torsion right $A_1$-module, then 
$$S \cong A_1/(x-a)A_1$$
for some $a \in \kk.$
\end{lemma}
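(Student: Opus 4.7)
The plan is to reduce, using the $\kk[x]$-torsion hypothesis together with the algebraic closedness of $\kk$, to finding an element of $S$ annihilated by some linear polynomial $(x-a)$, and then to check that $(x-a)A_1$ is a maximal right ideal of $A_1$.

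First I would pick any nonzero $s \in S$. The $\kk[x]$-torsion hypothesis forces $\ann_{\kk[x]}(s)$ to be a nonzero ideal of $\kk[x]$; since $\kk$ is algebraically closed it is generated by some
\[
q(x) = \prod_{i=1}^k (x-a_i)^{m_i}, \qquad a_i\in\kk,\ m_i\geq 1.
\]
Setting $q_1(x) = q(x)/(x-a_1)$ and $t = s\cdot q_1$, a degree comparison gives $q_1 \notin (q)$, so $t\neq 0$, while $t(x-a_1) = s\cdot q = 0$.

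By simplicity, $tA_1 = S$, hence $S \cong A_1/\ann_{A_1}(t)$ as right $A_1$-modules, with $(x-a_1)A_1 \subseteq \ann_{A_1}(t) \subsetneq A_1$. It therefore suffices to show that $(x-a_1)A_1$ is a maximal right ideal of $A_1$. Applying the automorphism $x \mapsto x+a_1$ of $A_1$, we may assume $a_1 = 0$ and must show $A_1/xA_1$ is a simple right $A_1$-module.

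Writing $\partial = d/dx$ and using the identity $\partial^j x = x\partial^j + j\partial^{j-1}$, one checks directly that $\{\overline{\partial^j}\}_{j\geq 0}$ is a $\kk$-basis of $A_1/xA_1$ with right action given by $\overline{\partial^j}\cdot \partial = \overline{\partial^{j+1}}$ and $\overline{\partial^j}\cdot x = j\,\overline{\partial^{j-1}}$. Given a nonzero submodule $N \leq A_1/xA_1$, pick $0\neq v = \sum_{j=0}^n c_j\overline{\partial^j} \in N$ with $c_n\neq 0$; then $v\cdot x^n = n!\,c_n\,\overline{1}$ lies in $N$, so $\overline{1}\in N$, and acting by powers of $\partial$ recovers every basis element, giving $N = A_1/xA_1$. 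This establishes maximality and finishes the argument. The only non-formal ingredient is this short direct computation of simplicity of $A_1/xA_1$; the rest of the proof is bookkeeping built on the PID structure of $\kk[x]$.
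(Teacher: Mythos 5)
Your proof is correct, but it takes a different route from the paper's. The paper writes $S \cong A_1/K$ with $K$ a maximal right ideal containing a polynomial $p(x)$, factors $p$ into linear factors, exhibits the explicit chain $pA_1 \subseteq p_2A_1 \subseteq \dots \subseteq p_nA_1 \subseteq A_1$ with successive quotients $A_1/(x-a_i)A_1$ (whose simplicity it takes as known), and then invokes the Jordan--H\"older theorem: since $A_1/pA_1$ surjects onto the simple module $A_1/K$, the latter must be one of these composition factors. You instead work inside $S$ itself: you peel off a single linear factor of the generator of $\ann_{\kk[x]}(s)$ to produce $t\neq 0$ with $t\cdot(x-a_1)=0$, so that $S \cong A_1/\ann_{A_1}(t)$ with $(x-a_1)A_1 \subseteq \ann_{A_1}(t)$, and then you prove directly (via the basis $\{\overline{\partial^j}\}$ of $A_1/xA_1$ and the relation $\partial^j x = x\partial^j + j\partial^{j-1}$, using characteristic $0$) that $(x-a)A_1$ is a maximal right ideal, forcing equality. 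Your argument avoids composition series and Jordan--H\"older entirely and is more self-contained, since it supplies the proof of simplicity of $A_1/(x-a)A_1$ that the paper only asserts; the paper's argument is shorter granted that standard fact, and its composition-series bookkeeping is the same device it needs anyway to compare $A_1/K$ with $A_1/pA_1$. All the steps you give check out, including the degree argument showing $t\neq 0$ and the computation $v\cdot x^n = n!\,c_n\,\overline{1}$.
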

\begin{proof}
As $S$ is a simple module with $\kk[x]$-torsion, that implies that $S \cong A_1/K$ where $K$ is a maximal right ideal of $A_1$ which contains a polynomial $p$ in $x$ (the annihilator of $1$). Let us first consider a composition series for the right module $A_1/pA_1$.

Since $\kk$ is algebraically closed, we may factorise $p$ into linear factors:
$$ p = \prod_{i=1}^n(x-a_i)$$
and denote
$$p_j = \prod_{i=j}^n(x-a_i),$$
for $j = 1,\dots,n$.

Observe that
$$p_jA_1/p_{j+1}A_1 \cong A_1/(x-a_{j+1})A_1,$$
which is a simple right $A_1$-module. Hence the following induces a composition series for $A_1/pA_1$:
$$
pA_1\subseteq p_2A_1\subseteq \dots \subseteq p_nA_1\subseteq A_1.$$
We finish the proof by noting that, as $pA_1\subseteq K$,
$$A_1/pA_1\twoheadrightarrow A_1/K,$$
and hence, by the Jordan-H\"older theorem, $A_1/K$ must be isomorphic to a composition factor of $A_1/pA_1$. That is to say
$$S \cong A_1/K \cong A_1/(x-a)A_1$$
for some $a\in A_1$.
\end{proof}
\begin{comment}
\begin{lemma}\label{centraliser}
Let $f \in \kk[x,y]$ be a nonzero polynomial, then
$$C(f;A_2) = \{g \in A_2 \mid [f,g] = 0\} = \kk[x,y].$$
\end{lemma}
\begin{proof}
Clearly $\kk[x,y] \subseteq C(f;A_2)$. Conversely, let $X = \sum_{i,j \leq n,m}a_{ij}\dx^i\dy^j \in A_2$ where $a_{ij}\in \kk[x,y]$ and $a_{n,m}\neq 0$ where we put a degree lexicographic order on the $(i,j)$. Without loss of generality assume $m>0$. Suppose for contradiction that $[f,X] = fX-Xf = 0$. Comparing terms gives that $a_{n,m}m\partial_y(f)=0$, giving $\partial_y(f) =0$ which is a contradiction.
\end{proof}
\end{comment}
\begin{proposition}\label{STPstar} Assume Notation \ref{NOT}. Suppose that $J\leq_{r} A_2$ is maximal and contains $fA_2$, and that $A_2/J$ is $\kk[y]$- and $\kk[x]$-torsion. Then there exist $a,b \in \kk$ such that
$$A_2/J \cong A_2/(x-a,y-b)A_2.$$
\end{proposition}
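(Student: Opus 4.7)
The plan is to locate a nonzero element $m_0 \in M := A_2/J$ annihilated by both $x-a$ and $y-b$ for some $a,b\in\kk$, and then to exploit the simplicity of both $M$ and the ``delta-at-a-point'' module $A_2/(x-a,y-b)A_2$ to upgrade the resulting surjection into an isomorphism.

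First I would use the $\kk[x]$-torsion hypothesis: the generator $m = 1+J$ is annihilated by some nonzero $p(x)\in\kk[x]$, which factors into linear terms over $\kk$ since $\kk$ is algebraically closed. Choose a polynomial $q(x)$ of \emph{minimal} degree with $m\cdot q(x) = 0$; then $q$ has at least one linear factor $(x-a)$, and $m' := m\cdot\bigl(q(x)/(x-a)\bigr)$ is a nonzero element of $M$ satisfying $m'(x-a)=0$. Now $m'$ itself is $\kk[y]$-torsion (by Lemma~\ref{torN} the whole of $M$ is), so running the identical minimal-degree reduction in $y$ produces $b\in\kk$ and a nonzero $m_0 := m'\cdot r(y)$ with $m_0(y-b)=0$. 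Because $y-b$ commutes with $x-a$, multiplying $m'$ on the right by $r(y)$ preserves the relation $m'(x-a)=0$, so $m_0(x-a)=0$ as well. Hence $\rann(m_0)\supseteq (x-a,y-b)A_2$.

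To finish, simplicity of $M$ gives $m_0A_2=M$, so the map $A_2\to M$ sending $c\mapsto m_0c$ factors through a surjection $A_2/(x-a,y-b)A_2 \twoheadrightarrow M$. It then suffices to show the domain is itself simple: after translating by $(a,b)$ one reduces to showing $A_2/(x,y)A_2$ is a simple right $A_2$-module, which is a standard computation---identify $A_2/(x,y)A_2$ with $\kk[\dx,\dy]$, observe that right multiplication by $x$ and $y$ behaves as $\partial_{\dx}$ and $\partial_{\dy}$ respectively, and for any nonzero $n$ pick a componentwise-maximal monomial $\overline{\dx^i\dy^j}$ in its support; then $n\cdot x^iy^j$ is a nonzero scalar multiple of the cyclic generator $\overline{1}$. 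Both sides being simple, the surjection is an isomorphism.

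The main obstacle is really that first paragraph: passing from ``some nonzero polynomial in $x$ annihilates $m$'' to the existence of a nonzero \emph{descendant} of $m$ annihilated by a linear form $x-a$. The minimal-degree argument, which uses algebraic closedness in an essential way, handles this cleanly; the rest is formal (commutativity of $x-a$ and $y-b$ keeps the two reductions independent) together with the classical simplicity of $A_2/(x,y)A_2$.
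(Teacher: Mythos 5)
Your argument is correct, but it follows a genuinely different route from the paper. The paper's proof is structural: it notes that $1+J$ is killed by some $g\in\kk[x]^*$, invokes \cite[Proposition 5.1]{BVO} to decompose $A_2/J \cong S_1\otimes_\kk S_2$ with $S_i$ simple $A_1$-modules, and then applies its Lemma \ref{compS} (a Jordan--H\"older analysis of $A_1/p(x)A_1$) to each tensor factor to get $S_1\cong A_1/(x-a)A_1$ and $S_2\cong A_1/(y-b)A_1$. You instead work entirely inside $A_2/J$: the minimal-degree reduction (using algebraic closedness) produces a nonzero element killed by $x-a$, the same reduction in $y$ — harmless to the first relation because $x-a$ and $\kk[y]$ commute — produces $m_0$ with $\rann(m_0)\supseteq(x-a,y-b)A_2$, and then simplicity of $A_2/J$ gives a surjection $A_2/(x-a,y-b)A_2\twoheadrightarrow A_2/J$, which is an isomorphism once you verify the classical fact that $A_2/(x,y)A_2\cong\kk[\dx,\dy]$ is simple (your componentwise-maximal-monomial computation is the standard and correct proof, and the translation automorphism handles general $(a,b)$). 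Each step checks out, including the subtle points: minimality guarantees $m'\neq 0$ and $m_0\neq 0$, and commutativity of $\kk[x,y]$ keeps the two reductions independent. What the two approaches buy: yours is more elementary and self-contained, needing no external decomposition theorem and bypassing Lemma \ref{compS} altogether; the paper's version is shorter on the page because it outsources the work to the $A_1$-level classification and the tensor-decomposition result of \cite{BVO}, and it exhibits the simple module explicitly as an outer tensor product, which is conceptually aligned with how the paper treats $A_2$ as $A_1\otimes_\kk A_1$ elsewhere.
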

\begin{proof}
Since $A_2/J$ is $\kk[x]$-torsion, this implies that there exists a nonzero polynomial $g \in J \cap \kk[x]$ (the annihilator of the identity). By \cite[Proposition 5.1]{BVO}, $A_2/J \cong S_1\otimes_{\kk}S_2$, where $S_1$ is a simple $\kk[x,\dx]$-module and $S_2$ is a simple $\kk[y,\dy]$-module, and there exists a nonzero $s \in S_1$ such that $sg = 0$. We note, by the same reasoning as Lemma \ref{torN}, $S_1$ is either $\kk[x]$-torsion or $\kk[x]$-torsionfree. As $sg=0$, $S_1$ must be $\kk[x]$-torsion. Applying Lemma \ref{compS}, we may conclude $S_1\cong A_1/(x-a)A_1$ for some $a\in \kk$. Similarly, we may conclude that $S_2 \cong A_1/(y-b)A_1$ and thus $A_2/J \cong S_1\otimes_{\kk} S_2\cong A_2/(x-a,y-b)A_2$.
\end{proof}
We now prove Theorem \ref{fdeets}.
\begin{proof}[Proof of Theorem \ref{fdeets}]
By Proposition \ref{homsequivs} to show that $\If$ is right noetherian we must show that $\Hom_{A_2}(A_2/fA_2,A_2/J)$ is a finitely generated right $D$-module for all maximal right ideals $J\leq_{r}A_2$ which strictly contain $fA_2$.

Lemma \ref{torN} shows that there are two cases:
\begin{enumerate}
    \item $A_2/J$ is $\kk[x]$- or $\kk[y]$-torsionfree; or
    \item $A_2/J$ is $\kk[x]$- and $\kk[y]$-torsion.
\end{enumerate}

Proposition \ref{TFFG} covers case $(1)$. For case $(2)$, Proposition \ref{STPstar} shows that we need only consider right ideals $J$ of the form $(x-a,y-b)A_2$ where $(a,b)\in \kk^2$ and Proposition \ref{torsionproved} shows that 
$$\Hom_{A_2}(A_2/fA_2,A_2/(x-a,y-b)A_2) \cong \frac{((x-a,y-b)A_2:IA_2)}{(x-a,y-b)A_2},$$
is finitely generated as required. Then, by Proposition \ref{LN}, $\If$ is left and right noetherian.
\end{proof}
\bibliographystyle{siam}
\bibliography{bibliography}
\end{document}